\newtheorem{thm}{Theorem}[section]
\newtheorem{lem}[thm]{Lemma}
\newtheorem{mydef}{Definition}[section]
\newtheorem{rem}{Remark}[section]
\theoremstyle{remark}
\newcommand\bb[1]{\mathbf{#1}}
\newcommand{\R}{\mathbb{R}}
\newcommand{\ep}{\varepsilon}
\newcommand{\weak}{\rightharpoonup}
\newcommand{\di}{\, {\rm d}}
\newcommand{\dd}{{\rm d}}
\date{}
\newcommand\bint[1]{\displaystyle\int #1}
\title{On time-periodic solutions to an interaction problem between compressible viscous fluids and viscoelastic beams}
\author{Ond\v rej Kreml, V\' aclav M\' acha, \v S\' arka Ne\v casov\' a, Sr\dj an Trifunovi\' c}
\newcommand{\Addresses}{{% additional braces for segregating \footnotesize
  \bigskip
  \footnotesize

Ond\v rej Kreml, \textsc{Institute of Mathematics of the Academy of Sciences of the Czech Republic}\par\nopagebreak
  \textit{E-mail address}: \texttt{kreml@math.cas.cz}

\medskip

V\'{a}clav M\'{a}cha, \textsc{Institute of Mathematics of the Academy of Sciences of the Czech Republic}\par\nopagebreak
  \textit{E-mail address}: \texttt{macha@math.cas.cz}

\medskip

\v{S}\'{a}rka Ne\v{c}asov\'{a}, \textsc{Institute of Mathematics of the Academy of Sciences of the Czech Republic}\par\nopagebreak
  \textit{E-mail address}: \texttt{matus@math.cas.cz}
  
\medskip

Sr\dj{}an Trifunovi\'{c}, \textsc{Department of Mathematics and Informatics, Faculty of Sciences, University of Novi Sad}\par\nopagebreak
  \textit{E-mail address}: \texttt{srdjan.trifunovic@dmi.uns.ac.rs}

}}
\begin{document}
\maketitle
{\begin{abstract}
    In this paper, we study a nonlinear fluid-structure interaction problem between a viscoelastic beam and a compressible viscous fluid. The beam is immersed in the fluid which fills a two-dimensional rectangular domain with periodic boundary conditions. Under the effect of periodic forces acting on the beam and the fluid, at least one time-periodic weak solution is constructed which has a bounded energy and a fixed prescribed mass.
\end{abstract}}

\noindent
\textbf{Keywords and phrases:} {fluid-structure interaction, compressible viscous fluid, viscoelastic beam, time-periodic solutions}
\\${}$ \\
\textbf{AMS Mathematical Subject classification (2020):} {74F10 (Primary), 	35B10, 76N06 (Secondary)}
%74F10 FSI, 35B10 Periodic solutions to PDEs, 76N06 Compressible Navier-Stokes equations

\section{The model}
Let $L,H,T>0$ and define \begin{equation*}%%%%%%%%%%%%%
    \Gamma:= (0,L), \quad \Omega = (0,L)\times (-H,H).
\end{equation*}%%----------------------------%%
We denote the horizontal variable by $x$ and the vertical variable by $z$. The fluid fills the domain $\Omega$ and it is described with velocity $\bb{u}:(0,T)\times\Omega\to \mathbb{R}^2$ and density $\rho:(0,T)\times\Omega\to \mathbb{R}$ which are periodic in both the $x$ and the $z$ direction. The beam is immersed in the fluid and its vertical displacement is given as $\eta:(0,T)\times\Gamma\to \mathbb{R}$, while its graph is denoted as 
\begin{equation*}%%%%%%%%%%%%%
    \Gamma^\eta(t) := \{ (x,\eta(t,x)) : x\in \Gamma \}.
\end{equation*}%%----------------------------%%
In order to work on a fixed domain $\Omega$ (note that $\eta$ does not necessarily have values in $[-H,H]$), let us define a $z$-periodic version of $\eta$
\begin{equation*}%%%%%%%%%%%%%
    \hat\eta(t,x):=\eta(t,x)-2n(t,x)H, 
\end{equation*}%%----------------------------%%
where $n(t,x)\in \mathbb Z$ is uniquely determined by the requirement $\eta(t,x)-2n(t,x)H\in [-H,H).$
Its graph $\hat{\Gamma}^\eta(t)$ is on Figure \ref{figure1}.
\begin{figure}[ht]
    \centering\makebox[\textwidth]{\includegraphics[width=\columnwidth]{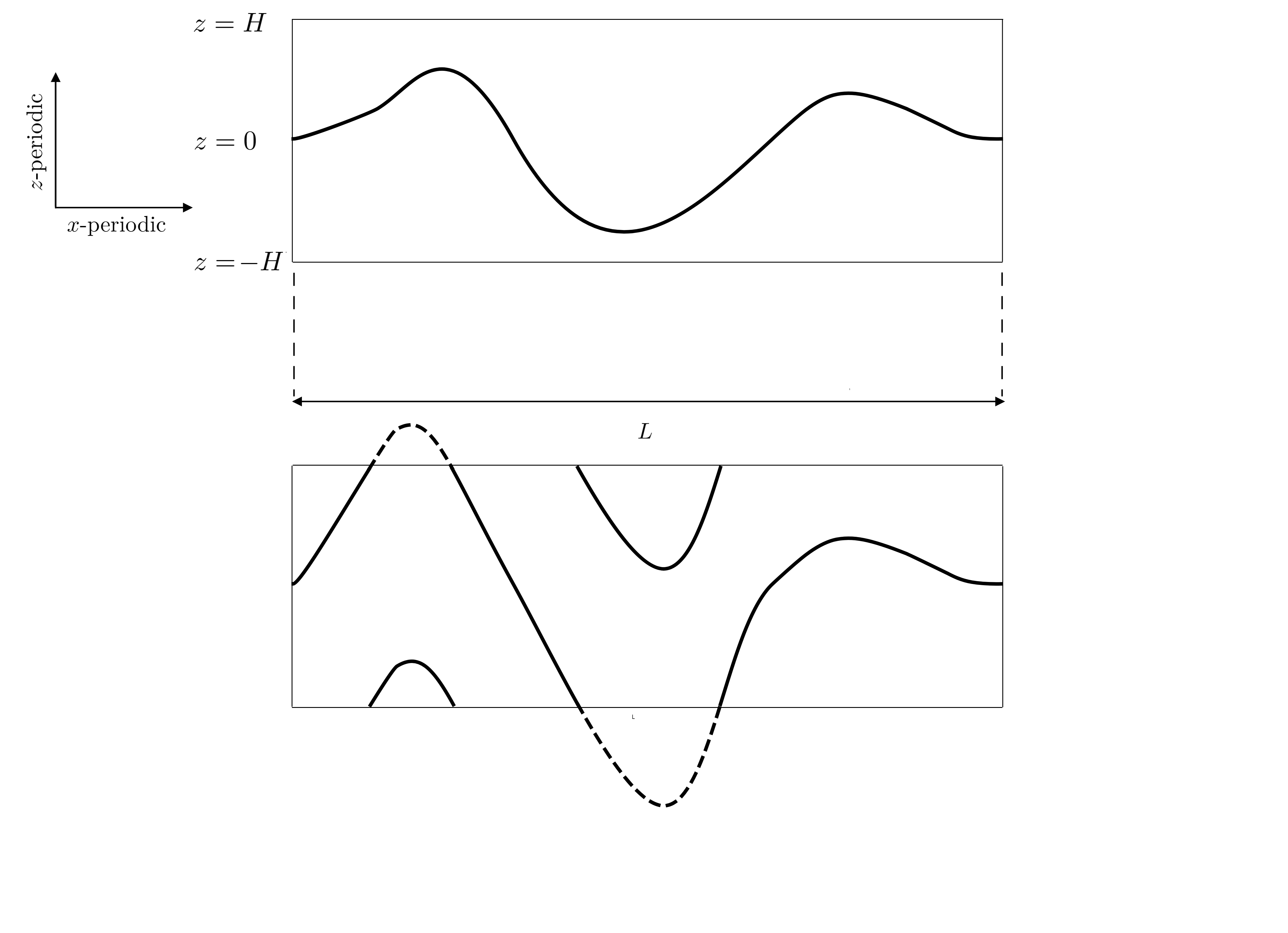}}
    \caption{Two examples of the beam inside the fluid. On the top, the structure is completely contained in $\Omega$ so $\Gamma^\eta(t)=\hat{\Gamma}^\eta(t)$. On the bottom, the structure leaves $\Omega$ and re-enters from the other side so $\Gamma^\eta(t)\neq\hat{\Gamma}^\eta(t)$ (the dashed part represents $\Gamma^\eta(t)\setminus\hat{\Gamma}^\eta(t)$).}
    \label{figure1}
\end{figure}
The time-space cylinders corresponding to our problem will be denoted as
\begin{equation*}%%%%%%%%%%%%%
		Q_T:=(0,T)\times\Omega, \quad \Gamma_T:=(0,T)\times\Gamma.
\end{equation*}%%----------------------------%%

\noindent
The governing equations for our coupled fluid-structure interaction problem read as follows:\\

\noindent
\textbf{The viscoelastic beam equation} on $\Gamma_T$:
\begin{equation}\label{structureeqs}
\eta_{tt}+ \eta_{xxxx} -  \eta_{txx}=-S^\eta \bb{f}_{fl}\cdot \mathbf{e_2} + f. 
\end{equation}
Here $f$ denotes a given external time-periodic force acting on the viscoelastic beam and $\bb{f}_{fl}$ is the force with which the fluid acts on the beam. Moreover, $S^\eta=\sqrt{1+|\eta_x|^2}$ is the Jacobian of the transformation from Eulerian to Lagrangian coordinates of the beam (i.e. from $\Gamma^\eta$ to $\Gamma$). \\

\noindent
\textbf{The compressible Navier-Stokes equations} on $\bigcup_{t\in(0,T)}\{t\}\times  \big(\Omega\setminus\hat{\Gamma}^{\eta}(t)\big)$:
\begin{equation}
\begin{split}
    \partial_t (\rho\mathbf{u}) + \nabla \cdot (\rho\mathbf{u}\otimes \mathbf{u})& = -\nabla p(\rho) +\nabla \cdot \mathbb{S}(\nabla \bb{u})+\rho \bb{F}, \\ %\label{momeq}\\
    \partial_t \rho + \nabla \cdot (\rho \mathbf{u}) &= 0, %\label{conteq}
\end{split}
\end{equation}
where we set the pressure $p$ for simplicity to be
\begin{equation*}%%%%%%%%%%%%%
p(\rho)=\rho^\gamma,    
\end{equation*}%%----------------------------%%
the viscous stress tensor $\mathbb{S}$ is given by the Newton rheological law
\begin{equation*}%%%%%%%%%%%%%
    \mathbb{S}(\nabla \bb{u}):=\mu \big( \nabla \bb{u} + \nabla^\tau \bb{u}- \nabla \cdot \bb{u} \mathbb{I} \big) + \zeta \nabla \cdot \bb{u}\mathbb{I},\quad \mu,\zeta >0,
\end{equation*}%%----------------------------%%
and $\bb{F}$ is a given time-periodic force acting onto the fluid.\\

\noindent
\textbf{The fluid-structure coupling (kinematic and dynamic, resp.)} on $\Gamma_T$:
\begin{eqnarray}%%%%%%%%%%%%%
    \eta_t (t,x) \mathbf{e_2}&=& \mathbf{u}(t,x,\hat{\eta}(t,x)), \label{kinc}\\
    \bb{f}_{fl}(t,x)&=&\big[\big[(-p(\rho)\mathbb{I}+\mathbb{S}(\nabla \bb{u}))\big]\big](t,x,\hat\eta(t,x))~\nu^\eta(t,x), \label{dync}
\end{eqnarray}%%----------------------------%%
where $\nu^\eta=\frac{(-\eta_x,1)}{\sqrt{1+|\eta_x|^2}}$ denotes the normal vector on $\Gamma^\eta$ facing upwards and
\begin{equation*}%%%%%%%%%%%%%
    [[A]](\cdot,z):= \lim\limits_{\varepsilon\to 0^+} \big(A(\cdot,z-\varepsilon)-A(\cdot,z+\varepsilon)\big)
\end{equation*}%%----------------------------%%
represents the jump of quantity $A$ in the vertical direction.\\

\noindent
\textbf{The beam boundary conditions}:% (clamped)}:
\begin{equation}
    \eta \text{ is periodic in } x \text{ and } \eta(t, x)= 0,%\eta_x(t, x)=0
    \quad (t,x)\in (0,T)\times \{0,L\}.\label{boundaryconditions1}
\end{equation}
% or
% \begin{equation}
%     \eta(t, x)= \eta_x(t, x) =0
%     \quad (t,x)\in (0,T)\times \{0,L\}.\label{boundaryconditions1b}
% \end{equation}

\noindent
\textbf{Fluid spatial periodicity}: 
\begin{equation}%%%%%%%%%%%%%
    \rho,\bb{u} \text{ are periodic in }x \text{ and } z\text{ directions}.
\end{equation}%%----------------------------%%

\noindent
\textbf{Time periodicity}: 
\begin{equation}%%%%%%%%%%%%%
    \rho,\bb{u}, \eta \text{ are periodic in time}.\label{eq:timeper}
\end{equation}%%----------------------------%%

\section{Weak solution and main result}
The nature of the studied problem enables us to work with two equivalent formulations of the problem. In the original formulation, the domain $\Omega$ is fixed and the viscoelastic beam appears inside the domain $\Omega$. However, we may use the $z-$periodicity of the problem to formulate it on the moving domain $\Omega^\eta(t)$ filled with the fluid, where the top and the bottom of the domain is given by the viscoelastic beam. For a given $\eta(t,x)$ we introduce an equivalent fluid domain and the corresponding time-space cylinder
\begin{equation}\label{eqv:domain}%%%%%%%%%%%%%
    \Omega^\eta(t):=\{(x,z): x\in(0,L), \eta(t,x)<z<\eta(t,x)+2H \}, \qquad Q_T^\eta:=\bigcup_{t\in (0,T)} \{t\}\times \Omega^\eta(t),
\end{equation}%%----------------------------%%
both domains are demostrated in Figure \ref{figure2}.
% The lower and upper boundaries are $\Gamma^\eta(t)$ and $\Gamma^\eta(t)+2H$, see figure \ref{figure2}. In order to work easily with both the boundary conditions for the fluid and for the viscoelastic beam at the same time, we extend $\eta(t,x)$ in the $x$ direction as an $L$-periodic function, i.e. we define
% \begin{equation*}%\label{eqv:etaprolong}%%%%%%%%%%%%%
%     \eta_\#(t,x+nL) := \eta(t,x) \qquad \text{ for all } t \in \R, x \in [0,L) \text{ and } n \in \mathbb{Z}.
% \end{equation*}%%----------------------------%%
% Note that if $\eta(t,\cdot) \in C^\infty([0,L])$ and satisfies \eqref{boundaryconditions1}, the extended function $\eta_\#(t,\cdot) \in C^1(\R)$ but in general is not smoother, as it may have jumps in second and higher derivatives in points $x = nL$. Later on we do not distinguish between $\eta$ and $\eta_\#$, both functions will be denoted by $\eta$. For simplicity of further notation we also introduce
% \begin{equation*}%\label{eqv:domain2}%%%%%%%%%%%%%
%     \Omega_\#^\eta(t):=\{(x,z): x\in \R, \eta(t,x)<z<\eta(t,x)+2H \}, \qquad
%     Q^\eta_\# := \bigcup_{t\in \R} \{t\}\times \Omega^\eta_\#(t).
% \end{equation*}%%----------------------------%%
% which are just extended versions of $\Omega^\eta$ and $Q_T^\eta$ in the $x$-direction and in time, respectively. 

\begin{figure}[H]
    \centering\makebox[\textwidth]{\includegraphics[width=\columnwidth]{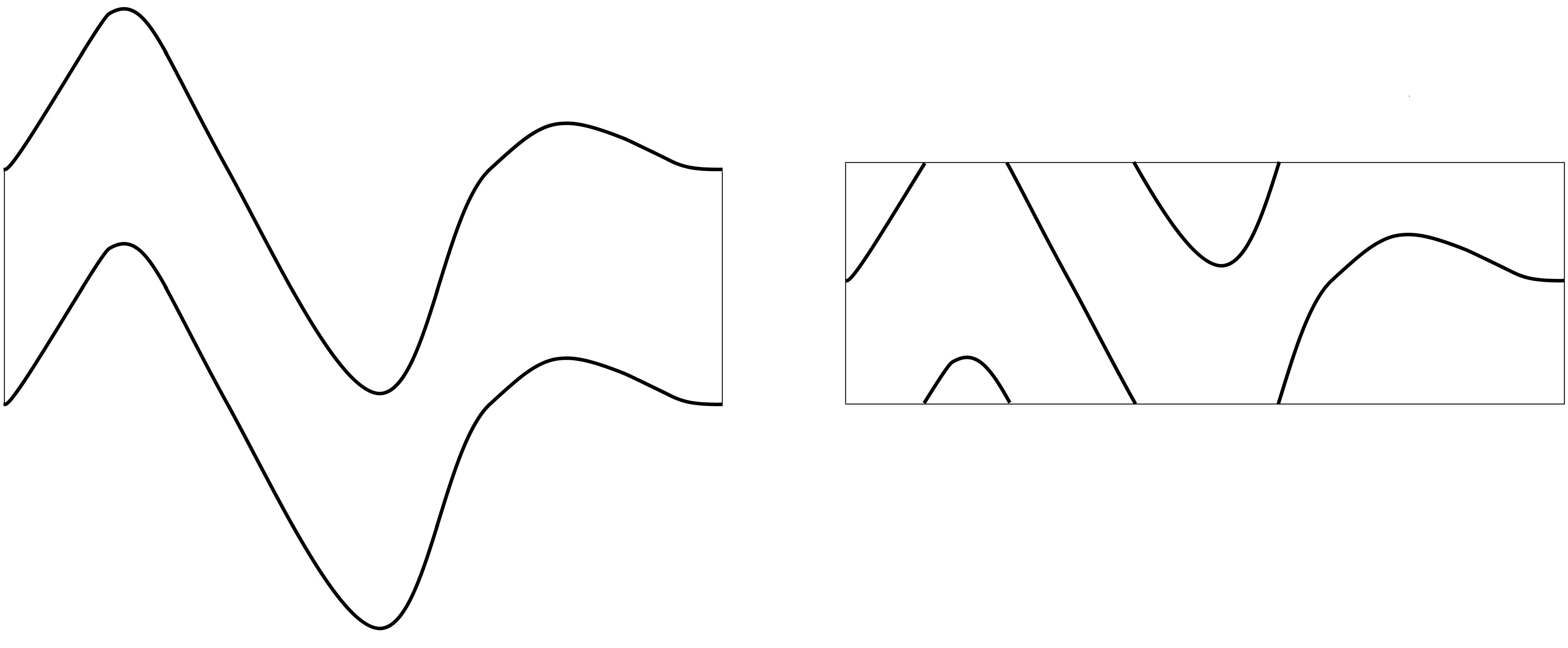}}
    \caption{$\Omega^\eta(t)$ on the left and $\Omega$ on the right.}
    \label{figure2}
\end{figure}

For a set\footnote{Here, $S$ will represents either one of the sets $(0,T)$, $\Gamma$, $\Omega$ or some of their products.} $S=(a_1,a_1+L_1)\times \dots \times (a_n,a_n+L_n)$ where  $L_1,...,L_n>0$ and $n\in \{1,2,3\}$, we introduce the spaces of differentiable periodic functions for $k \in \mathbb{N}_0 \cup \{\infty\}$
\begin{multline*}%%%%%%%%%%%%%
   C_\#^k(S):=\{f \in C^k(\mathbb{R}^n): f(x_1,\ldots,x_n) = f(x_1+L_1,\ldots,x_n) =...=f(x_1,\ldots,x_n+L_n)\\ \text{ for all } (x_1,\ldots,x_n)\in \mathbb{R}^n \}.
\end{multline*}%%----------------------------%%
We define Lebesgue and Sobolev function spaces for any $p,q\in[1,\infty]$, $k \in \mathbb{N}_0 \cup \{\infty\}$ as closures in the respective norms
\begin{eqnarray*}%%%%%%%%%%%%%
    W_\#^{k,p}(S):=\overline{C_\#^\infty(S)}^{\|\cdot\|_{W^{k,p}(S)}}.
\end{eqnarray*}%%----------------------------%%
In order to accommodate the boundary conditions \eqref{boundaryconditions1} we further introduce the spaces
\begin{equation*}%%%%%%%%%%%%%
\begin{split}
    &C^k_{\#,0}(\Gamma):=\{\varphi\in C^k_\#(\Gamma): \varphi(0) %= \varphi_x(0) 
    = 0 \}, \\
    &C^k_{\#,0}(\Gamma_T):=\{\varphi\in C^k_\#(\Gamma_T): \varphi(t,0) %= \varphi_x(t,0)
    = 0 \text{ for all } t \in \R \},
\end{split}
\end{equation*}%%----------------------------%%
for $k \in \mathbb{N}_0 \cup \{\infty\}$, and the corresponding closure 
\begin{eqnarray*}%%%%%%%%%%%%%
    W_{\#,0}^{k,p}(\Gamma):=\overline{C_{\#,0}^\infty(\Gamma)}^{\|\cdot\|_{W^{k,p}(\Gamma)}}.
\end{eqnarray*}%%----------------------------%%
Finally, we define %\footnote{OK: I do not like the second line I wrote here, there should be a better way to write it. Do we even need to introduce Bochner spaces this way?}
\begin{equation*}%%%%%%%%%%%%%
\begin{split}
    &L_\#^p(0,T;W_\#^{1,q}(\Omega)):=\{f\in L_\#^p(0,T;L_\#^q(\Omega)): \nabla f\in L_\#^p(0,T;L_\#^q(\Omega))\}, \\
    &W_\#^{1,p}(0,T;L_\#^{q}(\Gamma)):=\{f\in L_\#^p(0,T;L_\#^q(\Gamma)): \partial_t f\in L_\#^p(0,T;L_\#^q(\Gamma))\}.
\end{split}
\end{equation*}%%----------------------------%%
As usual, $H^k$ denotes Sobolev spaces $W^{k,2}$.
%{\color{red} Finally, we introduce the space 
%\begin{eqnarray*}%%%%%%%%%%%%%
%    C^\infty_0(\overline{\Gamma}):=\left\{\psi\in C^\infty(\overline{\Gamma}): \psi(0) =\psi(L) = \psi_x(0) = \psi_x(L) = 0 \right\}.
%\end{eqnarray*}%%----------------------------%%
%Note that function $\psi \in C^\infty_0(\overline{\Gamma})$ can be extended periodically to $\R$ and such functions belong in particular to $C^1_\#(\Gamma)$ and $W^{2,\infty}_\#(\Gamma)$. We will without further notice identify functions in $C^\infty_0(\overline{\Gamma})$ with their periodically extended counterparts defined on $\R$.}
For a function $f\in C_\#^1(\Omega)$ and $\eta\in C^1_{\#,0}(\Gamma)$, we can define the Lagrangian trace on $\hat\Gamma^\eta$ as
\begin{equation*}%%%%%%%%%%%%%
    \gamma_{|\hat{\Gamma}^\eta} f(x):=f(x,\hat\eta(x))
\end{equation*}%%----------------------------%%
and then extend it to a linear and continuous operator $\gamma_{|\hat{\Gamma}^\eta}: H^1_\#(\Omega)\to H^{\frac12}_\#(\Gamma)$. Here $H^{\frac12}$ denotes the Sobolev-Slobodetskii space. Finally, we will denote the two-dimensional space variable $\bb{y} = (x,z)$.

\begin{mydef}[\textbf{Weak solution}]\label{weak:sol:def}
We say that $\rho \in L_\#^\infty(0,T; L_\#^\gamma(\Omega))$, $\bb{u}\in L_\#^2(0,T; H_\#^1(\Omega))$ and $\eta \in W_\#^{1,\infty}(0,T; L^2_\#(\Gamma))\cap L_\#^\infty(0,T; H_{\#}^2(\Gamma))\cap H_\#^1(0,T; H_{\#,0}^1(\Omega))$ is a weak solution to \eqref{structureeqs}-\eqref{eq:timeper} if:
\begin{enumerate}
    \item The kinematic coupling $\gamma_{|\hat{\Gamma}^\eta}\bb{u} = \eta_t \bb{e}_2$ holds on $\Gamma_T$.
    \item The renormalized continuity equation
\begin{equation}\label{reconteqweak}%%%%%%%%%%%%%
 \int_{Q_T} \rho B(\rho)( \partial_t \varphi +\bb{u}\cdot \nabla \varphi)\di \bb{y} \dd t =\int_{Q_T} b(\rho)(\nabla\cdot \bb{u}) \varphi \di \bb{y} \dd t
\end{equation}%%----------------------------%%
holds for all functions $\varphi \in C_\#^\infty(Q_T)$ and any $b\in L^\infty (0,\infty) \cap C[0,\infty)$ such that $b(0)=0$ with $B(\rho)=B(1)+\int_1^\rho \frac{b(z)}{z^2}dz$.
\item The coupled momentum equation
\begin{multline}%%%%%%%%%%%%%
    \int_{Q_T} \rho \bb{u} \cdot\partial_t \boldsymbol\varphi \di \bb{y} \dd t + \int_{Q_T}(\rho \bb{u} \otimes \bb{u}):\nabla\boldsymbol\varphi \di \bb{y} \dd t +\int_{Q_T} \rho^\gamma (\nabla \cdot \boldsymbol\varphi) \di \bb{y} \dd t -  \int_{Q_T} \mathbb{S}( \nabla\bb{u}): \nabla \boldsymbol\varphi \di \bb{y} \dd t\\
    +\int_{\Gamma_T} \eta_t \psi_t \di x \dd t - \int_{\Gamma_T}\eta_{xx} \psi_{xx} \di x \dd t- \bint_{\Gamma_T} \eta_{tx} \psi_x  \di x \dd t   =  -\int_{\Gamma_T} f\psi \di x \dd t - \int_{Q_T} \rho \bb{F}\cdot \boldsymbol\varphi \di \bb{y} \dd t \label{momeqweak}
\end{multline}%%----------------------------%%
holds for all $\boldsymbol\varphi \in C_{\#}^\infty(Q_T)$ %\cap C^\infty_{\#}(\overline{Q_T^\eta})$
and all %\footnote{Note that since $\psi\in C_\#^\infty([0,T];C^\infty_0(\Gamma))$, it can be periodically extended to $C_\#^\infty([0,T]\times \mathbb{R})$, so the coupling of the test functions makes sense.} 
$\psi\in C^\infty_{\#,0}(\Gamma_T)$ %\cap C^\infty_\#(\overline{\Gamma_T})$ 
such that $\boldsymbol\varphi (t,x,\hat{\eta}(t,x))=\psi(t,x)\bb{e}_2$ on $\Gamma_T$.
\end{enumerate}
\end{mydef}

We note that the choice $b(\rho) = 0$ in \eqref{reconteqweak} recovers the standard weak formulation of the continuity equation. Our main result reads as follows.

\begin{thm}[\textbf{Main result}]\label{t:main}
Let $H,L,T,m_0>0$ be given and let $\gamma > 1$. Let $f\in L_\#^2(\Gamma_T)$ and $\bb{F}\in L_\#^2(0,T; L_\#^\infty(\Omega))$. Then, there exists at least one weak solution to \eqref{structureeqs}-\eqref{eq:timeper} in the sense of Definition \ref{weak:sol:def} such that
\begin{equation*}
    \int_{\Omega}\rho(t)\di \bb{y}=m_0
\end{equation*}
for almost all $t\in (0,T)$ and the energy inequality
\begin{eqnarray}%%%%%%%%%%%%%
    &&-\int_{Q_T}\phi_t\left(\frac12  \rho|\bb{u}|^2 + \frac{1}{\gamma-1}\rho^\gamma \right) \di \bb{y} \dd t- \int_{\Gamma_T}\phi_t\left(\frac12 |\eta_t |^2 + \frac12 |\eta_{xx}  |^2\right)(t)\di x \dd t \nonumber \\
    &&\quad +\int_0^T \int_{\Omega}\phi\mathbb{S}(\nabla \bb{u}):\nabla \bb{u} \di \bb{y} \dd t +\int_0^T  \int_\Gamma \phi | \eta_{tx}|^2 \di x \dd t \nonumber\\
    &&\quad \leq \int_0^T  \int_\Gamma \phi f\eta_t \di x \dd t + \int_0^T \int_\Omega \phi\rho\bb{u}\cdot\bb{F} \di \bb{y} \dd t \label{eq:EE1}
\end{eqnarray}%%----------------------------%%
holds for all $\phi \in C_\#^\infty(0,T)$, $\phi \geq 0$. Moreover,
% \begin{multline}\label{eq:EEbound}%%%%%%%%%%%%%
%   \int_{\Omega} \Big( \underbrace{\frac{1}{2} \rho |\bb{u}|^2}_{\text{fluid kinetic energy}} + \underbrace{\frac{1}{\gamma-1}\rho^\gamma }_{\text{fluid internal energy} }\Big)(t)\di x+\int_0^T \int_{\Omega}\underbrace{\mathbb{S}(\nabla \bb{u}):\nabla \bb{u}}_{\text{fluid dissipation}}\di x\dd t\\
%   + \int_{\Gamma} \Big(\underbrace{\frac{1}{2}|\eta_t|^2}_{\text{structure kinetic energy}} + \underbrace{\frac{1}{2} |\eta_{xx}|^2 }_{\text{structure elastic energy}}  \Big)(t) \di x   + \displaystyle{\int_0^T  \int_\Gamma \underbrace{| \eta_{tx}|^2  }_{\text{structure dissipation}}}\di x\dd t\\ \leq C(f,\bb{F},\Omega,m_0), 
% \end{multline}%%----------------------------%%
\begin{multline}\label{eq:EEbound}%%%%%%%%%%%%%
  \sup_{t \in (0,T)}\Big[ \int_{\Omega} \Big( \frac{1}{2} \rho |\bb{u}|^2 + \frac{1}{\gamma-1}\rho^\gamma\Big) \di \bb{y} + \int_\Gamma \Big(\frac{1}{2}|\eta_t|^2 + \frac{1}{2} |\eta_{xx}|^2\Big)\di x\Big](t)\\
  +\int_{Q_T}\mathbb{S}(\nabla \bb{u}):\nabla \bb{u} \di \bb{y}\dd t + \int_{\Gamma_T}|\eta_{tx}|^2  \di x\dd t \leq C(f,\bb{F},\Omega,m_0). 
\end{multline}%%----------------------------%%

\end{thm}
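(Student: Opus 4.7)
The plan is to construct a time-periodic weak solution via a multi-level approximation scheme combining the Feireisl--Lions framework for the compressible Navier--Stokes equations with penalization techniques developed for compressible fluid--structure interaction with an immersed beam. I would introduce three small parameters: an artificial viscosity $\varepsilon>0$ adding $\varepsilon\Delta\rho$ to the continuity equation, an artificial pressure $\delta>0$ adding $\delta\rho^\beta$ to the fluid stress with $\beta$ sufficiently large, and a mass-damping parameter $\chi>0$ producing a term $\chi(\rho-m_0/|\Omega|)$ in the continuity equation. The last device, classical in the periodic-compressible literature, guarantees that the continuity equation inverts uniquely for a given velocity and that the total mass is driven to the prescribed value $m_0$; it is the key tool to close the periodic fixed-point argument and to produce solutions of a prescribed mass. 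To accommodate the immersed beam, the $z$-periodic description via $\hat\eta$ is convenient, and at the approximate level $\eta$ should be mollified so that the interface $\hat\Gamma^\eta$ remains regular enough either for an ALE change of variables to a reference cylinder or for a Brinkman-type penalization of the kinematic coupling.

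At each approximation level, existence of a periodic solution would follow from a Faedo--Galerkin scheme combined with Schauder's fixed-point theorem. For given $\bb{u}$ and $\eta$, the damped parabolic continuity equation admits a unique $T$-periodic density $\rho=\rho[\bb{u}]$; substituting into the momentum and beam equations, projecting onto finite-dimensional bases coupled through the admissibility condition $\boldsymbol{\varphi}(\cdot,\hat\eta)=\psi\bb{e}_2$, and iterating yields a periodic approximate solution. The uniform energy bound \eqref{eq:EEbound} is obtained by testing the momentum equation with $\bb{u}$ itself and the beam equation with $\eta_t$, where the interface stress terms cancel thanks to the kinematic coupling, followed by a Young-type absorption of $\int f\eta_t$ and $\int \rho\bb{u}\cdot\bb{F}$; these estimates are uniform in $\varepsilon,\delta,\chi$ and the Galerkin dimension.

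The limits are then passed in the order Galerkin dimension $\to\infty$, $\chi\to 0$, $\varepsilon\to 0$, $\delta\to 0$. Strong compactness of $\eta$ in $C([0,T];H^s(\Gamma))$ for any $s<2$ follows from Aubin--Lions applied to $\eta\in L^\infty(0,T;H^2_\#)\cap W^{1,\infty}(0,T;L^2_\#)$, and this is what lets us preserve the kinematic coupling $\gamma_{|\hat\Gamma^\eta}\bb{u}=\eta_t\bb{e}_2$ across the limit. In the $\chi\to 0$ step, integrating the continuity equation over $\Omega$ shows that the mass equals exactly $m_0$ in the limit. I expect the hardest step to be the vanishing artificial pressure $\delta\to 0$: strong $L^1$-compactness of $\rho_\delta$ requires the effective viscous flux identity, whose derivation in the present geometry is delicate because test functions must respect the admissibility condition $\boldsymbol{\varphi}(\cdot,\hat\eta)=\psi\bb{e}_2$ together with the jump structure in \eqref{dync}. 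The principal obstacle is adapting the Feireisl oscillation defect measure together with the associated div-curl arguments to the time-varying fluid domain $\Omega\setminus\hat\Gamma^\eta(t)$ split by the immersed beam; this calls for a careful extension or truncation of velocity test functions across $\hat\Gamma^\eta$, which is the main technical novelty beyond the fixed-domain case. Because the whole construction is set in periodic-in-time function spaces, the periodicity \eqref{eq:timeper} is automatic as soon as the fixed point has been produced, so no separate periodization step is needed at the end.
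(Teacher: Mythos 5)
There is a genuine gap in your proposal, and it concerns precisely the estimate that distinguishes the time-periodic problem from the Cauchy problem. You claim that testing the fluid momentum equation with $\bb{u}$ and the beam equation with $\eta_t$, followed by Young's inequality, yields the uniform bound \eqref{eq:EEbound}. That is not so. Because the solution is time-periodic, integrating the energy identity over a period annihilates the terms $\tfrac{d}{dt}\int\bigl(\tfrac12\rho|\bb{u}|^2+\tfrac1{\gamma-1}\rho^\gamma\bigr)$ and $\tfrac{d}{dt}\int\bigl(\tfrac12|\eta_t|^2+\tfrac12|\eta_{xx}|^2\bigr)$, so what survives is only the dissipation balance
\begin{equation*}
\int_{Q_T}\mathbb S(\nabla\bb{u}):\nabla\bb{u}\di\bb{y}\dd t+\int_{\Gamma_T}|\eta_{tx}|^2\di x\dd t=\int_{\Gamma_T}f\eta_t\di x\dd t+\int_{Q_T}\rho\bb{u}\cdot\bb{F}\di\bb{y}\dd t.
\end{equation*}
This controls $\nabla\bb{u}$ and $\eta_{tx}$, but says nothing directly about $\sup_t E(t)$: unlike the initial-value problem there is no initial energy to anchor the estimate, and in fact there is no a priori reason why a periodic solution should have bounded energy at all. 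Obtaining $\sup_t E(t)\le C$ is the central analytical difficulty of the paper, and it is achieved through a nontrivial bootstrap. One first derives the circular inequality $\mathcal E\le C_0\bigl(1+\int_0^T E(s)\,\dd s\bigr)$ by integrating the localized energy identity in both time variables; one must then prove the reverse estimate $\int_0^T E(s)\,\dd s\le\delta_0\mathcal E+C(\delta_0)$, which requires in sequence: interpolation estimates for $\rho$ in low Lebesgue norms, a careful estimate of $\|\eta_{xx}\|_{L^2}$ via the test pair $(\eta\bb{e}_2,\eta)$ together with a space-time interpolation Lemma for $\eta_x$, and pressure estimates through the Bogovskii operator split into an interior part (with a cut-off that degenerates near $\hat\Gamma^\eta$) and a near-boundary part (with an explicit piecewise-linear vertical test field). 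All exponents $\kappa,\alpha,\kappa',\kappa''$ must be tuned so that the final power of $\mathcal E$ on the right falls strictly below one. Your proposal contains none of this machinery, and without it the energy bound cannot be closed.

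A secondary but important omission: to carry the energy \emph{inequality} through the four limits, one needs strong convergence of the structure kinetic and elastic energies, i.e.\ $\int|\partial_{xx}\eta_n|^2\to\int|\partial_{xx}\eta|^2$ and $\int|\partial_t\eta_n|^2\to\int|\partial_t\eta|^2$, which is not automatic from weak convergence. The paper obtains this via improved fractional regularity $\eta\in L^2(0,T;H^{2+s}(\Gamma))$ for some $s\in(0,\tfrac14)$, proved by testing the coupled momentum equation with difference quotients $D^s_{-h}D^s_h[\eta]$ in the spirit of Muha--Schwarzacher, and via a comparison argument that identifies $\lim\int|\eta_t|^2$ through testing with $(\eta\bb{e}_2,\eta)$. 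Your plan, which relies on generic Aubin--Lions compactness of $\eta$ in $C([0,T];H^s)$ for $s<2$, does not give either piece of information. Aside from these two points, your structural choices (penalization of the kinematic coupling, artificial diffusion with a mass-restoring term $\varepsilon(M-\rho)$ which you split off as a separate parameter $\chi$, artificial pressure $\delta\rho^a$, effective viscous flux and oscillation defect measure for $\delta\to 0$) are consistent with what the paper does; and you are also correct that building periodicity into the time basis removes the need for a separate periodization step.
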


\begin{rem}[Strategy of the proof]
The proof of this theorem is based on a  four-level approximation scheme. Following the approach from \cite{trwa} (see also \cite{MaMuNeRoTr}), we decouple the coupled momentum equation to the fluid momentum equation and the structure momentum equation by penalizing the kinematic coupling condition $\eqref{kinc}$. This allows us to deal with these equations separately. Then, we choose to span the fluid velocity and the structure displacement in finite time-space bases, as it was done in \cite{FMNP} (note that this is in contrast with the fixed-point approach which was used in \cite{FNPS,MS}). Finally, as it is standard in the theory of compressible Navier-Stokes equations, artificial diffusion is added to the fluid continuity equation and the artificial pressure is added to the fluid momentum equation. Several other terms are also added due to technical reasons. In order to obtain a weak solution, there are four limits that are performed, each of them being based on estimates that significantly differ from a limit to limit due to their high sensitivity to the approximation parameters. Unlike the initial value problem, we need to additionally ensure that the energy inequality of the form $\eqref{eq:EE1}$ is satisfied at each approximation level to obtain some important estimates, and for this we need to prove the convergence of the structure kinetic and elastic energies in each of the limits. This part is based on improved structure displacement estimates from \cite{MuhaSch}, adapted to our framework similarly as in \cite{Tr}.
\end{rem}

\begin{rem}\label{rem:domains}
Throughout the proof, we will work with formulations of the problem both on $\Omega$ and on $\Omega^\eta(t)$. As both the fluid velocity $\bb{u}$ and density $\rho$ can be represented on $\Omega^\eta(t)$ equivalently, we keep the same notation for $\bb{u}$ and $\rho$ whenever we shift to the domain $\Omega^\eta(t)$. Let us point out that $\bb{u}$ is continuous on $\hat{\Gamma}^\eta(t)$ so $\|\bb{u}\|_{W^{1,p}(\Omega^\eta(t))}= \|\bb{u}\|_{W^{1,p}(\Omega)}$ for any $p\in [1,\infty]$, while $\rho$ may have a jump on $\hat{\Gamma}^\eta(t)$ so we use $\|\rho\|_{L^{p}(\Omega^\eta(t))}= \|\rho\|_{L^{p}(\Omega)}$ for $p\in [1,\infty]$ only.
\end{rem}

\section{Discussion and literature overview}
%Area of fluid-structure interaction has a clear physical motivation -- it studies how solids and fluids interact with each other. From a physical point of view, it is a mixed coupled physical system with examples in real life such as blood vessels, trachea, parachutes, airplanes and bridges under the effect of air flow and many others. % From a mathematical point of view, fluid-structure interaction problems are described by a system of partial differential equations governing the flow of the fluid and the deformation (position) of the elastic (rigid) solid, supplemented with coupling conditions at the interface which describe the interaction. There are a few major challenges which make such problems difficult to study. First, the governing equations tend to be nonlinear and of different type (hyperbolic and parabolic), depending on the situation which is studied. Moreover, the domain of the fluid usually depends on the space the solid occupies, making it a free boundary problem where the boundary is not given a priori, but rather has to be obtained by solving the problem. Finally, each of the fluid and elasticity theories bring their own challenges, which are usually more difficult in this coupled framework.

The mathematical theory of the interaction problems between incompressible viscous fluids and thin elastic structures (plates or shells) has started with results of Beirao da Veiga \cite{veiga} and Grandmont et al. \cite{grandmont3,grandmont4}, and continued to develop in the last two decades, see \cite{LeRu14,MuhaSch,CanicMuha,muhacanic2,trwa,grandmont5,unrestricted} for the existence of weak solutions, \cite{AbelsLiu1, AbelsLiu2,
grandmont1,grandmont2,MR4189724,Avalos1,grandmont5,Leq,Leq1} for the existence of strong solutions and \cite{continuous,Sebastian} for uniqueness.  Theory involving compressible viscous fluids interacting with plates and shells on the other started quite recently with the result of Schwarzacher and Breit \cite{Breit}, and continued with \cite{Tri1} where weak solution was obtained for an interaction between a compressible viscous fluid and a nonlinear thermoelastic plate. Local in time regular solutions were constructed in \cite{sourav,RoyMaity}, while the weak-strong uniqueness for such problems was studied in \cite{Tr}. In the case of heat-conducting fluids, interaction with an elastic plate was considered in \cite{Breit2} where a weak solution was constructed which satisfies the energy equality, and an interaction with a viscoelastic plate was considered in \cite{MaityTakahahi} where the strong solution with maximal regularity was constructed. The interaction of heat-conducting fluids and thermoelastic shells with heat exchange was studied in \cite{MaMuNeRoTr}, where a weak solution was constructed. The case of mixture with elastic structure was studied in \cite{KMN}. A semigroup approach to wellposedness of the problem of interaction of a linearized compressible fluid with an elastic boundary was presented in \cite{Avalos2}. Finally, local in time regular solutions to the interaction problems between 3D elastic solids and fluids were obtained in \cite{CS05, CS06,KT12,RV14,BG17}, while weak solutions were constructed in \cite{benevsova2020variational,breit2021compressible}. We also refer the reader to a very recent result \cite{contact} where such a problem with allowed contact was studied.

With all this in mind, little attention has been given to time-periodic solutions, or more precisely, to the question when the fluid-structure interaction model has a periodic behaviour under periodic forcing. Indeed, this question is of big importance, since many models tend to show periodic behavior. For example, heart beats and air flow through trachea are both periodic. Therefore, one can naturally ask, under what condition we can expect such models to behave periodically? This was first studied by Casanova for an interaction problem between a viscoelastic beam and an incompressible fluid \cite{casanova} in the framework of strong solutions. Quite recently, Schwarzacher and M\^{i}ndril\v{a} studied the interaction of a linear Koiter shell with an incompressible viscous fluid and obtained existence of a weak solution with a closed rigid boundary with no-slip condition in \cite{MS} and a dynamic pressure boundary condition in \cite{MS1}. Finally, concerning the purely fluid system, the time-periodic weak solutions to the compressible Navier-Stokes system on a fixed domain were constructed in \cite{FNPS} for isentropic flows and in \cite{FMNP} for the full Navier-Stokes-Fourier system.

The main goal of this paper is to tackle this issue in the case when the fluid is compressible. This brings many challenges which do not exist in the incompressible case. The main challenge in the compressible viscous fluid theory is dealing with pressure and our case is no different. The estimates based on Bogovskii operator for the pressure are very sensitive to the shape of the domain (and thus on deformations of the beam) and many other factors including dimension. This directly results in limitations in our result, i.e. the dimension of the fluid is two, the beam is visoelastic and the fluid domain is periodic in horizontal and vertical direction which a priori excludes contact for the beam.

The paper is organized as follows. In Section 4 we present a way to obtain a priori estimates assuming the solution is sufficiently smooth. This procedure is split into several steps. In Section 5 we present the approximation scheme used in the proof of Theorem \ref{t:main} and prove the existence of a solution to the approximated system. In Section 6 we pass to the limit in the number of time basis functions $m \to \infty$ and present uniform estimates for the arising solution independent of $n$. In Section 7 we pass to the limit in the number of spatial basis functions $n \to \infty$, deduce uniform bounds independent of $\ep$ and introduce the coupled momentum equation. In Section 8, we perform the limit with the penalization and artificial density diffusion parameter $\ep \to 0$  and deduce uniform bounds independent of $\delta$. Finally, in Section 9 we pass to the limit with $\delta \to 0$, thus removing the artificial pressure term and finishing the proof of Theorem \ref{t:main}.

\section{A priori estimates for smooth solutions}\label{aprioriest}
Before we start, let us introduce the energy associated to the studied system as 
\begin{equation*}%\label{energydef}
    E(t) := \int_{\Omega}\left(\frac12  \rho|\bb{u}|^2 + \frac{1}{\gamma-1}\rho^\gamma \right)(t)\di \bb{y} + \int_{\Gamma}\left(\frac12 |\eta_t |^2 + \frac12 |\eta_{xx}  |^2\right)(t)\di x
\end{equation*}
and we emphasize that replacing $\Omega$ with $\Omega^\eta(t)$ yields the same quantity, see Remark \ref{rem:domains}. Further, we denote 
\begin{equation*}%%%%%%%%%%%%%
    \mathcal{E} :=  \sup\limits_{(0,T)} E.
\end{equation*}%%----------------------------%%
The goal of this section is to show that smooth solutions to the problem \eqref{structureeqs}-\eqref{eq:timeper} satisfies the inequality $\eqref{eq:EEbound}$. This will serve as base in the forthcoming sections, where approximate problems with similar properties will be studied. We note that since we assume in this section that the solution is smooth, we are allowed to consider unbounded functions $b$ in \eqref{reconteqweak}.

% It is easy to observe that any function $h$ periodic in $z$, i.e. $h:(0,L)\times\R \to \R$ satisfying
% $$
% h(x,z) = h(x,z+2H) \qquad \text{ for any } (x,z) \in (0,L)\times\R
% $$
% can be restricted to $\Omega^\eta(t)$ while preserving its norm, more precisely
% \begin{eqnarray*}%%%%%%%%%%%%%
%     \|h\|_{W^{s,p}(\Omega^\eta(t))}= \|h\|_{W^{s,p}(\Omega)},\quad s\in [0,1], p\in[1,\infty], \qquad \text{ for any } h \in W^{s,p}_\#(\Omega).
% \end{eqnarray*}%%----------------------------%%
% Moreover, any function $g:\Omega^\eta(t) \to \R$ can be prolonged periodically to $(0,L)\times\R$ prescribing
% $$
% g(x,z+2nH) = g(x,z) \qquad \text{ for any } (x,z) \in \Omega^\eta(t) \text{ and any } n \in \mathbb{Z}
% $$
% and then restricted to $\Omega$. 

\subsection{Part I - estimates of $\nabla\bb{u}$ and $\eta_{tx}$}\label{partI}
In order to obtain the estimates, we sum up \eqref{reconteqweak} with $b(\rho)=\rho^\gamma$ and $\varphi=1$, \eqref{reconteqweak} with $b(\rho)=0$ and $\varphi=\frac12|\bb{u}|^2$ and \eqref{momeqweak} with $(\boldsymbol\varphi,\psi)=(\bb{u},\eta_t)$ to obtain
\begin{equation*}%%%%%%%%%%%%%
     \int_{Q_T} \mathbb{S}(\nabla \bb{u}):\nabla \bb{u}\di \bb{y}\dd t + \int_{\Gamma_T} |\eta_{tx}|^2\di x\dd t=\int_{\Gamma_T}f\eta_t\di x\dd t+\int_{Q_T} \rho\bb{u}\cdot \bb{F}\di \bb{y}\dd t
\end{equation*}%%----------------------------%%
and thus
\begin{multline*}%%%%%%%%%%%%%
    \int_{Q_T} \mathbb{S}(\nabla \bb{u}):\nabla \bb{u} \di \bb{y} \dd t+
    c(L)\|\eta_t \|_{L^2(0,T;H^1(\Gamma))}^2 \\
    \leq \int_{Q_T} \mathbb{S}(\nabla \bb{u}):\nabla \bb{u} \di \bb{y} \dd t + \int_{\Gamma_T} |\eta_{tx}|^2 \di x \dd t 
    = \int_{\Gamma_T}f\eta_t \di x \dd t +\int_{Q_T} \rho\bb{u}\cdot \bb{F} \di \bb{y} \dd t\\
    \leq \|{f}\|_{L^2(\Gamma_T)}  \|\eta_t \|_{L^2(\Gamma_T)} + \|\rho\|_{L^\infty(0,T; L^p(\Omega))}\|\bb{u}\|_{L^2(0,T; L^q(\Omega))} \| \bb{F} \|_{L^2(0,T;L^\infty(\Omega))}\\
    \leq C(f,L) + \frac{c(L)}2\| \eta_t \|_{L^2(0,T; H^1(\Gamma))}^2+C(\bb{F})\|\rho\|_{L^\infty(0,T; L^p(\Omega))} \|\bb{u}\|_{L^2(0,T; L^q(\Omega))}
\end{multline*}%%----------------------------%%
for any $p>1$ and $q=\frac{p}{p-1}$ by the Poincar\'e inequality for $\eta$. We have just deduced that
\begin{equation} %%%%%%%%%%%%%
    \int_{Q_T} \mathbb{S}(\nabla \bb{u}):\nabla \bb{u} \di \bb{y} \dd t +
    \|\eta_t \|_{L^2(0,T;H^1(\Gamma))}^2
    \leq C+C\|\rho\|_{L^\infty(0,T; L^p(\Omega))} \|\bb{u}\|_{L^2(0,T;L^q(\Omega))}. \label{eq:AP1}
\end{equation}%%----------------------------%%
From here onward, we omit the dependence of constants on $\Omega,f,\bb{F}$, since they are given and do not depend on functions $\rho,\bb{u},\eta$. 

Next, we shift to the moving domain $\Omega^\eta(t)$ given in \eqref{eqv:domain}. We have
\begin{equation*}%%%%%%%%%%%%%
    \|\eta_t \bb{e}_2\|_{L^2(0,T; H^1(\Omega^\eta(t)))} = 2H \|\eta_t \|_{L^2(0,T; H^1(\Gamma))}.
\end{equation*}%%----------------------------%%
Due to the kinematic coupling, we have that $\bb{u}-\eta \bb{e}_2=0$ on $\Gamma^\eta(t)$ and $\Gamma^\eta(t)+2H$, so by using the Korn identity on $\Omega^\eta(t)$
\begin{multline*}%%%%%%%%%%%%%
    \|\nabla\bb{u} - \nabla(\eta_t \bb{e}_2)\|_{L^2(Q_T^\eta)}^2 + \| \nabla\cdot(\bb{u}-\eta_t\bb{e}_2)\|_{L^2(Q_T^\eta)}^2 = 2\| \mathbb{D}(\bb{u} -\eta_t\bb{e}_2)\|_{L^2(Q_T^\eta)}^2 \\
    \leq C\|\mathbb{S}(\nabla\bb{u}-\nabla(\eta_t\bb{e}_2)\|_{L^2(Q_T^\eta)}^2 \leq C \left( \int_{Q_T^\eta} \mathbb{S}(\nabla \bb{u}):\nabla \bb{u} \di \bb{y} \dd t +
    \|\eta_t \|_{L^2(0,T;H^1(\Gamma))}^2 \right), 
\end{multline*}%%----------------------------%%
where $C$ only depends on $\mu,\zeta$. The Poincar\'{e} inequality yields
\begin{equation*}%%%%%%%%%%%%%
    \|\bb{u}-\eta_t\bb{e}_2\|_{H^1(\Omega^\eta(t))}^2 \leq C\|\nabla\bb{u}-\nabla(\eta_t\bb{e}_2)\|_{L^2(\Omega^\eta(t))}^2.
\end{equation*}%%----------------------------%%
Note that the constant $C$ is independent of $\eta$ -- this follows directly from the proof of the inequality for the steady domain \cite[Theorem 6.30]{AdFo}.
We use
\begin{equation*}%%%%%%%%%%%%%
    \|\eta_t\|_{L^\infty(\Gamma)} \leq C \|\eta_t\|_{H^1(\Gamma)}
\end{equation*}%%----------------------------%%
and $\bb{u}-\eta_t\bb{e}_2=0$ on $\Gamma^\eta(t)\cup\Gamma^\eta(t)+2H$ to conclude
\begin{multline}%%%%%%%%%%%%%
    \|\bb{u}\|_{L^2(0,T; L^q(\Omega^\eta(t)))}^2\leq 2\|\bb{u}-\eta_t\bb{e}_2\|_{L^2(0,T; L^q(\Omega^\eta(t)))}^2+2\|\eta_t\bb{e}_2 \|_{L^2(0,T; L^q(\Omega^\eta(t)))}^2\\
    \leq C\|\bb{u}-\eta_t\bb{e}_2\|_{L^2(0,T; H^1(\Omega^\eta(t)))}^2+ C\|\eta_t\bb{e}_2 \|_{L^2(0,T; H^1(\Omega^\eta(t)))}^2\\
    \leq C \int_{Q_T^\eta} \mathbb{S}(\nabla \bb{u}):\nabla \bb{u} \di \bb{y} \dd t + C\|\eta_t\|_{L^2(0,T; H^1(\Gamma))}^2 \label{eq:AP2}
\end{multline}%%----------------------------%%
for any $1<q<\infty$. 
We set\begin{equation}\label{epcond}
    \kappa := \min\left\{\frac1{20}, \frac{(\gamma-1)}{5\gamma}, \frac{1}{5(\gamma-1)}\right\}, 
\end{equation}
and
\begin{equation*}%\label{pbar}
    \overline{p} = \overline{p}(\kappa) := \frac{2\gamma^2}{2\gamma^2 - \kappa(\gamma-1)},
\end{equation*}    
so we have
\begin{equation*}%\label{thetapep}%%%%%%%%%%%%%
    \overline{\theta}:=\frac{\gamma(\overline{p}-1)}{\overline{p}(\gamma-1)} = \frac\kappa{2\gamma}.
\end{equation*}%%----------------------------%%
Then for any $1 < p < \overline{p}$ we have for some $\theta < \overline{\theta}$%\footnote{maybe for sure $C(1+\mathcal{E}^\frac{\ep}{2})$?}
\begin{equation}%%%%%%%%%%%%%
    \|\rho\|_{L^\infty(0,T; L^p(\Omega^\eta(t)))}\leq \|\rho\|_{L^\infty(0,T; L^1(\Omega^\eta(t)))}^{1-\theta} \|\rho\|_{L^\infty(0,T; L^\gamma(\Omega^\eta(t)))}^{\theta} \leq C m_0^{1-\theta} \mathcal{E}^{\gamma\theta}\leq C(1 + \mathcal{E}^{\frac{\kappa}{2}}).\label{rho:est}
\end{equation}%%----------------------------%%
Since 
\begin{equation*}%%%%%%%%%%%%%
    \|\rho\|_{L^\infty(0,T; L^p(\Omega^\eta(t)))}=\|\rho\|_{L^\infty(0,T; L^p(\Omega))}, \quad \int_{Q_T^\eta} \mathbb{S}(\nabla \bb{u}):\nabla \bb{u}\di \bb{y} \dd t = \int_{Q_T} \mathbb{S}(\nabla \bb{u}):\nabla \bb{u}\di \bb{y} \dd t,
\end{equation*}%%----------------------------%%
the inequalities \eqref{eq:AP1}, \eqref{eq:AP2} and \eqref{rho:est} yield
\begin{equation}\label{diss:est}%%%%%%%%%%%%%
   \| \bb{u}\|_{L^2(0,T;H^1(\Omega))}^2+
    \|\eta_t \|_{L^2(0,T;H^1(\Gamma))}^2 \leq C(\kappa)(1+\mathcal{E}^\kappa),
\end{equation}%%----------------------------%%
%where $\varepsilon>0$ can be chosen to be arbitrarily small and $C(\varepsilon)$ blows up as $\varepsilon\to 0$\footnote{now that $\ep$ is fixed, this comment is not necessary}, 
for the original domain, and consequently
\begin{equation}\label{u:est}%%%%%%%%%%%%%
    \|\bb{u} \|_{L^2(0,T;L^q(\Omega))}^2 \leq C(\kappa,q)(1+\mathcal{E}^\kappa)
\end{equation}%%----------------------------%%
for all $q > 1$.

%for all $q \geq \overline{q} := \frac{\overline{p}}{\overline{p}-1}$. % where $C(\varepsilon,q)$ blows up as $\varepsilon\to 0$ or $q\to \infty$. From here onward, we will omit the dependency of constant on $q$ since at no point will $q$ approach $\infty$.

\subsection{Part II - circular estimates}\label{partII}
In order to deduce the energy inequality, we sum up \eqref{reconteqweak} with $b(\rho)=\rho^\gamma$ and $\varphi=\chi_{[s,t]}$, \eqref{reconteqweak} with $b(\rho)=0$ and $\varphi=\chi_{[s,t]}\frac12|\bb{u}|^2$ and \eqref{momeqweak} with $(\boldsymbol\varphi,\psi)=(\chi_{[s,t]}\bb{u},\chi_{[s,t]}\eta_t)$ to obtain
\begin{multline*}%%%%%%%%%%%%%
    E(t) +\int_s^t\int_{\Omega} \mathbb{S}(\nabla \bb{u}):\nabla \bb{u}\di \bb{y} \dd \tau + \int_s^t\int_{\Gamma} |\eta_{tx}|^2\di x \dd\tau
    \\
    = E(s) + \int_s^t \int_{\Gamma} f \eta_t  \di x \dd\tau+ \int_s^t\int_{\Omega} \rho\bb{u}\cdot \bb{F}\di \bb{y} \dd \tau\\
    \leq E(s)+C(\kappa)(1+\mathcal{E}^\kappa) 
    \leq E(s)+C(\kappa)+\kappa \mathcal{E}
\end{multline*}%%----------------------------%%
by \eqref{rho:est}, \eqref{diss:est}, \eqref{u:est} and the Young inequality. We integrate again over $(0,T)$ with respect to variable $s$ and then we take a supremum in the variable $t$ over $(0,T)$ on the left hand side to obtain
\begin{equation}\label{eq:CIRC1}%%%%%%%%%%%%%
  \mathcal{E} \leq C_0\left(1+\int_0^T E(s)\di s\right).
\end{equation}%%----------------------------%%
%where we have chosen, say, $\varepsilon=\frac12$. 
The constant $C_0$ depends on the choice of $\kappa$, however we recall that $\kappa$ is already fixed. Our goal in the remaining part of the estimates is to show 
\begin{equation*}%\label{eq:CIRC1.5}%%%%%%%%%%%%%
  \int_0^T E(s)\di s \leq \delta_0\mathcal{E} + C(\delta_0)
\end{equation*}%%----------------------------%%
for some $\delta_0\in (0,\frac1{C_0})$.

\subsection{Part III - estimate of $\eta_{xx}$}\label{eta:est:sec}
In this section we need the following interpolation inequality.
\begin{lem}\label{l:interpol}
Let $g \in H^1(0,T;L^2(\Gamma)) \cap L^2(0,T;H^1(\Gamma))$. Then for any $\alpha \in (0,1)$ it holds $$
g \in H^\alpha(0,T;H^{1-\alpha}(\Gamma))
$$ 
and there exists a constant $C > 0$ independent of $g$ such that
\begin{equation*}
    \|g\|_{H^\alpha(0,T;H^{1-\alpha}(\Gamma))} \leq C \|g\|_{H^1(0,T;L^2(\Gamma))}^\alpha \|g\|_{L^2(0,T;H^1(\Gamma))}^{1-\alpha}.
\end{equation*}
\end{lem}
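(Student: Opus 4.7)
The plan is to reduce the statement to a standard Hilbert-space interpolation inequality, for which the most transparent route is via Fourier analysis applied to the spectral decomposition of the relevant spaces. First I would extend $g$ in the time variable to $\tilde g \in H^1(\mathbb{R};L^2(\Gamma)) \cap L^2(\mathbb{R};H^1(\Gamma))$ by a Stein-type construction, so that the norms on $(0,T)$ and on $\mathbb{R}$ are equivalent up to a constant depending only on $T$. Next, I would take the Fourier transform $t\mapsto \xi$ in time, and expand $g$ in an orthonormal eigenbasis $\{e_n\}$ of $-\partial_x^2$ on $\Gamma$ (incorporating the Dirichlet condition at $x=0$ arising from the space $H^1_{\#,0}$), writing $\hat{\tilde g}(\xi,x) = \sum_n \hat{\tilde g}_n(\xi)\, e_n(x)$. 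By Parseval, with $\lambda_n$ the corresponding eigenvalues, one obtains
\begin{align*}
\|\tilde g\|_{H^1(\mathbb{R};L^2(\Gamma))}^2 &\sim \sum_n \int_{\mathbb{R}} (1+\xi^2)\,|\hat{\tilde g}_n(\xi)|^2\, d\xi,\\
\|\tilde g\|_{L^2(\mathbb{R};H^1(\Gamma))}^2 &\sim \sum_n \int_{\mathbb{R}} (1+\lambda_n)\,|\hat{\tilde g}_n(\xi)|^2\, d\xi,\\
\|\tilde g\|_{H^\alpha(\mathbb{R};H^{1-\alpha}(\Gamma))}^2 &\sim \sum_n \int_{\mathbb{R}} (1+\xi^2)^\alpha (1+\lambda_n)^{1-\alpha}\,|\hat{\tilde g}_n(\xi)|^2\, d\xi.
\end{align*}
The first two equivalences are standard, and the third uses the Fourier characterization of $H^\alpha(\mathbb{R};\mathcal{H})$ for Hilbert targets together with the spectral description of $H^{1-\alpha}(\Gamma)$.

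The key step is then to factorize the weight in the last sum as
\[
(1+\xi^2)^\alpha (1+\lambda_n)^{1-\alpha}|\hat{\tilde g}_n(\xi)|^2 = \bigl[(1+\xi^2)|\hat{\tilde g}_n(\xi)|^2\bigr]^\alpha \bigl[(1+\lambda_n)|\hat{\tilde g}_n(\xi)|^2\bigr]^{1-\alpha},
\]
and apply H\"older's inequality with exponents $1/\alpha$ and $1/(1-\alpha)$ in the $(n,\xi)$-integral. This yields
\[
\|\tilde g\|_{H^\alpha(\mathbb{R};H^{1-\alpha}(\Gamma))}^2 \leq C\, \|\tilde g\|_{H^1(\mathbb{R};L^2(\Gamma))}^{2\alpha}\,\|\tilde g\|_{L^2(\mathbb{R};H^1(\Gamma))}^{2(1-\alpha)}.
\]
Taking square roots, restricting back to $(0,T)$ and inserting the extension bound produces the inequality claimed by the lemma, with a constant depending on $T$ and $\alpha$ but not on $g$.

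The main obstacle is essentially one of bookkeeping: selecting an orthonormal basis of $L^2(\Gamma)$ that simultaneously diagonalizes $H^1(\Gamma)$ (and hence, by Hilbert interpolation, $H^{1-\alpha}(\Gamma)$), and verifying the Fourier characterization of the vector-valued Sobolev-Slobodetskii space $H^\alpha(\mathbb{R};H^{1-\alpha}(\Gamma))$. Both are classical. A more abstract alternative is to invoke the Lions-Magenes identity
\[
\bigl[H^1(0,T;L^2(\Gamma)),\, L^2(0,T;H^1(\Gamma))\bigr]_{1-\alpha} = H^\alpha(0,T; H^{1-\alpha}(\Gamma))
\]
together with the generic complex interpolation inequality $\|g\|_{[X,Y]_\theta} \leq C\|g\|_X^{1-\theta}\|g\|_Y^\theta$, which delivers the conclusion at once.
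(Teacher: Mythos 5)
Your argument is correct and runs on essentially the same track as the paper's: extend in time, pass to a Fourier/spectral representation so that the three norms become weighted $L^2$-integrals with weights $(1+\xi^2)$, $(1+\lambda_n)$, and $(1+\xi^2)^\alpha(1+\lambda_n)^{1-\alpha}$, and then apply H\"older with exponents $1/\alpha$ and $1/(1-\alpha)$ to the factorized weight. The paper does exactly this, the only cosmetic difference being that it extends in both $t$ and $x$ and then takes the full Fourier transform on $\mathbb{R}^2$, whereas you extend only in $t$ and expand in a spatial eigenbasis of $-\partial_x^2$ on $\Gamma$; the two bookkeepings are interchangeable. One small correction: the lemma is stated and applied (to $\eta_x$, which is periodic but need not vanish at $x=0$) on plain $H^1(\Gamma)$, not on $H^1_{\#,0}$, so there is no Dirichlet condition to build into the eigenbasis -- the periodic Fourier basis on $\Gamma$ is the natural one. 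Your final remark invoking the Lions--Magenes interpolation identity is a legitimate abstract shortcut that the paper does not take, but as you correctly observe it delivers the same conclusion in one line.
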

\begin{proof}
First, note that $g$ can easily be extended to $\mathbb{R}^2$ (also denoted as $g$) so that
\begin{eqnarray*}%%%%%%%%%%%%%
    \|g\|_{H^1(\mathbb{R};L^2(\mathbb{R}))} \leq C \|g\|_{H^1(0,T;L^2(\Gamma))}, \quad  \|g\|_{L^2(\mathbb{R};H^1(\mathbb{R}))} \leq C \|g\|_{L^2(0,T;H^1(\Gamma))}.
\end{eqnarray*}%%----------------------------%%
Denote as $\mathcal{F}_{t}$, $\mathcal{F}_x$ and $\mathcal{F}_{t,x}$ the Fourier transform w.r.t. variables $t$ and $x$ and both $t,x$, respectively. One has:
\begin{eqnarray*}%%%%%%%%%%%%%
     \|g\|_{H^\alpha(\mathbb{R};H^{1-\alpha}(\mathbb{R}))}^2 &\leq&  C\int_{\mathbb{R}} (1+\sigma^2)^\alpha || \mathcal{F}_t(g)||_{H^{1-\alpha}(\mathbb{R})}^2 ~d\sigma \\
     &\leq& C\int_{\mathbb{R}} (1+\sigma^2)^\alpha \int_\mathbb{R} (1+\xi^2)^{1-\alpha} |\mathcal{F}_x(\mathcal{F}_t(g))|^2~ d\xi  d\sigma \\
     &=& C\int_{\mathbb{R}^2}(1+\sigma^2)^\alpha (1+\xi^2)^{1-\alpha} |\mathcal{F}_{t,x} (g)|^2 ~ d\xi  d\sigma\\
     &\leq&  C\left(\int_{\mathbb{R}^2}(1+\sigma^2) |\mathcal{F}_{t,x} (g)|^2 ~ d\xi  d\sigma  \right)^{2\alpha } \left(\int_{\mathbb{R}^2}(1+\xi^2) |\mathcal{F}_{t,x} (g)|^2 ~ d\xi  d\sigma  \right)^{2(1-\alpha)} \\
     &=& C\|g\|_{H^1(\mathbb{R};L^2(\mathbb{R}))}^{2\alpha}\|g\|_{L^2(\mathbb{R};H^1(\mathbb{R}))}^{2(1-\alpha)},
\end{eqnarray*}%%----------------------------%%
where we used H\"{o}lder's inequality with indexes $p=\frac1\alpha$ and $q=\frac1{1-\alpha}$.

\end{proof}

We use test functions $(\boldsymbol\varphi, \psi) = (\eta\bb{e}_2,\eta)$ in \eqref{momeqweak}, we observe  that $\nabla\cdot(\eta\bb{e}_2) = 0$ and 
\begin{equation*}
    \int_{\Gamma_T}  \eta_{tx} \eta_x\di x \dd t= \frac12 \int_{\Gamma_T}  (\eta_{x}^2)_t\di x \dd t = 0.
\end{equation*}
Consequently,
\begin{multline}\label{eq:est_etaxx1}%%%%%%%%%%%%%
  \|\eta_{xx}\|^2_{L^2(\Gamma_T)} =  \int_{\Gamma_T}| \eta_{xx}  |^2 \di x \dd t \\
   = \int_{Q_T} \rho \bb{u} \cdot \eta_t  \bb{e}_2\di \bb{y} \dd t + \int_{Q_T} \rho \bb{u}\otimes \bb{u} :\nabla(\eta  \bb{e}_2)\di \bb{y} \dd t - \int_{Q_T} \mathbb{S}(\nabla \bb{u}):\nabla (\eta \bb{e}_2)\di \bb{y} \dd t +\int_{Q_T}\rho\eta \bb{e}_2\cdot \bb{F}\di \bb{y} \dd t \\
  \qquad + \int_{\Gamma_T} |\eta_t |^2\di x \dd t  +\int_{\Gamma_T} f  \eta \di x \dd t.
\end{multline}
 We fix $1<p<\overline{p}$, denote $q = \frac{p}{p-1}$ and estimate the terms on the right hand side as follows. First, 
\begin{equation*}
    \left|\int_{Q_T} \rho \bb{u} \cdot \eta_t  \bb{e}_2\di \bb{y} \dd t\right| \leq C\|\rho\|_{L^\infty(0,T; L^p(\Omega))}\|\bb{u}\|_{L^2(0,T;L^q(\Omega))} \|  \eta_t\|_{L^2(0,T;L^\infty(\Gamma))}\leq C(\kappa)\left(1+\mathcal{E}^{\frac{3\kappa}2}\right)
\end{equation*}
by using Sobolev embedding, \eqref{rho:est}, \eqref{diss:est} and \eqref{u:est}. In order to estimate the convective term, we utilize the following estimate
\begin{multline}%%%%%%%%%%%%%
    \|\eta_x\|_{L^\infty(0,T;L^{3q}(\Gamma))}\leq C\|\eta_{x}\|_{H^{\frac12+\delta}(0,T;H^{\frac12-\delta}(\Gamma))} \leq  C\|\eta_{x}\|_{H^1(0,T; L^2(\Gamma))}^{\frac12+\delta}\|\eta_{x}\|_{L^2(0,T; H^{1}(\Gamma))}^{\frac12-\delta}\\
    \leq C\big(\|\eta_{x}\|_{L^2(0,T; L^2(\Gamma))}^{\frac12+\delta}+\|\eta_{tx}\|_{L^2(0,T; L^2(\Gamma))}^{\frac12+\delta}\big)\|\eta_{x}\|_{L^2(0,T; H^1(\Gamma))}^{\frac12-\delta}\\
    =C \|\eta_{x}\|_{L^2(0,T; H^1(\Gamma))}+C\|\eta_{tx}\|_{L^2(0,T; L^2(\Gamma))}^{\frac12+\delta}\|\eta_{x}\|_{L^2(0,T; H^1(\Gamma))}^{\frac12-\delta}\\
    \leq C\big(\|\eta_{x}\|_{L^2(0,T; H^1(\Gamma))} + \|\eta_{tx}\|_{L^2(0,T; L^2(\Gamma))}\big). \label{eq:int_est}
\end{multline}%%----------------------------%%
Here $\delta>0$ is sufficiently small, we have used Sobolev embedding, Lemma \ref{l:interpol} and the Young inequality for exponents $(\frac{1}{2}+\delta)^{-1}$ and $(\frac{1}{2}-\delta)^{-1}$. We use this estimate to write 
\begin{multline*}
    \left|\int_{Q_T} \rho \bb{u}\otimes \bb{u} :\nabla(\eta  \bb{e}_2)\di \bb{y} \dd t \right| \leq C\|\rho\|_{L^\infty(0,T; L^p(\Omega)}\|\bb{u}\|_{L^2(0,T;L^{3q}(\Omega))}^2 \|\eta_x\|_{L^\infty(0,T;L^{3q}(\Gamma))} \\
    \leq C(\kappa)(1+\mathcal{E}^{\frac{3\kappa}2} )    \left(\|\eta_{x}\|_{L^2(0,T; H^1(\Gamma))} + \|\eta_{tx}\|_{L^2(0,T; L^2(\Gamma))}\right)
    \leq C(\kappa)\left( 1+\mathcal{E}^{3\kappa}\right) + \frac18 \|\eta_{xx}\|^2_{L^2(\Gamma_T)},
\end{multline*}
where we have used again \eqref{rho:est}, \eqref{diss:est}, \eqref{u:est},  and the Young inequality. The viscous term is estimated by
\begin{multline*}
    \left|\int_{Q_T} \mathbb{S}(\nabla \bb{u}):\nabla (\eta \bb{e}_2)\di \bb{y} \dd t\right| \leq C\|\mathbb{S}(\nabla \bb{u})\|_{L^2(Q_T)} \|\eta_x \|_{L^2(0,T;L^2(\Gamma))} \\
    \leq C\|\mathbb{S}(\nabla \bb{u})\|_{L^2(Q_T)}^2+\frac{1}{8}\|\eta_{xx}  \|_{L^2(\Gamma_T)}^2 \leq  C(\kappa)\left( 1+\mathcal{E}^{\kappa}\right) + \frac{1}{8}\|\eta_{xx}  \|_{L^2(\Gamma_T)}^2
\end{multline*}
using \eqref{diss:est}. We also use \eqref{diss:est} directly to estimate
\begin{equation*}
    \int_{\Gamma_T} |\eta_t|^2\di x \dd t \leq C(\kappa)\left( 1+\mathcal{E}^{\kappa}\right).
\end{equation*}
Finally, 
\begin{equation*}
    \left|\int_{Q_T} \rho\eta\bb{e}_2\cdot\bb{F}\di \bb{y} \dd t\right| \leq C\|\rho\|_{L^\infty(0,T; L^1(\Omega))}\|\eta\|_{L^2(0,T;L^\infty(\Gamma))}\|\bb{F}\|_{L^2(0,T;L^\infty(\Omega))}\leq C + \frac18 \|\eta_{xx}\|_{L^2(\Gamma_T)}^2
\end{equation*}
and
\begin{equation*}   
    \left|\int_{\Gamma_T} f \eta \di x \dd t \right| \leq \|f\|_{L^\infty(\Gamma_T)}\|\eta\|_{L^1(\Gamma_T)} \leq C + \frac18 \|\eta_{xx}\|_{L^2(\Gamma_T)}^2
\end{equation*}
by using the Poincar\'e inequality twice together with the boundary condition \eqref{boundaryconditions1}. All the estimates together with \eqref{eq:est_etaxx1} yield
\begin{equation}\label{eq:int_est_final}%%%%%%%%%%%%%
    \int_{\Gamma_T}| \eta_{xx}  |^2 \di x \dd t\leq  C(\kappa)(1+\mathcal{E}^{3\kappa}).
\end{equation}%%----------------------------%%

%and similarly with $(D_{-h}^s D_{h}^s\eta\bb{e}_2,D_{-h}^s D_{h}^s\eta)$
%\begin{eqnarray*}%%%%%%%%%%%%%
 % &&\int_{\Gamma_T}| D_{h}^s\Delta \eta|^2 + \underbrace{\int_{\Gamma_T} \partial_t D_{h}^s\eta_x  \cdot D_{h}^s\eta_x }_{=0} \\
  %&&= -\int_{Q_T^\eta} \rho \bb{u} \cdot D_{-h}^s D_{h}^s\eta_t  \bb{e}_2 - \int_{Q_T^\eta} \rho \bb{u}\otimes \bb{u} :(D_{-h}^s D_{h}^s\eta_x  \bb{e}_2) \\
  %&&+ \int_{Q_T^\eta} \mathbb{S}(\nabla \bb{u}):\nabla (D_{-h}^s D_{h}^s\eta \bb{e}_2)+ \int_{\Gamma_T} | D_{h}^s\eta_t |^2 \\
  %&& \leq \|\rho\|_{L^\infty(0,T; L^\gamma(\Omega^\eta(t))}\|\bb{u}\|_{L^2(0,T;L^6(\Omega^\eta(t)))} \|D_{-h}^s D_{h}^s\eta_t \|_{L^2(0,T;L^p(\Gamma))} \\
  %&&\quad + \|\rho\|_{L^\infty(0,T; L^\gamma(\Omega^\eta(t))}\|\bb{u}\|_{L^2(0,T;L^6(\Omega^\eta(t)))}^2 \|D_{-h}^s D_{h}^s\eta_x \|_{L^\infty(0,T;L^p(\Gamma))}\\
  %&&\quad + \|\mathbb{S}(\nabla \bb{u})\|_{L^2(Q_T^\eta)} \|\Delta \eta\|_{L^2(0,T;L^2(\Gamma))}+C\\
  %&&\leq CE^{\frac{1}{\gamma}}\|\eta_t \|_{L^2(0,T;W^{2s,p}(\Gamma))}^2 + CE^{\frac{1}{\gamma}} \|\eta_x \|_{L^\infty(0,T;W^{2s,p}(\Gamma))}+CE^{\frac12} \\
  %&&\leq CE^{\frac{1}{\gamma}}\|\eta_t _x \|_{L^2(0,T;L^2(\Gamma))}^2 + CE^{\frac{1}{\gamma}} \|\Delta \eta\|_{L^\infty(0,T;L^2(\Gamma))}+CE^{\frac12} \\
  %&&\leq C E^{\frac{1}{\gamma}}+ CE^{\frac{1}{\gamma}+\frac 1 2 }+ CE^{\frac12}
%\end{eqnarray*}%%----------------------------%%
%for $s,p$ such that...

\subsection{Part IV - density/pressure estimates}\label{Bog:section}
%{\color{red} Ondrej: We should be more precise here and emphasize whether the Bogovskii operator acts on a fixed domain or on a moving domain. If it is on fixed domain, we should be careful to explain how it relates to the moving domain situation. If it is on moving domain, the dependence of constants on time should be studied...
%However, this is probably possible to solve using compacts away from the boundary...}

Denote the Bogovskii operator as $\mathcal{B}_\Omega: L_0^p(\Omega) \to W_0^{1,p}(\Omega)$. This operator satisfies
\begin{equation*}%%%%%%%%%%%%%
    \nabla \cdot \mathcal{B}_\Omega[f] = f,
\end{equation*}%%----------------------------%%
where $L_0^p(\Omega):=\{f\in L^p(\Omega): \int_\Omega f = 0\}$ and $W^{1,p}_0(\Omega):=\{f\in W^{1,p}(\Omega): f{\restriction}_{\partial \Omega}=0\}$. Moreover, 
\begin{equation*}
    \|\mathcal{B}_\Omega[f]\|_{W^{1,p}(\Omega)} \leq C \|f\|_{L^p(\Omega)}.
\end{equation*}
Throughout the rest of this section, we will repeatedly use the following estimate. For $0<\alpha<\frac12$, we have
\begin{equation}\label{eq:Bogovskii_infty}%%%%%%%%%%%%%
    \left\|\mathcal{B}_\Omega\left[\rho^\alpha-\int_{\Omega}\rho^\alpha\di \bb{y}\right]\right\|_{L^\infty(\Omega)} \leq C\left\|\mathcal{B}_\Omega\left[\rho^\alpha-\int_{\Omega}\rho^\alpha\di \bb{y}\right]\right\|_{W^{1,\frac{1}{\alpha}}(\Omega)} \leq C \| \rho^\alpha\|_{L^{\frac1\alpha}(\Omega)} =C m_0^\alpha.
\end{equation}%%----------------------------
We cannot use $\mathcal{B}_\Omega[\rho^\alpha-\int_{\Omega}\rho^\alpha]$ as a test function $\boldsymbol\varphi$ in \eqref{momeqweak} since its trace on $\Gamma^\eta$ is not regular enough in general. Therefore, we split the procedure into estimates near the viscoelastic structure and estimates in the interior of the fluid domain. To this end we fix $0<h<\frac{{H}}{2}$ and we emphasize that constants appearing in the calculations below may depend on $h$. \\

 We shift to the moving domain $\Omega^\eta(t)$ and we deal with the  interior estimates first. Note that the function $\mathcal{B}_\Omega\left[\rho^\alpha-\int_{\Omega}\rho^\alpha\di \bb{y} \right]$ shifted to $\Omega^\eta(t)$ does not vanish on its boundary $\Gamma^\eta(t)$ and $\Gamma^\eta(t)+2H$. For that reason, we define a cut-off function
\begin{equation*}%%%%%%%%%%%%%
    \phi_h(t,x,z):=\begin{cases}
    \frac{z-\eta(t,x)}{h}, &\quad \text{ for } \eta(t,x)<z<\eta(t,x)+h,\\
    1, &\quad \text{ for } \eta(t,x)+h<z<\eta(t,x)+2{H}-h,\\
    \frac{2{H}+\eta(t,x)-z}{h}, &\quad \text{ for } \eta(t,x)+2{H}-h<z<\eta(t,x)+2{H},
    \end{cases}
\end{equation*}%%----------------------------%%
and 
\begin{equation}\label{eq:varphi_h_def}%%%%%%%%%%%%%
    \boldsymbol\varphi_h:=\phi_h\mathcal{B}_\Omega\left[\rho^\alpha-\int_{\Omega}\rho^\alpha\di \bb{y} \right],
\end{equation}%%----------------------------%%
where 
\begin{equation*}%\label{alphacond}
0<\alpha:=\min\left\{\frac25,\frac{\gamma-1}{2}\right\}
\end{equation*}
is fixed from now on. We emphasize that this choice of $\alpha$ ensures $\alpha < \frac12$, so we can use the estimate \eqref{eq:Bogovskii_infty}. Moreover due to \eqref{epcond} it holds
\begin{equation}\label{eq:alpha1}
   \frac32 \kappa(\gamma-1) < \alpha < \gamma - 1-\kappa\gamma,
\end{equation}
which will be important later.

We test the coupled momentum equation \eqref{momeqweak} by $(\boldsymbol\varphi_h,0)$ to obtain
\begin{multline}\label{eq:mainBog}%%%%%%%%%%%%%
    \int_{Q_T^\eta} \rho^{\gamma+\alpha}\phi_h\di \bb{y} \dd t    = \int_{Q_T^\eta} \rho^{\gamma} \left(\int_{\Omega^\eta(t)} \rho^\alpha(t)\di \bb{y} \right)\phi_h\di \bb{y}\dd t\\
    -     \int_{Q_T^\eta} \rho^{\gamma}\left(\mathcal{B}_\Omega\left[\rho^\alpha-\int_{\Omega}\rho^\alpha\di \bb{y}\right] \cdot \nabla\phi_h \right)\di \bb{y} \dd t 
    - \int_{Q_T^\eta} \rho \bb{u}\cdot \partial_t \boldsymbol\varphi_h \di \bb{y} \dd t\\
     - \int_{Q_T^\eta} \rho \bb{u}\otimes \bb{u} : \nabla \boldsymbol\varphi_h \di \bb{y} \dd t + \int_{Q_T^\eta} \mathbb{S}(\nabla \bb{u}): \nabla \boldsymbol\varphi_h\di \bb{y} \dd t - \int_{Q_T^\eta} \rho \bb{F} \cdot \boldsymbol\varphi_h\di \bb{y} \dd t.
\end{multline}%%----------------------------%%
We proceed to bound the terms on the right-hand side. Notice that
\begin{equation*}
    \int_{\Omega^\eta(t)} \rho^\alpha(t)\di \bb{y} \leq \left(\int_{\Omega^\eta(t)} \rho(t)\di \bb{y}\right)^\alpha |\Omega^\eta(t)|^{1-\alpha} \leq Cm_0^\alpha
\end{equation*}
and therefore
\begin{equation}\label{eq:Bogest11}%%%%%%%%%%%%%
    \int_{Q_T^\eta} \rho^{\gamma} \left(\int_{\Omega^\eta(t)} \rho^\alpha(t)\di \bb{y}\right)\phi_h \di \bb{y} \dd t\leq C\mathcal{E} m_0^\alpha.
\end{equation}%%----------------------------%%
Moreover,
\begin{multline}\label{eq:Bogest12}%%%%%%%%%%%%%
    \left|\int_{Q_T^\eta} \rho^{\gamma}\mathcal{B}_\Omega\left[\rho^\alpha-\int_{\Omega}\rho^\alpha\di \bb{y}\right] \cdot \nabla\phi_h \right|\di \bb{y} \dd t\\
    \leq C \int_0^T  \|\rho^\gamma\|_{L^1(\Omega^\eta(t))} \left\|\mathcal{B}_\Omega\left[\rho^\alpha-\int_{\Omega}\rho^\alpha\di x\right]\right\|_{L^\infty(\Omega)} (1 + \| \eta_x \|_{L^\infty(\Gamma)})\di t \\
    \leq C \|\rho^\gamma\|_{L^\infty(0,T;L^1(\Omega^\eta(t)))}m^\alpha \left(1 + \| \eta\|_{L^2(0,T;H^{2}(\Gamma))}\right) 
     \leq  C(\kappa)\left(\mathcal{E}^{1+\frac{3\kappa}2} \right).
\end{multline}%%----------------------------%%
In order to estimate the third term on the right hand side of \eqref{eq:mainBog}, we fix $1 < p < \overline{p}$ and $q > 1$ such that $\frac1\gamma+\frac1q+\frac1p=1$. Since the Bogovskii operator commutes with the derivative with respect to time, we deduce
\begin{multline*}
    \partial_t \boldsymbol\varphi_h = \phi_h \partial_t \mathcal{B}_\Omega\left[\rho^\alpha-\int_{\Omega^\eta(t)}\rho^\alpha\di \bb{y}\right] + \partial_t \phi_h \mathcal{B}_\Omega\left[\rho^\alpha-\int_{\Omega^\eta(t)}\rho^\alpha\di \bb{y}\right] \\
     = \phi_h \mathcal{B}_\Omega\left[\partial_t \left( \rho^\alpha-\int_{\Omega^\eta(t)}\rho^\alpha\di \bb{y}\right)\right] +\partial_t \phi_h \mathcal{B}_\Omega\left[\rho^\alpha-\int_{\Omega^\eta(t)}\rho^\alpha\di \bb{y}\right].
\end{multline*}
The continuity equation implies
\begin{equation*}%%%%%%%%%%%%%
  \partial_t \rho^\alpha = - \nabla\cdot (\rho^\alpha \bb{u}) + (1 - \alpha)\rho^\alpha \nabla\cdot \bb{u}  
\end{equation*}%%----------------------------%%
which is used to estimate
\begin{multline*}
 \left\|\mathcal{B}_\Omega\left[\partial_t \rho^\alpha-\partial_t\int_{\Omega^\eta(t)}\rho^\alpha\di \bb{y}\right]\right\|_{L^2(0,T;L^p(\Omega^\eta(t)))} \\
= \left\|\mathcal{B}_\Omega\left[ \nabla\cdot ( \rho^\alpha \bb{u}) + (\alpha-1)\rho^\alpha \nabla\cdot \bb{u} - (\alpha-1)\left(\int_{\Omega^\eta(t)}\rho^\alpha \nabla\cdot \bb{u}\di \bb{y} \right)\right]   \right\|_{L^2(0,T;L^p(\Omega^\eta(t)))} \\
\leq \|\rho^\alpha \bb{u}\|_{L^2(0,T;L^p(\Omega^\eta(t)))} + C \|\mathcal{B}_\Omega[\rho^\alpha \nabla\cdot \bb{u}] \|_{L^2(0,T;L^p(\Omega^\eta(t)))} \\
\leq \|\rho^\alpha \bb{u}\|_{L^2(0,T;L^p(\Omega^\eta(t)))} + C \|\rho^\alpha \nabla\cdot \bb{u} \|_{L^2(0,T;L^{r}(\Omega^\eta(t)))} \\
\leq \|\rho^\alpha\|_{L^\infty(0,T;L^\frac{\gamma}{\alpha}(\Omega^\eta(t)))} \| \bb{u}\|_{L^2(0,T;L^{\frac{p\gamma}{\gamma-\alpha p}}(\Omega^\eta(t)))} + C \|\rho^\alpha\|_{L^\infty(0,T;L^\frac{\gamma}{\alpha}(\Omega^\eta(t)))} \|\nabla \cdot\bb{u} \|_{L^2(Q_T^\eta)} \\
\leq C(\kappa)\left(1+\mathcal{E}^{\frac{\alpha}{\gamma}+\frac\kappa2}\right), 
\end{multline*}
where $r = \max\{1,\frac{2p}{2+p}\}$. Since
\begin{equation*}
    \partial_t \phi_h = -\frac{1}{h} \eta_t
\end{equation*}
on the set where it is not zero, it holds that
\begin{multline}\label{eq:Bogest13}%%%%%%%%%%%%%
    \left|\int_{Q_T^\eta} \rho \bb{u}\cdot \partial_t \boldsymbol\varphi_h\di \bb{y} \dd t\right| \leq \|\rho\|_{L^\infty(0,T;L^\gamma(\Omega^\eta(t)))} \|\bb{u}\|_{L^2(0,T;L^q(\Omega^\eta(t)))} \| \partial_t \boldsymbol\varphi_h \|_{L^2(0,T;L^p(\Omega^\eta(t)))} \\ 
    \leq C(\kappa)\left(1+ \mathcal{E}^{\frac{1}{\gamma}+\frac\kappa2}\right) \left( \| \phi_h\|_{L^\infty(Q_T^\eta)} \mathcal{E}^{\frac{\alpha}{\gamma}+\frac{\kappa}{2}} + \| \eta_t\|_{L^2(0,T;L^p(\Gamma))} m_0^\alpha \right) \leq C(\kappa)\left(1+ \mathcal{E}^{\frac{1}{\gamma}+\frac{\alpha}{\gamma}+\kappa}\right).
\end{multline}%%----------------------------%%
We continue with the fourth term on the right hand side of \eqref{eq:mainBog}. Here we take $q = \frac{2\gamma}{\gamma-1-\alpha}$ and deduce
\begin{multline*}%%%%%%%%%%%%%
    \left|\int_{Q_T^\eta} \rho \bb{u}\otimes \bb{u} : \nabla \boldsymbol\varphi_h\di \bb{y} \dd t\right| \leq \|\rho\|_{L^\infty(0,T;L^\gamma(\Omega^\eta(t)))} \|\bb{u}\|_{L^2(0,T;L^q(\Omega^\eta(t)))}^2 \|\nabla \boldsymbol\varphi_h\|_{L^\infty(0,T;L^{\frac{\gamma}{\alpha}}(\Omega^\eta(t)))}\\
    \leq C(\kappa)\mathcal{E}^{\frac{1}{\gamma}+\kappa} \left( \left\|\nabla\mathcal{B}_\Omega\left[\rho^\alpha-\int_{\Omega}\rho^\alpha\di \bb{y}\right]\right\|_{L^\infty(0,T;L^{\frac{\gamma}{\alpha}}(\Omega^\eta(t)))} + \|\nabla\phi_h\|_{L^\infty(0,T;L^{\frac{\gamma}{\alpha}}(\Omega^\eta(t)))} m_0^\alpha  \right)\\
    \qquad \leq C(\kappa)\left(1+\mathcal{E}^{\frac{1}{\gamma}+\kappa})\big(  \|\rho^\alpha\|_{L^\infty(0,T;L^{\frac{\gamma}{\alpha}}(\Omega^\eta(t)))} + 1 + \|\eta_x \|_{L^\infty(0,T;L^{\frac{\gamma}{\alpha}}(\Gamma) )}  \right)\\
    \leq C(\kappa)(1+\mathcal{E}^{\frac{1}{\gamma}+\kappa})\Big( \|\rho^\alpha\|_{L^\infty(0,T;L^{\frac{\gamma}{\alpha}}(\Omega^\eta(t)))} + 1+\|\eta_{x}\|_{L^2(0,T; H^1(\Gamma))} + \|\eta_{tx}\|_{L^2(0,T; L^2(\Gamma))} \Big)\\
    \leq C(\kappa)(1+\mathcal{E}^{\frac{1}{\gamma}+\kappa})\Big(1+\mathcal{E}^{\frac{\alpha}{\gamma}}+ \mathcal{E}^{\frac{3\kappa}2} \Big)
    \leq C(\kappa)\left(1 + \mathcal{E}^{\frac{1+\alpha}\gamma+\kappa}+ \mathcal{E}^{\frac{1}\gamma +\frac{5\kappa}2}\right)
\end{multline*}%%----------------------------%%
by \eqref{eq:int_est} and \eqref{eq:int_est_final}. The elliptic term satisfies
\begin{multline*}%%%%%%%%%%%%%
    \left|\int_{Q_T^\eta} \mathbb{S}(\nabla \bb{u}): \nabla \boldsymbol\varphi_h\di \bb{y} \dd t\right| \leq \|\mathbb{S}(\nabla \bb{u})\|_{L^2(0,T;L^2(\Omega^\eta(t)))} \|\nabla \boldsymbol\varphi_h\|_{L^2(0,T;L^{\frac{\gamma}{\alpha}}(\Omega^\eta(t)))}\\
    \leq C(\kappa)\left(1+\mathcal{E}^{\frac\kappa2}\right)\left( \left\|\nabla\mathcal{B}_\Omega\left[\rho^\alpha-\int_{\Omega}\rho^\alpha\di \bb{y}\right]\right\|_{L^2(0,T;L^{\frac{\gamma}{\alpha}}(\Omega^\eta(t)))} + \|\nabla\phi_h\|_{L^2(0,T;L^{\frac{\gamma}{\alpha}}(\Omega^\eta(t)))} m_0^\alpha  \right)\\
    \leq C(\kappa)\left(1+\mathcal{E}^{\frac\kappa2}\right)\left(  \|\rho^\alpha\|_{L^\infty(0,T;L^{\frac{\gamma}{\alpha}}(\Omega^\eta(t)))} + (1+ \|\eta_x \|_{L^2(0,T;L^{\frac{\gamma}{\alpha}}(\Gamma) )})   \right)\\
    \leq C(\kappa)\left(1+\mathcal{E}^{\frac\kappa2}\right)\left( 1+ \|\rho^\alpha\|_{L^\infty(0,T;L^{\frac{\gamma}{\alpha}}(\Omega^\eta(t)))} + \|\eta_{xx} \|_{L^2(0,T;L^2(\Gamma) )}   \right)
    \leq C(\kappa)(1+\mathcal{E}^{\frac{\alpha}\gamma +\frac{\kappa}2} + \mathcal{E}^{2\kappa}).
\end{multline*}%%----------------------------%%
Finally, 
\begin{equation*}%%%%%%%%%%%%%
    \left|\int_{Q_T^\eta} \rho \bb{F} \cdot \boldsymbol\varphi_h\di \bb{y} \dd t\right| \leq C\|\rho\|_{L^\infty(0,T;L^1(\Omega^\eta(t)))} \|\bb{F}\|_{L^2(0,T;L^\infty(\Omega^\eta(t)))} \|\boldsymbol\varphi_h\|_{L^\infty(Q_T^\eta)} \leq Cm_0^{1+\alpha} \leq C.
\end{equation*}%%----------------------------%%

We observe that due to \eqref{epcond} and \eqref{eq:alpha1} the largest power of $\mathcal{E}$ in all of the above estimates is $\mathcal{E}^{1+\frac{3\kappa}2}$. We combine these estimates to get
\begin{equation}\label{improved:int:est}%%%%%%%%%%%%%
    \int_0^T \int_{\{\eta+h<z<\eta+2H-h\}} \rho^{\gamma+\alpha}\di \bb{y} \dd t\leq  \int_{Q_T^\eta} \rho^{\gamma+\alpha}\phi_h \di \bb{y} \dd t\leq C(\kappa)\left(1+\mathcal{E}^{1+\frac{3\kappa}2}\right),
\end{equation}%%----------------------------%%
which then gives us by the interpolation of Lebesgue spaces
\begin{multline*}%%%%%%%%%%%%%
    \left(\int_0^T \int_{\{\eta+h<z<\eta+2H-h\}} \rho^{\gamma}\di \bb{y} \dd t\right)^{\frac1\gamma}\\
    \leq \left( \int_0^T \int_{\{\eta+h<z<\eta+2H-h\}} \rho^{\gamma+\alpha}\di \bb{y} \dd t\right)^{\frac{\theta}{\gamma+\alpha}} \left( \int_0^T \int_{\{\eta+h<z<\eta+2H-h\}} \rho\di \bb{y} \dd t\right)^{1-\theta}\\
    \leq C(\kappa)(1+\mathcal{E}^{1+\frac{3\kappa}2})^{\frac{\theta}{\gamma+\alpha}}m_0^{1-\theta},
\end{multline*}%%----------------------------%%
where
\begin{equation*}%%%%%%%%%%%%%
    \theta=\frac{(\gamma-1)(\gamma+\alpha)}{(\gamma+\alpha-1)\gamma}.
\end{equation*}%%----------------------------%%
The choice of $\kappa$ and $\alpha$ which satisfy \eqref{epcond} and \eqref{eq:alpha1} ensures that
% there exists $\kappa' > 0$ such that 
%Since $\gamma > 1$ and $\alpha > 0$ we have $\frac{\theta}{\gamma + \alpha} = \frac{\gamma-1}{\gamma(\gamma+\alpha-1)} < 1$ so by choosing $\varepsilon$ small enough, we have
\begin{equation}\label{eps:epsprime}%%%%%%%%%%%%%
\left(1+\frac{3\kappa}2\right)\frac{\gamma\theta}{\gamma + \alpha} = \left(1+\frac{3\kappa}2\right)\frac{\gamma-1}{\gamma+\alpha-1} <1.
\end{equation}%%----------------------------%%
We define
\begin{equation*}%\label{epsprimedef}
    \kappa':= 1 - \left(1+\frac{3\kappa}2\right)\frac{\gamma-1}{\gamma+\alpha-1}
\end{equation*}
which yields 
\begin{equation}%%%%%%%%%%%%%
    \int_0^T \int_{\{\eta+h<z<\eta+2H-h\}} \rho^{\gamma}\di \bb{y} \dd t \leq C(\kappa)\left(1+\mathcal{E}^{1-\kappa'}\right)\label{interest}.
\end{equation}%%----------------------------%%
 
Next, we deal with the near boundary estimates. Recall that we have fixed $0<h<\frac{{H}}{2}$. This time we define
\begin{equation}\label{eq:varphi_h_def2}%%%%%%%%%%%%%
    \varphi_h(t,x,z):=\begin{cases}
    z-\eta(t,x), &\quad \text{ for } \eta(t,x)<z<\eta(t,x)+h,\\
    -\frac{h}{H-h}(z-(\eta(t,x)+h))+h, &\quad \text{ for } \eta(t,x)+h<z<\eta(t,x)+2{H}-h,\\
    z-(\eta(t,x)+2H), &\quad \text{ for } \eta(t,x)+2{H}-h<z<\eta(t,x)+2{H}.
    \end{cases}
\end{equation}%%----------------------------%%
Note that for fixed $(t,x)$, $\varphi_h(t,x,z)$ is piecewise linear in the $z$ variable with slope equal to $1$ near the boundary of the domain. We choose $(\boldsymbol\varphi,\psi)=(\varphi_h\bb{e}_2,0)$ as test functions in \eqref{momeqweak} to obtain 
\begin{multline}\label{eq:Bogbdry}%%%%%%%%%%%%%
    \int_0^T \int_{\{\eta<z<\eta+h\}\cup\{\eta+2H-h<z<\eta+2H\}} \rho^{\gamma}\di \bb{y} \dd t\\ 
    = \frac{h}{H-h}\int_0^T \int_{\{\eta+h<z<\eta+2H-h\}} \rho^{\gamma}\di \bb{y} \dd t-\int_{Q_T^\eta} \rho \bb{u}\cdot \partial_t (\varphi_h\bb{e}_2)\di \bb{y} \dd t  \\ 
     - \int_{Q_T^\eta} \rho \bb{u}\otimes \bb{u} : \nabla (\varphi_h\bb{e}_2) \di \bb{y} \dd t + \int_{Q_T^\eta} \mathbb{S}(\nabla \bb{u}): \nabla (\varphi_h\bb{e}_2)\di \bb{y} \dd t - \int_{Q_T^\eta} \rho\bb{F}\cdot(\varphi_h\bb{e}_2)\di \bb{y} \dd t .
\end{multline}%%----------------------------%%
We use \eqref{interest} to bound the first term on the right hand side. In order to bound the remaining terms, we use similar estimates as in the case of the interior estimates. In fact, the estimates are now more simple as there are no terms with the Bogovskii operator and the derivatives act directly on the function $\varphi_h$ and consequently on $\eta$. Therefore we obtain
%{\footnote{S: probably exponent are same as in previous remarks of Ondrej}}
\begin{multline}%%%%%%%%%%%%%
    \int_0^T \int_{\{\eta<z<\eta+h\}\cup\{\eta+2H-h<z<\eta+2H\}} \rho^{\gamma}\di \bb{y} \dd t
    \leq C(\kappa)\left(1+\mathcal{E}^{1-\kappa'}\right)+C(\kappa)\left(1+ \mathcal{E}^{\frac{1}{\gamma}+\frac{\alpha}{\gamma}+\kappa}\right)\\
    +C(\kappa)\left(1 + \mathcal{E}^{\frac{1+\alpha}\gamma+\kappa}+ \mathcal{E}^{\frac{1}\gamma +\frac{5\kappa}2}\right)
+C(\kappa)\left(1+\mathcal{E}^{\frac{\alpha}\gamma +\frac{\kappa}2} + \mathcal{E}^{2\kappa}\right) 
    \leq C(\kappa)\left(1+\mathcal{E}^{1-\kappa''}\right),
    \label{bndest}
\end{multline}%%----------------------------%%
where 
\begin{equation}\label{epprpr}
    \kappa'' := \min\left\{\kappa',1-\kappa-\frac{1+\alpha}{\gamma},1-\frac{1}{\gamma} - \frac{5\kappa}{2}\right\}.
\end{equation}
The conditions \eqref{epcond} and \eqref{eq:alpha1} ensure that $\kappa'' > 0$. We sum up \eqref{interest} and  \eqref{bndest} and we go back to $\Omega$ to finally deduce
\begin{equation*}%%%%%%%%%%%%%
    \int_{Q_T} \rho^{\gamma}\di \bb{y} \dd t \leq C(\kappa)\left(1+\mathcal{E}^{1-\kappa''}\right),
\end{equation*}%%----------------------------%%
where $\kappa$ and $\kappa''$ are related through \eqref{epprpr}.
%%%%%%%%%%%%
%%%%%%%%%%%%
%%%%%%%%%%%%
%%%%%%%%%%%%
\subsection{Part V - closing the estimates}\label{35sec}
We notice that for $q = \frac{2\gamma}{\gamma-1}$
\begin{equation*}%%%%%%%%%%%%%
    \int_{Q_T} \rho |\bb{u}|^2\di \bb{y} \dd t\leq C \|\rho\|_{L^\infty(0,T;L^\gamma(\Omega))} \|\bb{u}\|_{L^2(0,T;L^q(\Omega))}^2 \leq C(\kappa)\left(1+\mathcal{E}^{\frac{1}{\gamma}+\kappa}\right).
\end{equation*}%%----------------------------%%
Since $\frac{1}{\gamma} + \kappa < 1-\kappa''$ we finally obtain by previous estimates
\begin{equation*}%\label{eq:CIRC2} %%%%%%%%%%%%%
    \int_0^T E(s)\di s \leq C(\kappa)\left(1+\mathcal{E}^{1-\kappa''}\right) \leq C(\delta_0)+\delta_0 \mathcal{E}
\end{equation*}%%----------------------------%%
for any $\delta_0 > 0$. 
%Note that we have fixed $\kappa$ small, which means that $C(\delta_0)$ is large. 
This together with \eqref{eq:CIRC1} yields
\begin{equation*}%%%%%%%%%%%%%
    \mathcal{E} \leq C_0\left(1 + \int_0^T E(s)\di s\right) \leq C_0(1 + \delta_0\mathcal{E} + C(\delta_0))
\end{equation*}%%----------------------------%%
%and $\delta_0$ is chosen small enough with respect to the constant $C_0$ appearing in \eqref{eq:CIRC1} 
and, consequently, 
\begin{equation*}
    \mathcal{E} \leq C,
\end{equation*}
where $C$ depends on $f,\mathbf{F},m_0,L,H$, $h$ and the choice of $\kappa$. However, we can choose $h = \frac{H}{4}$, and the choice of $\kappa$ depends only on the value of $\gamma$ so the constant $C$ in the end depends only on $f,\mathbf{F},m_0,\gamma, L$ and $H$, i.e. the given data and parameters of the problem.

\section{Approximate decoupled problem}

We introduce the orthogonal basis of $L_\#^2(0,T)$ denoted by $\{\tau_i(t)\}_{i\in \mathbb{N}\cup \{0\}}$, more precisely we set for $k \in \mathbb{N}\cup \{0\}$
\begin{equation*}%%%%%%%%%%%%%
    \tau_{2k}(t)=\cos\left(\frac{2\pi kt}{T}\right),\qquad \tau_{2k+1}(t)=\sin\left(\frac{2\pi kt}{T}\right).
\end{equation*}%%----------------------------%%
We denote by $\{s_i(x)\}_{i\in \mathbb{N}}$ the orthogonal basis of $H^1_{\#,0}(\Gamma)\cap H_\#^2(\Gamma)$ and by $\{\bb{f}_i(x,z)\}_{i\in \mathbb{N}}$ the orthogonal basis of $H_\#^1(\Omega)$. We define finite-dimensional spaces
\begin{equation*}%%%%%%%%%%%%%
\begin{split}
    \mathcal{P}_{n,m}^{str} &:= {\rm span}\{s_i(x)\tau_j(t)\}_{1\leq i\leq n, 0\leq j\leq 2m}, \\ \mathcal{P}_{n,m}^{fl} &:= {\rm span}\{\bb{f}_i(x,z)\tau_j(t)\}_{1\leq i\leq n,0\leq j\leq 2m}.
\end{split}
\end{equation*}%%----------------------------%%
We fix $m,n \in \mathbb{N}$, we introduce parameters $\ep > 0$ and $\delta > 0$, and we fix $a \geq 5$. Here, $\ep$ denotes the artificial diffusion in the continuity equation, but also denotes the penalization parameter between the trace of the fluid velocity field on the viscoelastic beam and the velocity of the beam itself. The parameter $\delta$ then denotes an artificial pressure coefficient $\delta\rho^a$ in the momentum equation and it appears in other artificial terms which help us to get good estimates at the beginning of the proof but have to disappear from the equations later. 

We are ready to present the approximate decoupled and penalized problem which is the starting point of our existence proof. We fix $\beta \in (0,1)$, our goal is to find $\rho \in C_\#^{0,\beta}(0,T; C_\#^{2,\beta}(\Omega))\cap C_\#^{1,\beta}(0,T; C_\#^{0,\beta}(\Omega))$, $\bb{u} \in \mathcal{P}_{n,m}^{fl}$ and $\eta \in \mathcal{P}_{n,m}^{str}$ which satisfy the following identities.
\begin{enumerate}
    \item The \textbf{structure momentum equation}
    \begin{equation}\label{APP1}%%%%%%%%%%%%%
         \int_{\Gamma_T} \eta_t  \psi_t\di x \dd t -    \int_{\Gamma_T} \eta_{xx}   \psi_{xx}\di x \dd t -\int_{\Gamma_T}  \eta_{tx}  \psi_x\di x \dd t -\int_{\Gamma_T}\frac{\eta_t  - \bb{v}\cdot\bb{e}_2}\varepsilon     \psi\di x \dd t=-\int_{\Gamma_T}f\psi\di x \dd t
    \end{equation}%%----------------------------%%
holds for all $\psi \in \mathcal{P}_{n,m}^{str}$, where $\bb{v}=\gamma_{|\hat{\Gamma}^\eta}\bb{u}$.
\item The \textbf{damped continuity equation}
\begin{equation}\label{APP2}%%%%%%%%%%%%%
    \partial_t \rho + \nabla \cdot (\rho \bb{u}) -\varepsilon\Delta \rho +\varepsilon \rho=\varepsilon M,
\end{equation}%%----------------------------%%
complemented with periodic boundary conditions for $\rho$ holds in the classical sense in $\Omega$, where $M = \frac{m_0}{|\Omega|}$.
\item The \textbf{fluid momentum equation}
\begin{multline}%%%%%%%%%%%%%
    \delta\int_{Q_T}\bb{u}\cdot\partial_t\boldsymbol\varphi \di \bb{y} \dd t + \int_{Q_T} \rho \bb{u} \cdot \partial_t \boldsymbol\varphi\di \bb{y} \dd t + \int_{Q_T} \rho \bb{u}\otimes \bb{u}:\nabla\boldsymbol\varphi\di \bb{y} \dd t +  \int_{Q_T} (\rho^\gamma+\delta\rho^a) \nabla\cdot\boldsymbol\varphi\di \bb{y} \dd t \\
    - \int_{Q_T} \mathbb{S}(\nabla \bb{u}):\nabla\boldsymbol\varphi\di \bb{y} \dd t
    -\delta \int_{Q_T} |\bb{u}|^2 \bb{u}\cdot\boldsymbol\varphi\di \bb{y} \dd t-\varepsilon\int_{Q_T}\nabla \rho\otimes\boldsymbol\varphi:\nabla \bb{u}\di \bb{y} \dd t\\
    + \frac\varepsilon2\int_{Q_T}(M-\rho)\bb{u}\cdot\boldsymbol\varphi\di \bb{y} \dd t
    -\int_{\Gamma_T}\frac{\bb{v}-\eta_t\bb{e}_2 }\varepsilon \cdot \boldsymbol\psi\di x \dd t=-\int_{Q_T}\rho\bb{F}_\delta\cdot\boldsymbol\varphi\di \bb{y} \dd t,
    \label{APP3}
\end{multline}%%----------------------------%%
holds for all $\boldsymbol\varphi\in \mathcal{P}_{n,m}^{fl}$, where $\boldsymbol\psi=\gamma_{|\hat{\Gamma}^\eta}\boldsymbol\varphi$ and $\bb{v}=\gamma_{|\hat{\Gamma}^\eta}\bb{u}$. Here $\bb{F}_\delta$ denotes a smooth approximation of $\bb{F}$.
\end{enumerate}

\subsection{Uniform estimates}%\label{estimatesapp}
We derive the uniform estimates for solutions to the approximate problem \eqref{APP1}-\eqref{APP3}. We choose $\psi=\eta_t$ in \eqref{APP1}, multiply \eqref{APP2} with $\frac{\gamma}{\gamma-1}\rho^{\gamma-1}$, then $\frac{\delta a}{a-1}\rho^{a-1}$ and $\frac{1}{2}|\bb{u}|^2$ and finally choose $\boldsymbol\varphi=\bb{u}$ in \eqref{APP3}, and then sum up these identities to obtain
\begin{multline}%%%%%%%%%%%%%
     \int_{Q_T} \mathbb{S}(\nabla \bb{u}):\nabla \bb{u}\di \bb{y} \dd t+\delta \int_{Q_T}|\bb{u}|^4\di \bb{y} \dd t+\int_{\Gamma_T}|\eta_{tx} |^2\di x \dd t + \varepsilon\gamma\int_{Q_T} \rho^{\gamma-2}|\nabla \rho|^2\di \bb{y} \dd t 
    \\
    + \frac{\varepsilon\gamma}{\gamma-1} \int_{Q_T}\rho^{\gamma}\di \bb{y} \dd t 
     + \varepsilon\delta a\int_{Q_T} \rho^{a-2}|\nabla \rho|^2\di \bb{y} \dd t +\frac{\varepsilon\delta a}{a-1} \int_{Q_T}\rho^{a}\di \bb{y} \dd t
     + \frac1{\varepsilon}\int_{\Gamma_T} |\bb{v} - \eta_t\bb{e}_2|^2\di x \dd t \\
     = \int_{\Gamma_T} f\eta_t\di x \dd t  + \int_{Q_T}\rho\bb{u}\cdot\bb{F}_\delta\di \bb{y} \dd t+ \varepsilon\int_{Q_T} M\frac{\gamma}{\gamma-1}\rho^{\gamma-1}\di \bb{y} \dd t+\varepsilon\delta\int_{Q_T} M\frac{a}{a-1}\rho^{a-1}\di \bb{y} \dd t\\
      \leq \|f\|_{L^2(\Gamma_T)}\|\eta_{t}\|_{L^2(\Gamma_T)}+ C\|\rho\|_{L^{a}(Q_T)}\|\bb{u}\|_{L^4(Q_T)}\|\bb{F}_\delta\|_{L^\infty(Q_T)}\\
      + \frac{\varepsilon\gamma}{4(\gamma-1)} \|\rho\|_{L^{\gamma}(Q_T)}^{\gamma}+ \frac{\varepsilon\delta a}{4(a-1)}\|\rho\|_{L^{a}(Q_T)}^{a}+C(\varepsilon,\delta) \\
      \leq C(\varepsilon,\delta)+\frac12 \int_{\Gamma_T}|\eta_{tx} |^2\di x \dd t + \frac{\varepsilon\gamma}{4(\gamma-1)}\|\rho\|_{L^{\gamma}(Q_T)}^{\gamma}+\frac{\varepsilon\delta a}{2(a-1)}\|\rho\|_{L^{a}(Q_T)}^{a}+\frac\delta2\|\bb{u}\|_{L^4(Q_T)}^4,\label{appen1}
\end{multline}%%----------------------------%%
where we used
\begin{multline*}%%%%%%%%%%%%%
    \|\rho\|_{L^{a}(Q_T)}\|\bb{u}\|_{L^4(Q_T)}\|\bb{F}_\delta\|_{L^\infty(Q_T)} \leq C\|\rho\|_{L^{a}(Q_T)}\|\bb{u}\|_{L^4(Q_T)}\\
    \leq\frac{\varepsilon\delta a}{4(a-1)}\|\rho\|_{L^{a}(Q_T)}^{a}+\frac\delta2\|\bb{u}\|_{L^4(Q_T)}^4+ C(\varepsilon,\delta)
\end{multline*}%%----------------------------%%
which follows from the Young inequality. Some terms on the right hand side of \eqref{appen1} might be absorbed in the left hand side and thus we deduce
\begin{multline*}%%%%%%%%%%%%%
    \int_{Q_T} \mathbb{S}(\nabla \bb{u}):\nabla \bb{u}\di \bb{y} \dd t+\delta \int_{Q_T}|\bb{u}|^4\di \bb{y} \dd t+\int_{\Gamma_T}|\eta_{tx} |^2\di x \dd t+ \varepsilon\gamma\int_{Q_T} \rho^{\gamma-2}|\nabla \rho|^2\di \bb{y} \dd t+ \frac{\varepsilon\gamma}{\gamma-1} \int_{Q_T}\rho^{\gamma}\di \bb{y} \dd t\\% \nonumber\\
     + \varepsilon\delta a\int_{Q_T} \rho^{a-2}|\nabla \rho|^2\di \bb{y} \dd t+\frac{\varepsilon\delta a}{a-1} \int_{Q_T}\rho^{a}\di \bb{y} \dd t
      + \frac1{\varepsilon}\int_{\Gamma_T} |\bb{v} - \eta_t\bb{e}_2|^2 \di x \dd t \leq C(\varepsilon,\delta).%\label{appen11}.
\end{multline*}%%----------------------------%%
Next, we integrate \eqref{APP2} over $\Omega$ to deduce
\begin{equation*}%\label{mass1.0}%%%%%%%%%%%%%
    \frac{d}{dt}\int_{\Omega} \rho(t)\di \bb{y} + \ep \int_{\Omega} \rho(t)\di \bb{y} = \ep m_0,  
\end{equation*}%%----------------------------%%
which yields the only time-periodic solution
\begin{equation*}%\label{mass1.0}%%%%%%%%%%%%%
    \int_{\Omega} \rho(t)\di \bb{y} = m_0.  
\end{equation*}%%----------------------------%%

% The following estimate is problematic without the knowledge of $\rho\bb{u}^2 \in L^\infty(L^2)$, which we do not have at this moment. I suggest to solve it by adding $|\bb{u}|^2\bb{u}\cdot\boldsymbol{\varphi}$ to the momentum equation in order to obtain $L^4$ integrability of $\bb{u}$ in time. Seems to me that Eduard and coauthors made a mistake in their papers when claiming that this follows directly.

Further estimates of density are deduced by the $L^p - L^q$ theory for parabolic equations applied to the continuity equation \eqref{APP2}. To this end, we  estimate the term 
\begin{equation*}
\nabla \cdot (\rho \bb{u}) = \rho \nabla \cdot \bb{u} + \bb{u}\cdot \nabla \rho
\end{equation*}
in $L^p(0,T,L^q(\Omega))$ using the information we already have. The term $\rho\nabla\cdot\bb{u}$ is easy, as we have bounds for $\rho \in L^a(Q_T)$ and $\nabla\bb{u} \in L^2(Q_T)$. For the other term we use the bound $\bb{u} \in L^4(Q_T)$ and $\nabla\rho \in L^2(Q_T)$, where the latter follows from a straightforward manipulation with the continuity equation. Hence, we end up with
\begin{equation*}%%%%%%%%%%%%%
     \|\partial_t \rho\|_{L^p(0,T;L^q(\Omega))}+\|\Delta \rho\|_{L^p(0,T;L^q(\Omega))} \leq C(\ep,\delta) %\label{conteqest1}
\end{equation*}%%----------------------------%%
for some $p,q\in (1,2)$, more specifically one can take $p=q=\frac43$.
Finally, we choose $\psi=\eta$ in \eqref{APP1} to obtain
\begin{equation*}%%%%%%%%%%%%%
    \int_{\Gamma_T} |\eta_{xx}  |^2\di x \dd t = \frac1\varepsilon\int_{\Gamma_T} \bb{v}\cdot\bb{e}_2 \eta\di x \dd t+\int_{\Gamma_T} |\eta_t |^2\di x \dd t+ \int_{\Gamma_T} f\eta \di x \dd t\\
    \leq C(\varepsilon,\delta)+\frac12\int_{\Gamma_T} |\eta_{xx}  |^2\di x \dd t.%\label{plateest1}
\end{equation*}%%----------------------------%%
\noindent
To sum up, we have the following set of estimates independent of $m,n \in \mathbb{N}$.
\begin{equation}%%%%%%%%%%%%%
\begin{split}\label{cor1}
    \| \eta_{tx} \|_{L^2(\Gamma_T)}&\leq C(\varepsilon,\delta), \\
     \|\bb{u}\|_{L^4(Q_T)}&\leq C(\varepsilon,\delta), \\
    \|\bb{u}\|_{L^2(0,T;H^1(\Omega))}&\leq C(\varepsilon,\delta), \\
    \|\bb{u} \|_{L^2(0,T;L^p(\Omega))}&\leq C(\varepsilon,\delta,p), \quad \text{for any } p\in(1,\infty), \\
    \|\rho\|_{L^a(Q_T)}&\leq C(\varepsilon,\delta), \\
   \|\partial_t \rho\|_{L^p(0,T;L^q(\Omega))}+\|\Delta \rho\|_{L^p(0,T;L^q(\Omega))}&\leq C(\varepsilon,\delta,p,q), \quad \text{for some } p,q\in(1,2),\\
   \|\eta\|_{L^2(0,T;H^2(\Gamma))}& \leq C(\varepsilon,\delta).
\end{split}    
\end{equation}%%----------------------------%%
%%%%%%%%%%%%%%%%
%%%%%%%%%%%%%%%%
%%%%%%%%%%%%%%%%
%%%%%%%%%%%%%%%%
%%%%%%%%%%%%%%%%
\subsection{Solution to the approximate problem}
\begin{lem}\label{SSPlem}
Assume  $f\in L^2_\#(\Gamma_T)$, $\tilde{\bb{u}}\in \mathcal{P}_{n,m}^{fl}$, and $\tilde{\eta}\in  \mathcal{P}_{n,m}^{str}$ are given and let $\tilde{\bb{v}}=\gamma_{|\hat{\Gamma}^{\tilde{\eta}}}\tilde{\bb{u}}$ (or equivalently $\tilde{\bb{v}}(t,x)=\tilde{\bb{u}}(t,x,\tilde{\eta}(t,x))$). Then, the following problem 
\begin{equation}\label{appstruct1b}%%%%%%%%%%%%%
         \int_{\Gamma_T} \eta_{tt} \psi \di x \dd t +    \int_{\Gamma_T} \eta_{xx}   \psi_{xx}\di x \dd t +\int_{\Gamma_T} \eta_{tx}  \psi_x\di x \dd t +\int_{\Gamma_T}\frac{\eta_t  - \tilde{\bb{v}}\cdot\bb{e}_2}\varepsilon     \psi\di x \dd t=\int_{\Gamma_T}f\psi\di x \dd t
 \end{equation}%%----------------------------%%
 for all $\psi\in\mathcal{P}_{n,m}^{str}$ and all $t \in (0,T)$ has a unique solution $\eta\in \mathcal{P}_{n,m}^{str}$. Moreover, the mapping $(\tilde{\bb{u}},\tilde{\eta})\mapsto \eta$ is compact from $\mathcal{P}_{n,m}^{fl}\times \mathcal{P}_{n,m}^{str}$ to $\mathcal{P}_{n,m}^{str}$.
\end{lem}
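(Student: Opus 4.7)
The plan is to exploit the fact that $\mathcal{P}_{n,m}^{str}$ is finite-dimensional, so that \eqref{appstruct1b} becomes a square linear system in the coefficients of $\eta$, and existence reduces to uniqueness via the Fredholm alternative in finite dimensions. A key structural remark to be made up front is that $\mathcal{P}_{n,m}^{str}$ is invariant under $\partial_t$, since by definition of $\{\tau_j\}$ one has $\tau_{2k}' = -\tfrac{2\pi k}{T}\tau_{2k+1}$ and $\tau_{2k+1}' = \tfrac{2\pi k}{T}\tau_{2k}$, so if $\eta\in\mathcal{P}_{n,m}^{str}$ then $\eta_t\in\mathcal{P}_{n,m}^{str}$ and may legitimately be used as a test function.

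For uniqueness, consider the homogeneous problem with $f=0$ and $\tilde{\bb v}=0$. Testing \eqref{appstruct1b} with $\psi=\eta_t$, the time-periodicity of $\eta$ yields
\begin{equation*}
    \int_{\Gamma_T}\eta_{tt}\eta_t\di x\dd t=\tfrac12\int_{\Gamma_T}\partial_t|\eta_t|^2\di x\dd t=0,\qquad \int_{\Gamma_T}\eta_{xx}\eta_{txx}\di x\dd t=\tfrac12\int_{\Gamma_T}\partial_t|\eta_{xx}|^2\di x\dd t=0,
\end{equation*}
so one is left with $\int_{\Gamma_T}|\eta_{tx}|^2\di x\dd t+\tfrac{1}{\varepsilon}\int_{\Gamma_T}|\eta_t|^2\di x\dd t=0$, forcing $\eta_t\equiv 0$ and hence $\eta=\eta(x)$. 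The constant-in-time function $\eta$ lies in $\mathcal{P}_{n,m}^{str}$ (via the mode $\tau_0\equiv 1$), so we may test again with $\psi=\eta$ to obtain $\int_{\Gamma_T}|\eta_{xx}|^2\di x\dd t=0$. Combined with $\eta(0)=0$ and periodicity $\eta(L)=\eta(0)$, this gives $\eta\equiv 0$. Uniqueness of the homogeneous problem on a finite-dimensional space implies unique solvability of \eqref{appstruct1b} for arbitrary right-hand side.

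For compactness, the strategy is to verify that the map $(\tilde{\bb u},\tilde\eta)\mapsto\eta$ is continuous between the finite-dimensional spaces $\mathcal{P}_{n,m}^{fl}\times\mathcal{P}_{n,m}^{str}$ and $\mathcal{P}_{n,m}^{str}$; continuity on finite-dimensional normed spaces automatically upgrades to compactness, since bounded sets are relatively compact. The composite map factors as $(\tilde{\bb u},\tilde\eta)\mapsto\tilde{\bb v}\mapsto\eta$. The first map is continuous because every element of $\mathcal{P}_{n,m}^{fl}$ is smooth in $(t,x,z)$ and $z$-periodic, so pointwise evaluation $\tilde{\bb v}(t,x)=\tilde{\bb u}(t,x,\tilde\eta(t,x))$ depends continuously on $(\tilde{\bb u},\tilde\eta)$ (the projection $\tilde\eta\mapsto\hat{\tilde\eta}$ is immaterial thanks to $z$-periodicity of $\tilde{\bb u}$). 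The second map is linear, and its boundedness follows from exactly the same two-step energy argument used for uniqueness, now applied with the forcing $\tilde{\bb v}\cdot\bb e_2+\varepsilon f$ on the right: testing with $\psi=\eta_t$ yields $\|\eta_{tx}\|_{L^2(\Gamma_T)}+\varepsilon^{-1/2}\|\eta_t\|_{L^2(\Gamma_T)}\le C(\varepsilon)(\|\tilde{\bb v}\|_{L^2(\Gamma_T)}+\|f\|_{L^2(\Gamma_T)})$, and then testing with $\psi=\eta$ controls $\|\eta_{xx}\|_{L^2(\Gamma_T)}$ (after absorbing, using the Poincaré inequality for $\eta$ with $\eta(0)=0$).

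The only step that requires genuine care is confirming that $\mathcal{P}_{n,m}^{str}$ is stable under $\partial_t$ so that $\psi=\eta_t$ is admissible; everything else is routine once that is in place. The compactness conclusion is essentially automatic because the codomain is finite-dimensional, so no compact-embedding theorem is invoked.
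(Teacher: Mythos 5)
Your proof is correct, and it takes a genuinely different route from the paper's. The paper solves for $u:=\eta_t$ as the primary unknown: it sets up a bilinear form $B(u,v)$ on the subspace $S\subset\mathcal{P}_{n,m}^{str}$ of functions with zero time mean, applies the Lax--Milgram lemma there, and then has to reconstruct $\eta$ from $u$ by projecting $\int_0^t u\,\dd s$ back into $S$ and adding a time-constant correction $G(x)$ obtained from a separate elliptic system; continuity then follows from linearity. You instead treat \eqref{appstruct1b} directly as a square linear system on $\mathcal{P}_{n,m}^{str}$, invoke the finite-dimensional Fredholm alternative, and establish the requisite uniqueness by the two-step energy test ($\psi=\eta_t$ to kill $\eta_t$, then $\psi=\eta$ combined with $\eta(0)=0$ and $x$-periodicity to conclude $\eta\equiv 0$). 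Both arguments hinge on the same structural fact that $\mathcal{P}_{n,m}^{str}$ is stable under $\partial_t$, which you correctly identify and justify. Your route is cleaner in that it sidesteps the paper's reconstruction step and the auxiliary equation for $G$. One small point worth being explicit about in the compactness argument: continuity into a finite-dimensional codomain alone does not give compactness — you also need the map to send bounded sets to bounded sets. Your energy bound supplies this for the linear (affine) piece $\tilde{\bb{v}}\mapsto\eta$, and boundedness of $(\tilde{\bb{u}},\tilde\eta)\mapsto\tilde{\bb{v}}$ on bounded sets follows from the finite-dimensionality and smoothness of $\mathcal{P}_{n,m}^{fl}$; once that is stated, the compactness conclusion is indeed automatic, consistent with (and slightly stronger than) the paper's remark that the solution map is continuous by linearity.
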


% Note that the solution $\eta$ to \eqref{appstruct1b} constructed in Lemma \ref{SSPlem} satisfies
% \begin{equation}\label{appstruct1b}%%%%%%%%%%%%%
%          -\int_{\Gamma_T} \eta_{tt}  \psi\di x \dd t -    \int_{\Gamma_T} \eta_{xx}   \psi_{xx}\di x \dd t -\int_{\Gamma_T}  \eta_{tx}  \psi_x\di x \dd t -\int_{\Gamma_T}\frac{\eta_t  - \tilde{\bb{v}}\cdot\bb{e}_2}\varepsilon     \psi\di x \dd t=-\int_{\Gamma_T}f\psi\di x \dd t
% \end{equation}%%----------------------------%%
% for all  $\psi \in \mathcal{P}_{n,m}^{str}$.
\begin{proof}
The idea is to solve $\eqref{appstruct1b}$ in $\eta_t$ instead of $\eta$. Note that, due to time periodicity of $\eta$, function $\eta_t$ must be mean-value free in time and therefore cannot contain the constant function in time from the time basis. Therefore, we define $S_0 = \mathcal{P}^{str}_{n,0} = {\rm span}\{s_i(x)\}_{1\leq i\leq n}$ and $S:=(\mathcal{P}_{n,m}^{str}\setminus S_0, ||\cdot||_{L^2(\Gamma_T)})$ and the mappings $B:S\times S\to \mathbb{R}$ and $a:S\to \mathbb{R}$ as
\begin{eqnarray*}%%%%%%%%%%%%%
    &&B(u,v):= \int_{\Gamma_T} u_{t} v \di x\dd t+ \int_{\Gamma_T} U_{xx} v_{xx}\di x\dd t+ \int_{\Gamma_T} u_{x} v_x \di x\dd t+ \int_{\Gamma_T}\frac{u}{\varepsilon}v\di x\dd t,\\
    &&a(v) = \int_{\Gamma_T} f v\di x\dd t+ \int_{\Gamma_T}\frac{ \tilde{\bb{v}}\cdot\bb{e}_2}\varepsilon v\di x\dd t
\end{eqnarray*}%%----------------------------%%
where $U(t,x):=\int_0^t u(s,x) \di s$. Then, our problem can be formulated as finding $\eta_t=u \in S$ such that $B(u,v)=a(v)$ for all $v\in S$. Obviously, $B$ is bi-linear and $a$ is bounded and linear. Moreover, by the equivalence of norms in finite basis $\mathcal{P}_{n,m}^{str}$, one has $B(u,v)\leq C ||u||_{L^2(\Gamma_T)} ||v||_{L^2(\Gamma_T)}$. Finally, due to time-periodicity, one has
\begin{eqnarray*}%%%%%%%%%%%%%
    B(u,u) = ||u_x||_{L^2(\Gamma_T)}^2 + \frac1\varepsilon ||u||_{L^2(\Gamma_T)}^2 \geq C ||u||_{L^2(\Gamma_T)}^2.
\end{eqnarray*}%%----------------------------%%
Therefore, the solution $\eta_t=u\in S$ follows directly by Lax-Milgram Lemma. Since $\int_0^t \eta_t(s,x)\,{\rm d} s$ in general does not belong to the space $S$ due to integrals of $\tau_{2k+1}(t)$, we find $\eta$ in the form $\eta(t,x) = P_S(\int_0^t \eta_t(s,x)ds) + G(x)$, where $P_S$ is a projection from $\mathcal{P}^{str}_{n,m}$ onto the space $S$ and $G(x)\in S_0$ is a solution to %(note that $P_S(\int_0^t \eta_t(s,x)ds)$ is a primitive function for $\eta_t$ which consists only of time-dependent functions). This can be easily found from 
the elliptic equation
\begin{eqnarray*}%%%%%%%%%%%%%
    - \int_{\Gamma_T} G_{xx} \psi_{xx}\, {\rm d}x   + \int_{\Gamma_T}  \frac{\tilde{\bb{v}}\cdot\bb{e}_2}\varepsilon     \psi\, {\rm d}x=-\int_{\Gamma_T}f\psi\, {\rm d}x
\end{eqnarray*}%%----------------------------%%
for all $\psi \in S_0$. %, so the solution $\eta$ is obtained. 
The continuity of mapping $(\tilde{\bb{u}},\tilde{\eta})\mapsto \eta$ is a direct consequence of linearity of the equation.
\end{proof}

\begin{lem}\label{lem1}(\cite[Lemma 2]{FMNP})
Let $\tilde{\bb{u}}\in \mathcal{P}_{n,m}^{fl}$. Then, there exists a unique solution $\rho$ to the following problem
\begin{equation*}%%%%%%%%%%%%%
    \partial_t \rho + \nabla \cdot (\rho \tilde{\bb{u}}) -\varepsilon\Delta \rho +\varepsilon \rho=\varepsilon M.
\end{equation*}%%----------------------------%%
Moreover, $\rho\in C_\#^\infty(0,T;W_\#^{2,p}(\Omega))$ for any $p\in (1,\infty)$, the mapping $\tilde{\bb{u}}\mapsto \rho$ is continuous and compact from $\mathcal{P}_{n,m}^{fl}$ to $W_\#^{1,p}(Q_T)$ and $\rho\geq 0$.
\end{lem}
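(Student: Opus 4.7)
The plan is to exploit the linearity of the equation, since $\tilde{\bb{u}} \in \mathcal{P}_{n,m}^{fl}$ is a fixed smooth trigonometric polynomial. The cornerstone is an $L^1$-contraction estimate for the associated homogeneous equation
\begin{equation*}
    \partial_t w + \nabla\cdot(w\tilde{\bb{u}}) - \varepsilon\Delta w + \varepsilon w = 0
\end{equation*}
on the periodic domain $\Omega$. Multiplying formally by $\mathrm{sgn}(w)$ (rigorously justified via the regularization $w_\sigma = \sqrt{w^2+\sigma^2}-\sigma$ and the Kato-type inequality $\mathrm{sgn}(w)\Delta w \leq \Delta|w|$ in the distributional sense), and using that $\mathrm{sgn}(w)\nabla\cdot(w\tilde{\bb{u}}) = \nabla\cdot(|w|\tilde{\bb{u}})$ integrates to zero over periodic $\Omega$, one obtains
\begin{equation*}
    \frac{d}{dt}\int_\Omega |w|\di \bb{y} + \varepsilon \int_\Omega |w|\di \bb{y} \leq 0,
\end{equation*}
hence $\|w(T)\|_{L^1_\#(\Omega)} \leq e^{-\varepsilon T}\|w(0)\|_{L^1_\#(\Omega)}$.

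Let $S\colon L^1_\#(\Omega) \to L^1_\#(\Omega)$ denote the Poincar\'e map at time $T$ for the homogeneous problem, and let $g \in L^1_\#(\Omega)$ be the value at $T$ of the solution to the inhomogeneous problem with zero initial data. A time-periodic solution of the full equation corresponds to a fixed point of the affine map $F(\rho_0) := S\rho_0 + g$, which by the contraction estimate is a strict contraction on $L^1_\#(\Omega)$ with ratio $e^{-\varepsilon T}<1$. Banach's theorem thus produces a unique fixed point $\rho_0$, and hence a unique time-periodic solution. For non-negativity, note that $g\geq 0$ because the source $\varepsilon M$ is non-negative and the initial-value problem preserves non-negativity (classical maximum principle, proved by testing against $w_-$ and using that the zero-order coefficient $\nabla\cdot\tilde{\bb{u}}+\varepsilon$ is bounded). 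Iterating $F$ from $\rho_0^{(0)} = 0$ produces a sequence of non-negative initial data converging in $L^1_\#(\Omega)$ to the unique fixed point, so $\rho \geq 0$.

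The regularity $\rho \in C^\infty_\#(0,T; W^{2,p}_\#(\Omega))$ follows from a bootstrap based on maximal parabolic $L^p$-regularity. Starting from $\rho_0 \in L^p(\Omega)$ and using the smoothness of $\tilde{\bb{u}}$ and of the source, one obtains $\rho \in L^p(0,T;W^{2,p}(\Omega))\cap W^{1,p}(0,T;L^p(\Omega))$, whence by the trace theorem $\rho(T) = \rho(0) \in W^{2-2/p,p}(\Omega)$. Iterating with higher Sobolev indices gives any desired spatial regularity, and differentiating the equation in $t$ (permissible because $\tilde{\bb{u}}$ is $C^\infty_t$ and $M$ is constant) upgrades this to arbitrary time regularity.

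Finally, continuity and compactness of $\tilde{\bb{u}}\mapsto \rho$ follow from the equivalence of all norms on the finite-dimensional space $\mathcal{P}_{n,m}^{fl}$: a convergent sequence $\tilde{\bb{u}}_k \to \tilde{\bb{u}}$ produces solutions $\rho_k$ uniformly bounded in the high-regularity classes obtained above, so the Aubin-Lions lemma yields strong compactness in $W^{1,p}_\#(Q_T)$, and passing to the limit in the linear equation together with uniqueness identifies the limit with the solution for $\tilde{\bb{u}}$. The only genuinely non-trivial ingredient is the $L^1$-contraction, which needs the careful Kato-type justification; once it is in place, the remainder is standard linear parabolic theory.
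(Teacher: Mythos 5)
The paper does not supply its own proof of this lemma: it is stated as a citation of \cite[Lemma 2]{FMNP}, so there is no in-text argument to compare yours against. That said, your self-contained proof is correct and follows what is essentially the standard route for this type of time-periodic parabolic problem, and it is in the same spirit as the argument in the cited reference.

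Two small remarks on your write-up. First, the $L^1$ framework is the right one here and your contraction argument is sound: the choice of $p=1$ is not cosmetic, since for $p>1$ the convective term produces a residual $\frac{p-1}{p}\int_\Omega |w|^p\,\nabla\cdot\tilde{\bb{u}}$ after integration by parts, which is not sign-definite and cannot be absorbed by the $\varepsilon$-damping in general. You should perhaps make explicit that the fixed point is therefore found in $L^1_\#(\Omega)$, and that passing to $\rho_0\in L^p(\Omega)$ (the starting point of your regularity bootstrap) requires a preliminary parabolic-smoothing step: for positive times the solution of the initial value problem is regular regardless of the $L^1$ initial data, and time-periodicity transfers this regularity back to the initial time $\rho(T)=\rho(0)$. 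Once this is said, your trace-theorem iteration is fine, and the claimed $C^\infty$ regularity in time (from the fact that $\tilde{\bb{u}}$ is a trigonometric polynomial in $t$) combined with $W^{2,p}$ in space is reached. Second, when invoking the Kato inequality $\mathrm{sgn}(w)\Delta w\le\Delta|w|$ you implicitly use $\Delta w\in L^1_{\mathrm{loc}}$; this is available because you may run the regularized estimate on smooth approximate solutions and pass to the limit, as your mention of $w_\sigma=\sqrt{w^2+\sigma^2}-\sigma$ indicates, but it is worth stating. With these clarifications, the proof is complete and there is no genuine gap.
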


\begin{lem}\label{lem2}
Let $\tilde{\bb{u}}\in \mathcal{P}_{n,m}^{fl}$, $ \tilde{\eta}\in \mathcal{P}_{n,m}^{str}$ and $\rho\in C_\#^\infty(0,T;W_\#^{2,p}(\Omega))$. Then, there exists a solution $\bb{u}\in \mathcal{P}_{n,m}^{fl}$ of
\begin{multline}%%%%%%%%%%%%%
    \delta\int_{Q_T} \bb{u} \cdot \partial_t \boldsymbol\varphi\di \bb{y} \dd t+\int_{Q_T} \rho \tilde{\bb{u}} \cdot \partial_t \boldsymbol\varphi\di \bb{y} \dd t + \int_{Q_T} \rho \tilde{\bb{u}}\otimes \tilde{\bb{u}}:\nabla\boldsymbol\varphi\di \bb{y} \dd t +  \int_{Q_T} (\rho^\gamma+\delta\rho^a) \nabla\cdot\boldsymbol\varphi\di \bb{y} \dd t\\
    - \int_{Q_T} \mathbb{S}(\nabla \bb{u}):\nabla\boldsymbol\varphi\di \bb{y} \dd t  - \delta \int_{Q_T}|\bb{u}|^2 \bb{u}\cdot\boldsymbol\varphi\di \bb{y} \dd t -\varepsilon\int_{Q_T}\nabla \rho\otimes\boldsymbol\varphi:\nabla \tilde{\bb{u}}\di \bb{y} \dd t\\
    + \frac\varepsilon2\int_{Q_T}(M-\rho)\tilde{\bb{u}}\cdot\boldsymbol\varphi\di \bb{y} \dd t
    -\int_{\Gamma_T}\frac{\bb{v}- \tilde{\eta}_t\bb{e}_2}\varepsilon \cdot\boldsymbol\psi\di x \dd t=-\int_{Q_T}\rho\bb{F}_\delta\cdot\boldsymbol\varphi\di \bb{y} \dd t,\label{eq:flap}
\end{multline}%%----------------------------%%
for all $\boldsymbol\varphi\in \mathcal{P}_{n,m}^{fl}$, where $\boldsymbol\psi = \gamma_{|\hat{\Gamma}^{\tilde{\eta}}}\boldsymbol\varphi$ and $\bb{v} = \gamma_{|\hat{\Gamma}^{\tilde{\eta}}}\bb{u}$. Moreover, the mapping $(\rho,\tilde{\bb{u}},\tilde{\eta})\mapsto \bb{u}$ is continuous from $W^{1,p}_\#(Q_T)\times\mathcal{P}_{n,m}^{fl}\times \mathcal{P}_{n,m}^{str}$ to $\mathcal{P}_{n,m}^{fl}$.
\end{lem}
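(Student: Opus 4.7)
The plan is to view \eqref{eq:flap} as a nonlinear algebraic system on the finite-dimensional space $\mathcal{P}_{n,m}^{fl}$ and to solve it by a direct Brouwer-type fixed-point argument. Fix a basis $\{\bb{g}_k\}_{k=1}^N$ of $\mathcal{P}_{n,m}^{fl}$, identify $\bb{u}=\sum_k \alpha_k \bb{g}_k$ with $\alpha\in\mathbb{R}^N$, and let $\Phi\colon\mathbb{R}^N\to\mathbb{R}^N$ be the continuous map whose $j$-th component is the left-hand side of \eqref{eq:flap} minus its right-hand side, tested against $\boldsymbol\varphi=\bb{g}_j$; a zero of $\Phi$ is exactly the desired $\bb{u}$. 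With the data $\rho,\tilde{\bb{u}},\tilde\eta,\bb{F}_\delta$ fixed, the unknown $\bb{u}$ enters \eqref{eq:flap} only linearly (through the $\delta\partial_t$ term, through $\mathbb{S}(\nabla\bb{u})$, and through the boundary trace $\bb{v}=\gamma_{|\hat{\Gamma}^{\tilde{\eta}}}\bb{u}$) and cubically (through $-\delta|\bb{u}|^2\bb{u}$); every remaining term contributes a fixed vector to $\Phi$.

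The decisive estimate comes from testing \eqref{eq:flap} with $\boldsymbol\varphi=\bb{u}$ itself. Time-periodicity of $\bb{u}$ forces $\delta\int_{Q_T}\bb{u}\cdot\partial_t\bb{u}=0$, and the $\bb{u}$-dependent contributions collapse onto the dissipative block
\[
-\int_{Q_T}\mathbb{S}(\nabla\bb{u}):\nabla\bb{u}\di\bb{y}\dd t-\delta\int_{Q_T}|\bb{u}|^4\di\bb{y}\dd t-\frac{1}{\varepsilon}\int_{\Gamma_T}|\bb{v}|^2\di x\dd t+\frac{1}{\varepsilon}\int_{\Gamma_T}\tilde{\eta}_t\bb{e}_2\cdot\bb{v}\di x\dd t.
\]
After an integration by parts in $t$ in the term $\int\rho\tilde{\bb{u}}\cdot\partial_t\bb{u}$ (the boundary values cancel by periodicity), every remaining term in $\Phi(\bb{u})\cdot\bb{u}$ is controlled by $C(\varepsilon,\delta,\rho,\tilde{\bb{u}},\tilde\eta,\bb{F}_\delta)$ times one of $\|\bb{u}\|_{L^2(Q_T)}$, $\|\nabla\bb{u}\|_{L^2(Q_T)}$, or $\|\bb{v}\|_{L^2(\Gamma_T)}$; absorbing these by Young's inequality into the three coercive quantities above yields
\[
\Phi(\bb{u})\cdot\bb{u} \le -\tfrac{\delta}{2}\|\bb{u}\|_{L^4(Q_T)}^4+C.
\]
Since $\mathcal{P}_{n,m}^{fl}$ is finite-dimensional, $\|\bb{u}\|_{L^4(Q_T)}^4\ge c\|\alpha\|^4$ for an equivalent norm, so $\Phi(\alpha)\cdot\alpha<0$ on $\|\alpha\|=R$ for $R$ large enough. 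The standard corollary of Brouwer's fixed-point theorem (a continuous vector field pointing strictly inward on a sphere has a zero inside) then produces $\bb{u}\in\mathcal{P}_{n,m}^{fl}$ with $\Phi(\bb{u})=0$, i.e.\ a solution of \eqref{eq:flap}.

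For continuity of $(\rho,\tilde{\bb{u}},\tilde\eta)\mapsto\bb{u}$ I first establish uniqueness. If $\bb{u}_1,\bb{u}_2$ solve \eqref{eq:flap} for the same data, subtracting the two identities and testing with $\bb{u}_1-\bb{u}_2$ eliminates every linear-in-$\bb{u}$ data-containing term and leaves
\[
\int_{Q_T}\mathbb{S}(\nabla(\bb{u}_1-\bb{u}_2)):\nabla(\bb{u}_1-\bb{u}_2)+\delta\int_{Q_T}\bigl(|\bb{u}_1|^2\bb{u}_1-|\bb{u}_2|^2\bb{u}_2\bigr)\cdot(\bb{u}_1-\bb{u}_2)+\frac{1}{\varepsilon}\int_{\Gamma_T}|\bb{v}_1-\bb{v}_2|^2=0,
\]
with all three summands nonnegative (by ellipticity of $\mathbb{S}$, strict monotonicity of $\bb{u}\mapsto|\bb{u}|^2\bb{u}$, and the manifest square, respectively); hence each vanishes and the middle one forces $\bb{u}_1=\bb{u}_2$ pointwise. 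Given a convergent sequence of data $(\rho_k,\tilde{\bb{u}}_k,\tilde\eta_k)\to(\rho,\tilde{\bb{u}},\tilde\eta)$ in the stated topologies, the coercivity estimate yields a uniform bound on the corresponding solutions $\bb{u}_k\in\mathcal{P}_{n,m}^{fl}$; finite-dimensional compactness extracts a subsequence $\bb{u}_{k_\ell}\to\bb{u}_\infty$, and every term of \eqref{eq:flap} depends continuously on the coefficients of $\bb{u}$ and on the data (the trace operator $\gamma_{|\hat{\Gamma}^{\tilde{\eta}}}$ being continuous in $\tilde\eta\in\mathcal{P}_{n,m}^{str}$ since the structure basis is smooth), so $\bb{u}_\infty$ solves \eqref{eq:flap} for the limit data. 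Uniqueness then promotes subsequential to full convergence.

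The only technical subtlety is the $\tilde\eta$-dependence of the trace $\gamma_{|\hat{\Gamma}^{\tilde{\eta}}}$, but this is essentially automatic in the present setting, because $\tilde\eta\in\mathcal{P}_{n,m}^{str}$ is a finite linear combination of smooth basis functions and all $\bb{u}\in\mathcal{P}_{n,m}^{fl}$ are likewise smooth, so on bounded sets the composition $(\bb{u},\tilde\eta)\mapsto\bb{u}(\cdot,\cdot,\hat{\tilde\eta}(\cdot,\cdot))$ is jointly continuous in every norm used above. No uniformity with respect to $\varepsilon,\delta,n,m$ is sought at this stage; such uniformity is the content of the later limit passages.
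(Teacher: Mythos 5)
Your proof is correct, and the two halves of it relate to the paper's proof differently.

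For \emph{existence}, you and the paper both ultimately rely on finite-dimensional coercivity. The paper recasts \eqref{eq:flap} as $A\bb{u}=RHS$ with $A\bb{u} = \mathcal{P}(\delta\bb{u}_t - \nabla\cdot\mathbb{S}(\nabla\bb{u}) + \delta|\bb{u}|^2\bb{u} + \tfrac{1}{\varepsilon}\bb{v})$ and cites a classical coercivity-implies-surjectivity theorem for such operators; you prove the same coercivity estimate directly (testing with $\bb{u}$, using periodicity to kill the $\delta\int\bb{u}\cdot\partial_t\bb{u}$ term, and absorbing the remaining linear contributions) and apply Brouwer's theorem in the form of a vector field pointing inward on a large sphere. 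These are essentially the same argument at different levels of abstraction; yours is self-contained while the paper's is shorter via a citation.

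For \emph{continuity}, you take a genuinely different route. The paper proves continuity directly: it subtracts the two equations for $\bb{u}_1,\bb{u}_2$ corresponding to two data tuples, tests with $\bb{u}_1-\bb{u}_2$, and then estimates every resulting term by hand, including the delicate trace differences such as $\gamma_{|\hat\Gamma^{\tilde\eta_1}}\bb{w} - \gamma_{|\hat\Gamma^{\tilde\eta_2}}\bb{w}$, which require a mean-value-theorem decomposition and rely on the equivalence of norms on the finite-dimensional spaces. This yields a quantitative bound of $\|\bb{u}_1-\bb{u}_2\|$ in terms of differences of the data. You instead first prove uniqueness for fixed data (via nonnegativity of the three dissipative terms, with strict monotonicity of $\bb{u}\mapsto|\bb{u}|^2\bb{u}$ forcing coincidence) and then obtain continuity indirectly through the a priori bound, finite-dimensional compactness, continuity of every term with respect to the coefficients and the data, and the uniqueness-based subsequence argument. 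Your route is shorter and avoids all of the term-by-term estimates of the trace differences, at the cost of giving only qualitative (not quantitative) continuity; since the lemma asks only for continuity, that is sufficient, and in fact this is a neat simplification. The one point you correctly flag and handle is the $\tilde\eta$-dependence of the trace, which is indeed unproblematic here because both the structure and fluid basis functions are smooth and the spaces are finite-dimensional, so the composition $(\bb{u},\tilde\eta)\mapsto \bb{u}(\cdot,\cdot,\hat{\tilde\eta}(\cdot,\cdot))$ is jointly continuous.
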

\begin{proof}
The existence of solution is straightforward. Indeed, \eqref{eq:flap} may be rewritten as
$$
A \bb{u} = RHS
$$
where 
$$
A\bb{u} =\mathcal P\left( \delta \bb{u}_t - \nabla\cdot\mathbb S(\nabla \bb{u}) + \delta |\bb{u}|^2 \bb{u} + \frac 1\varepsilon \bb{v}\right)
$$
where $\mathcal P$ denotes the projection to $\mathcal P^{fl}_{n,m}$ and $RHS$ contains all the other terms. The operator $A$ is a coercive operator on $\mathcal P_{n,m}^{fl}$ and the classical result then yields that $A$ is also surjective -- we refer to \cite[Theorem 2.6]{rouba}.

To prove the continuity, let $\rho_1,\rho_2\in C_\#^\infty(0,T;W_\#^{2,p}(\Omega))$, $\tilde{\bb{u}}_1,\tilde{\bb{u}}_2\in \mathcal{P}_{n,m}^{fl}$ and $\tilde{\eta}_1,\tilde{\eta_2}\in \mathcal{P}_{n,m}^{str}$ be given, and let $\bb{u}_1, \bb{u}_2\in \mathcal{P}_{n,m}^{fl}$ be the corresponding solutions. Denote $\bb{v}_i = \gamma_{|\hat{\Gamma}^{\tilde{\eta_i}}}\bb{u}_i$ for $i=1,2$. We take the difference of the equation for $\bb{u}_1$ tested with $\boldsymbol\varphi = (\bb{u}_1-\bb{u}_2)$ and the equation for $\bb{u}_2$ tested with $\boldsymbol\varphi = (\bb{u}_1-\bb{u}_2)$. We emphasize that even though the test functions $\boldsymbol\varphi$ in both equations are the same, the corresponding $\boldsymbol\psi$ are different in both equations, as they are traces of $\boldsymbol\varphi$ on different curves $\tilde{\eta}_i$. Since
\begin{equation*}
    \frac14 |\bb{u}_1-\bb{u}_2|^4 \leq (|\bb{u}_1|^2\bb{u}_1 - |\bb{u}_2|^2\bb{u}_2)\cdot(\bb{u}_1-\bb{u}_2)
\end{equation*}
we get
\begin{multline*}%%%%%%%%%%%%%
    \int_{Q_T} \mathbb{S}(\nabla\bb{u}_1 - \nabla\bb{u}_2):\nabla(\bb{u}_1 - \bb{u}_2)\di \bb{y} \dd t+\frac{\delta}{4} \int_{Q_T}|\bb{u}_1-\bb{u}_2|^4\di \bb{y} \dd t+\frac1\varepsilon\int_{\Gamma_T} |\bb{v}_1-\bb{v}_2|^2\di x \dd t\\
    \leq\int_{Q_T} (\rho_1 \tilde{\bb{u}}_1 - \rho_2\tilde{\bb{u}}_2) \cdot \partial_t(\bb{u}_1 - \bb{u}_2)\di \bb{y} \dd t+\int_{Q_T} (\rho_1 \tilde{\bb{u}}_1\otimes \tilde{\bb{u}}_1 - \rho_2 \tilde{\bb{u}}_2\otimes \tilde{\bb{u}}_2): \nabla (\bb{u}_1-\bb{u}_2)\di \bb{y} \dd t \\
    +  \int_{Q_T} (\rho_1^\gamma -\rho_2^\gamma  +\delta\rho_1^a-\delta\rho_2^a) \nabla\cdot(\bb{u}_1-\bb{u}_2)\di \bb{y} \dd t- \varepsilon\int_{Q_T}(\nabla \rho_1- \nabla\rho_2)\otimes(\bb{u}_1 - \bb{u}_2):\nabla \tilde{\bb{u}}_1\di \bb{y} \dd t \\
    +\varepsilon\int_{Q_T}\nabla \rho_2\otimes(\bb{u}_1 - \bb{u}_2):\nabla (\tilde{\bb{u}}_2-\tilde{\bb{u}}_1)\di \bb{y} \dd t+ \frac\varepsilon2\int_{Q_T}M(\tilde{\bb{u}}_1 -\tilde{\bb{u}}_2) \cdot(\bb{u}_1-\bb{u}_2) \di \bb{y} \dd t\\
    - \frac\varepsilon2\int_{Q_T}(\rho_1 - \rho_2)\tilde{\bb{u}}_2\cdot(\bb{u}_1-\bb{u}_2)\di \bb{y} \dd t - \frac\varepsilon2\int_{Q_T} \rho_1(\tilde{\bb{u}}_1-\tilde{\bb{u}}_2)\cdot(\bb{u}_1-\bb{u}_2)\di \bb{y} \dd t\\
    +\int_{Q_T}(\rho_1 - \rho_2)\bb{F}_\delta \cdot (\bb{u}_1-\bb{u}_2)\di \bb{y} \dd t 
    -\frac1\varepsilon \int_{\Gamma_T} (\gamma_{|\hat{\Gamma}^{\tilde{\eta_1}}}\bb{u}_1 - \gamma_{|\hat{\Gamma}^{\tilde{\eta_2}}}\bb{u}_2) \cdot (\gamma_{|\hat{\Gamma}^{\tilde{\eta_2}}} \bb{u}_2 - \gamma_{|\hat{\Gamma}^{\tilde{\eta_1}}} \bb{u}_2) \di x \dd t\\
    - \frac1\varepsilon\int_{\Gamma_T} \gamma_{|\hat{\Gamma}^{\tilde{\eta_2}}}\bb{u}_2 \cdot (\gamma_{|\hat{\Gamma}^{\tilde{\eta_2}}}(\bb{u}_2-\bb{u}_1) - \gamma_{|\hat{\Gamma}^{\tilde{\eta_1}}}(\bb{u}_2-\bb{u}_1)) \di x \dd t\\
    +\frac1\varepsilon\int_{\Gamma_T}(\tilde{\eta}_{1t}- \tilde{\eta}_{2t})\bb{e}_2 \cdot \gamma_{|\hat{\Gamma}^{\tilde{\eta_1}}}(\bb{u}_1 - \bb{u}_2)\di x \dd t\\
    +\frac1\varepsilon\int_{\Gamma_T}\tilde{\eta}_{2t}\bb{e}_2 \cdot (\gamma_{|\hat{\Gamma}^{\tilde{\eta_1}}}(\bb{u}_1 - \bb{u}_2)-\gamma_{|\hat{\Gamma}^{\tilde{\eta_2}}}(\bb{u}_1 - \bb{u}_2))\di x \dd t
\end{multline*}%%----------------------------%%
where we used that
\begin{multline*}%%%%%%%%%%%%%
    \int_{\Gamma_T} \gamma_{|\hat{\Gamma}^{\tilde{\eta_1}}}\bb{u}_1 \cdot \gamma_{|\hat{\Gamma}^{\tilde{\eta_1}}}(\bb{u}_1 -\bb{u}_2)\di x \dd t -  \int_{\Gamma_T} \gamma_{|\hat{\Gamma}^{\tilde{\eta_2}}}\bb{u}_2 \cdot \gamma_{|\hat{\Gamma}^{\tilde{\eta_2}}}(\bb{u}_1 -\bb{u}_2)\di x \dd t\\
    = \int_{\Gamma_T} \gamma_{|\hat{\Gamma}^{\tilde{\eta_1}}}\bb{u}_1 \cdot (\gamma_{|\hat{\Gamma}^{\tilde{\eta_1}}}\bb{u}_1 -\gamma_{|\hat{\Gamma}^{\tilde{\eta_2}}}\bb{u}_2)\di x \dd t -  \int_{\Gamma_T} \gamma_{|\hat{\Gamma}^{\tilde{\eta_2}}}\bb{u}_2 \cdot( \gamma_{|\hat{\Gamma}^{\tilde{\eta_1}}}\bb{u}_1 - \gamma_{|\hat{\Gamma}^{\tilde{\eta_2}}}\bb{u}_2)\di x \dd t \\
    +\int_{\Gamma_T} \gamma_{|\hat{\Gamma}^{\tilde{\eta_1}}}\bb{u}_1 \cdot (\gamma_{|\hat{\Gamma}^{\tilde{\eta_2}}} \bb{u}_2 - \gamma_{|\hat{\Gamma}^{\tilde{\eta_1}}} \bb{u}_2)\di x \dd t - \int_{\Gamma_T} \gamma_{|\hat{\Gamma}^{\tilde{\eta_2}}}\bb{u}_2 \cdot (\gamma_{|\hat{\Gamma}^{\tilde{\eta_2}}} \bb{u}_1 - \gamma_{|\hat{\Gamma}^{\tilde{\eta_1}}} \bb{u}_1)\di x \dd t \\
     = \int_{\Gamma_T}  \underbrace{|\gamma_{|\hat{\Gamma}^{\tilde{\eta_1}}}\bb{u}_1 -\gamma_{|\hat{\Gamma}^{\tilde{\eta_2}}}\bb{u}_2|^2}_{=|\bb{v}_1-\bb{v}_2|^2}\di x \dd t +  \int_{\Gamma_T} (\gamma_{|\hat{\Gamma}^{\tilde{\eta_1}}}\bb{u}_1 - \gamma_{|\hat{\Gamma}^{\tilde{\eta_2}}}\bb{u}_2) \cdot (\gamma_{|\hat{\Gamma}^{\tilde{\eta_2}}} \bb{u}_2 - \gamma_{|\hat{\Gamma}^{\tilde{\eta_1}}} \bb{u}_2)\di x \dd t \\
     + \int_{\Gamma_T} \gamma_{|\hat{\Gamma}^{\tilde{\eta_2}}}\bb{u}_2 \cdot (\gamma_{|\hat{\Gamma}^{\tilde{\eta_2}}}(\bb{u}_2-\bb{u}_1) - \gamma_{|\hat{\Gamma}^{\tilde{\eta_1}}}(\bb{u}_2-\bb{u}_1))\di x \dd t.
\end{multline*}%%----------------------------%%
The convective term is treated as follows
\begin{multline*}%%%%%%%%%%%%%
    \int_{Q_T} (\rho_1 \tilde{\bb{u}}_1\otimes \tilde{\bb{u}}_1 - \rho_2 \tilde{\bb{u}}_2\otimes \tilde{\bb{u}}_2): \nabla (\bb{u}_1-\bb{u}_2)\di \bb{y} \dd t \\
    \leq C \int_{Q_T} (|\rho_1 - \rho_2|^2 + |\tilde{\bb{u}}_1-\tilde{\bb{u}}_2|^2)\di \bb{y} \dd t + c\int_{Q_T}|\nabla(\bb{u}_1-\bb{u}_2)|^2\di \bb{y} \dd t
\end{multline*}%%----------------------------%%
where $c$ is taken small enough to absorb the term into the left hand side using the Korn inequality and $C$ depends on the functions and $m,n,\varepsilon,\delta$ and $c$.   The remaining terms on $Q_T$ are estimated in a similar fashion. The most involved boundary term is the following
\begin{multline*}%%%%%%%%%%%%%
    \frac1\varepsilon\int_{\Gamma_T} \gamma_{|\hat{\Gamma}^{\eta_2}}\bb{u}_2 \cdot (\gamma_{|\hat{\Gamma}^{\eta_2}}(\bb{u}_2-\bb{u}_1) - \gamma_{|\hat{\Gamma}^{\eta_1}}(\bb{u}_2-\bb{u}_1))\di x \dd t \leq C \int_{\Gamma_T} ( \tilde{\eta}_1 - \tilde{\eta}_2) \|\partial_{z}\bb{u}_1 - \partial_{z}\bb{u}_2\|_{C(Q_T)}\di x \dd t\\
    \leq C\int_{\Gamma_T} |\tilde{\eta}_1 - \tilde{\eta}_2|^2\di x \dd t + \frac{\delta}{16} \int_{Q_T} |\bb{u}_1 - \bb{u}_2|^2\di \bb{y} \dd t,
\end{multline*}%%----------------------------%%
by the equivalence of norms in a finite basis, where we have also used
\begin{multline*}%%%%%%%%%%%%%
    \gamma_{|\hat{\Gamma}^{\eta_2}}(\bb{u}_2-\bb{u}_1)(t,x) - \gamma_{|\hat{\Gamma}^{\eta_1}}(\bb{u}_2-\bb{u}_1)(t,x) = (\bb{u}_2-\bb{u}_1)(t,x,\tilde{\eta}_1(t,x)) -(\bb{u}_2-\bb{u}_1)(t,x,\tilde{\eta}_2(t,x))\\
    =(\tilde{\eta}_1(t,x) - \tilde{\eta}_2(t,x)) \partial_{z}(\bb{u}_2-\bb{u}_1)(t,x,\theta\tilde{\eta}_1(t,x)+(1-\theta)\tilde{\eta}_2(t,x) ),\quad \theta \in (0,1),
\end{multline*}%%----------------------------%%
which follows by the mean value theorem. We estimate the other terms similarly and we end up with
\begin{multline*}%%%%%%%%%%%%%
     \int_{Q_T} \mathbb{S}(\nabla\bb{u}_1 - \nabla\bb{u}_2):\nabla(\bb{u}_1 - \bb{u}_2)\di \bb{y} \dd t+\delta \int_{Q_T}|\bb{u}_1-\bb{u}_2|^4\di \bb{y} \dd t+\frac1\varepsilon\int_{\Gamma_T} |\bb{v}_1-\bb{v}_2|^2\di x \dd t\\
    \leq C\int_{Q_T}|\nabla\rho_1 -\nabla\rho_2|^2\di \bb{y} \dd t + C\int_{Q_T}|\rho_1 -\rho_2|^2\di \bb{y} \dd t + C\int_{Q_T}|\tilde{\bb{u}}_1 -\tilde{\bb{u}}_2|^2\di \bb{y} \dd t \\
    +  C\int_{\Gamma_T} |\tilde{\eta}_1 - \tilde{\eta}_2|^2\di x \dd t +  C\int_{\Gamma_T} |\tilde{\eta}_{1t} - \tilde{\eta}_{2t}|^2\di x \dd t,
\end{multline*}%%----------------------------%%
so the solution mapping is continuous.
\end{proof}
%%%%%%%%%%%%%%%%%%%%%
%%%%%%%%%%%%%%%%%%%%%
%%%%%%%%%%%%%%%%%%%%%
%%%%%%%%%%%%%%%%%%%%%
%%%%%%%%%%%%%%%%%%%%%
%%%%%%%%%%%%%%%%%%%%%
%%%%%%%%%%%%%%%%%%%%%
%%%%%%%%%%%%%%%%%%%%%
%%%%%%%%%%%%%%%%%%%%%
%%%%%%%%%%%%%%%%%%%%%
\begin{lem}
There exists a solution $(\rho,\bb{u},\eta)$ to the approximate problem $\eqref{APP1}-\eqref{APP3}$.
\end{lem}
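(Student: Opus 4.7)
The plan is to construct the solution as a fixed point of a continuous compact map $T$ on the finite-dimensional space $X := \mathcal{P}_{n,m}^{fl} \times \mathcal{P}_{n,m}^{str}$. Given $(\tilde{\bb{u}}, \tilde{\eta}) \in X$, apply Lemma~\ref{lem1} with input $\tilde{\bb{u}}$ to produce a density $\rho$, Lemma~\ref{SSPlem} with input $(\tilde{\bb{u}}, \tilde{\eta})$ to produce a displacement $\eta$, and Lemma~\ref{lem2} with input $(\rho, \tilde{\bb{u}}, \tilde{\eta})$ to produce a velocity $\bb{u}$, and set $T(\tilde{\bb{u}}, \tilde{\eta}) := (\bb{u}, \eta)$. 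By construction, any fixed point of $T$ is a solution of \eqref{APP1}--\eqref{APP3}. The map $T$ is continuous by composition of the continuity statements in the three preceding lemmas (together with the embedding of $C^\infty_\#(0,T;W^{2,p}_\#(\Omega))$ into $W^{1,p}_\#(Q_T)$ used to transfer $\rho$ from Lemma~\ref{lem1} into Lemma~\ref{lem2}), and compact because $X$ is finite-dimensional.

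The fixed point is then produced via the Leray--Schauder (Schaefer) theorem, which in this setting reduces to exhibiting a uniform bound, in $X$, for the set of pairs $(\bb{u},\eta)$ satisfying $(\bb{u},\eta) = \lambda\, T(\bb{u},\eta)$ for some $\lambda \in [0,1]$. Unfolding the fixed-point relation produces, together with the corresponding $\rho$, a $\lambda$-rescaled variant of \eqref{APP1}--\eqref{APP3} in which the coupling, convective, pressure and force terms pick up factors of $\lambda$, whereas the non-negative dissipative terms --- $\int \mathbb{S}(\nabla\bb{u}):\nabla\bb{u}$, $\delta\int |\bb{u}|^4$, $\int |\eta_{tx}|^2$, the $\varepsilon$-regularisations $\varepsilon\gamma\int\rho^{\gamma-2}|\nabla\rho|^2$ and $\varepsilon\delta a\int\rho^{a-2}|\nabla\rho|^2$, and the penalty $\tfrac{1}{\varepsilon}\int|\bb{v}-\eta_t\bb{e}_2|^2$ --- retain their sign and structure. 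Repeating the energy computation of Subsection~5.1 line by line (testing the structure equation with $\eta_t$, multiplying the damped continuity equation successively by $\tfrac{\gamma}{\gamma-1}\rho^{\gamma-1}$, $\tfrac{\delta a}{a-1}\rho^{a-1}$ and $\tfrac{1}{2}|\bb{u}|^2$, and testing the fluid equation with $\bb{u}$), and absorbing the $\lambda$-rescaled right hand sides via Young's inequality, yields the bounds \eqref{cor1} uniformly in $\lambda \in [0,1]$.

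The main technical subtlety is the careful bookkeeping when the $\lambda$-homotopy is unfolded: one must verify that the stabilising correction terms $\tfrac{\varepsilon}{2}(M-\rho)\bb{u}$ and $\varepsilon\nabla\rho\otimes\boldsymbol\varphi:\nabla\bb{u}$ included in \eqref{APP3} still combine with the damped continuity equation \eqref{APP2} to cancel the spurious contributions coming from the artificial diffusion and mass source. These correction terms were introduced in \eqref{APP3} precisely for this purpose, so the energy balance closes even in the rescaled system and the desired uniform bound is obtained. With this bound in hand, the Leray--Schauder theorem produces a fixed point of $T$, which is the desired solution of the approximate problem \eqref{APP1}--\eqref{APP3}.
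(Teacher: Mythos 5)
Your overall strategy matches the paper's: define $\mathcal{T}$ by composing Lemmas~\ref{SSPlem}, \ref{lem1}, \ref{lem2}, observe continuity and compactness (the latter trivially from finite dimension), and invoke Schaeffer's fixed point theorem by showing the set $\{(\tilde\eta,\tilde{\bb{u}}) : \lambda\mathcal{T}(\tilde\eta,\tilde{\bb{u}})=(\tilde\eta,\tilde{\bb{u}}),\ \lambda\in[0,1]\}$ is bounded. So far so good.

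However, there is a gap in the boundedness step, and you have in fact misidentified where the real difficulty lies. You single out the $\varepsilon$-corrections $\tfrac{\varepsilon}{2}(M-\rho)\bb{u}$ and $\varepsilon\nabla\rho\otimes\boldsymbol\varphi:\nabla\bb{u}$ as the subtle point, but those combine with the multiplied continuity equation in the usual way, just as in Subsection~5.1, and pose no $\lambda$-related difficulty. The actual subtlety is the penalization. You assert that the term $\tfrac{1}{\varepsilon}\int_{\Gamma_T}|\bb{v}-\eta_t\bb{e}_2|^2$ ``retains its sign and structure'' in the $\lambda$-homotopy, but this is false. When $\lambda\mathcal{T}(\tilde\eta,\tilde{\bb{u}})=(\tilde\eta,\tilde{\bb{u}})$, writing $(\eta,\bb{u})=\mathcal{T}(\tilde\eta,\tilde{\bb{u}})$ one has $\tilde\eta=\lambda\eta$ and $\tilde{\bb{u}}=\lambda\bb{u}$, so the structure equation \eqref{appstruct1b} contains the penalty $\tfrac{1}{\varepsilon}(\eta_t-\lambda\bb{v}\cdot\bb{e}_2)$ while the fluid equation \eqref{eq:flap} contains $\tfrac{1}{\varepsilon}(\bb{v}-\lambda\eta_t\bb{e}_2)$. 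Testing the former by $\psi=\eta_t$ and the latter by $\boldsymbol\varphi=\tilde{\bb{u}}=\lambda\bb{u}$ (which is what the structure of the Schaeffer set dictates — one cannot test with $\bb{u}$ directly) produces, after the usual algebraic rearrangement, not a single square but the combination
\begin{equation*}
\frac{\lambda}{2\varepsilon}\int_{\Gamma_T}|\bb{v}-\lambda\eta_t\bb{e}_2|^2 + \frac{\lambda}{2\varepsilon}\int_{\Gamma_T}|\bb{v}|^2 + \frac{1}{2\varepsilon}\int_{\Gamma_T}|\lambda\bb{v}\cdot\bb{e}_2-\eta_t|^2 + \frac{1}{2\varepsilon}\int_{\Gamma_T}|\eta_t|^2
\end{equation*}
on the left, together with the extra right-hand-side contributions $\tfrac{\lambda^3}{2\varepsilon}\int_{\Gamma_T}|\eta_t|^2$ and $\tfrac{\lambda^2}{2\varepsilon}\int_{\Gamma_T}|\bb{v}\cdot\bb{e}_2|^2$, which must be absorbed into the left using $\lambda\le 1$. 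Without carrying out this bookkeeping, the claim that ``the energy balance closes even in the rescaled system'' is unsupported. Moreover, the resulting inequality bounds $\lambda\int\mathbb{S}(\nabla\bb{u}):\nabla\bb{u}$, $\lambda\delta\int|\bb{u}|^4$ and $\int|\eta_{tx}|^2$, so a final multiplication by suitable powers of $\lambda$ is needed to pass from $(\bb{u},\eta)$ to $(\tilde{\bb{u}},\tilde\eta)=\lambda(\bb{u},\eta)$; you do not mention this step. These are precisely the points the paper handles explicitly, and your proof as written does not.
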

\begin{proof}
We define an operator
\begin{equation*}%%%%%%%%%%%%%
    \mathcal{T}:\begin{array}{l}\mathcal{P}_{n,m}^{str}\times \mathcal{P}_{n,m}^{fl} \to \mathcal{P}_{n,m}^{str}\times \mathcal{P}_{n,m}^{fl},\\
    (\tilde{\eta},\tilde{\bb{u}}) \mapsto (\eta,\bb{u}),
    \end{array}
\end{equation*}%%----------------------------%%
where $\eta =\eta(\tilde{\bb{u}},\tilde{\eta})$ is obtained in Lemma \ref{SSPlem}, $\rho =\rho(\tilde{\bb{u}})$ is obtained in Lemma \ref{lem1} and $\bb{u}=\bb{u}(\rho,\tilde{\bb{u}},\tilde{\eta})$ is the solution obtained in Lemma \ref{lem2}. As a consequence of these lemmas, mapping $\mathcal{T}$ is continuous and it is compact.

It remains to show that the set 
\begin{equation}\label{schaefferset} %%%%%%%%%%%%%
    \{(\tilde\eta,\tilde{\bb{u}})\in \mathcal{P}_{n,m}^{str}\times \mathcal{P}_{n,m}^{fl}: \lambda \mathcal{T}(\tilde\eta,\tilde{\bb{u}})=(\tilde\eta,\tilde{\bb{u}}), \lambda \in [0,1] \}
\end{equation}%%----------------------------%%
is bounded. We denote $(\eta,\bb{u})= \mathcal{T}(\tilde\eta,\tilde{\bb{u}})$ and emphasize that
points from \eqref{schaefferset} satisfy $\lambda(\eta,\bb{u}) = (\tilde \eta,\tilde{\bb{u}})$. We test \eqref{eq:flap} by $\boldsymbol{\varphi} = \tilde{\bb{u}} = \lambda\bb{u}$ and \eqref{appstruct1b} by $\psi = \eta_t$. Recalling $\rho = \rho(\tilde {\bf u})$ and making similar calculations as in \eqref{appen1} we obtain
\begin{multline*}
\lambda\int_{Q_T} \mathbb S(\nabla {\bf u}):\nabla {\bf u}\di \bb{y} \dd t + \lambda\delta \int_{Q_T} |{\bf u}|^4\di \bb{y} \dd t + \int_{\Gamma_T} |\eta_{tx}|^2 \di x \dd t   \\
+ \varepsilon \int_{Q_T} \rho^{\gamma-2}|\nabla \rho|^2\di \bb{y} \dd t + \frac{\varepsilon\gamma}{\gamma-1}  \int_{Q_T} \rho^\gamma \di \bb{y} \dd t + \varepsilon \delta a\int_{Q_T} \rho^{a-2}|\nabla \rho|^2\di \bb{y} \dd t + \frac{\varepsilon \delta a}{a-1}\int_{Q_T}\rho^a\di \bb{y} \dd t  \\
+ \frac{\lambda}{2\ep}\int_{\Gamma_T} |\bb{v}-\lambda\eta_t|^2\di x \dd t + \frac{\lambda}{2\ep}\int_{\Gamma_T} |\bb{v}|^2 \di x \dd t + \frac{1}{2\ep}\int_{\Gamma_T} |\lambda\bb{v}\cdot \bb{e}_2-\eta_t|^2\di x \dd t + \frac{1}{2\ep}\int_{\Gamma_T} |\eta_t|^2 \di x \dd t \\
= \int_{\Gamma_T} f \eta_t\di x \dd t + \lambda\int_{Q_T} \rho {\bf F}_\delta \cdot{\bf u}\di \bb{y} \dd t + \frac {\varepsilon M \gamma}{\gamma-1}  \int_{Q_T} \rho^{\gamma-1}\di \bb{y} \dd t  + \frac{\varepsilon \delta M a}{a-1} \int_{Q_T} \rho^{a-1}\di \bb{y} \dd t\\
+ \frac {\lambda^3}{2 \varepsilon} \int_{\Gamma_T} |\eta_t|^2\di x \dd t + \frac {\lambda^2}{2 \varepsilon}\int_{\Gamma_T} |\bb{v}\cdot\bb{e}_2|^2\di x \dd t
\end{multline*}
where $\bb{v} = \gamma_{|\hat{\Gamma}^{\tilde{\eta}}}\bb{u}$. The first four terms on the right hand side can be dealt with as in \eqref{appen1}. The last two terms can be easily absorbed to the left hand side as $\lambda \leq 1$. We obtain
\begin{equation*}
    \lambda\int_{Q_T} \mathbb S(\nabla {\bf u}):\nabla {\bf u}\di \bb{y} \dd t + \lambda\delta \int_{Q_T} |{\bf u}|^4\di \bb{y} \dd t + \int_{\Gamma_T} |\eta_{tx}|^2 \di x \dd t \leq C
\end{equation*}
which provides by multiplying with suitable powers of $\lambda$
\begin{equation*}
    \int_{Q_T} \mathbb S(\nabla \tilde{\bb{u}}):\tilde{\bb{u}}\di \bb{y} \dd t + \delta \int_{Q_T} |\tilde{\bb{u}}|^4\di \bb{y} \dd t + \int_{\Gamma_T} |\tilde{\eta}_{tx}|^2 \di x \dd t \leq C, 
\end{equation*}
hence the set \eqref{schaefferset} is bounded. The desired claim then follows by the Schaeffer fixed point theorem.

Finally, since $\rho$ is a solution to \eqref{APP2}, classical theory of parabolic equations implies H\" older regularity of $\rho$.

\end{proof}

\section{Time basis limit $m\to \infty$}
Denote the approximate solution obtained in previous section as $(\rho_{m},\bb{u}_m,\eta_m)$. One obtains from \eqref{APP3} and \eqref{cor1} that $\partial_t \bb{u}_m$ is bounded by a constant independent from $m$ in $L^1(0,T; \text{span}\{\bb{f}_i\}_{1\leq i\leq n})$. This means that $\bb{u}_m$ is bounded in $L^\infty(0,T; \text{span}\{\bb{f}_i\}_{1\leq i\leq n})$, so one can again estimate $\partial_t \bb{u}_m$ in a better space $L_\#^p(0,T; \text{span}\{\bb{f}_i\}_{1\leq i\leq n})$, for any $p<\infty$. Similarly, the equation $\eqref{APP1}$ implies $\partial_{tt}\eta_m\in L^p_\#(0,T; \text{span}\{s_i\}_{1\leq i\leq n})$ for any $p<\infty$. 
This together with \eqref{cor1} allow us to pass to the limit $m\to \infty$ in most terms in the system \eqref{APP1}-\eqref{APP3}. The following lemma allows us to pass to the limit in the trace terms.
\begin{lem}\label{tracelemma}
Let $\bb{u}_m \weak \bb{u}$ weakly in $L^2_\#(0,T;H^1_\#(\Omega))$ and let $\eta_m \weak \eta$ weakly in $L^\infty_\#(0,T;H^2_{\#}(\Gamma))$ and in $H^1_\#(0,T;H^1_{\#,0}(\Gamma))$. Then
\begin{equation*}
    \int_{\Gamma_T} \bb{u}_m(t,x,\eta_m(t,x)) \cdot \boldsymbol\psi(t,x)\di x \dd t \to \int_{\Gamma_T} \bb{u}(t,x,\eta(t,x)) \cdot \boldsymbol\psi(t,x)\di x \dd t
\end{equation*}
for all $\boldsymbol\psi \in C^\infty_{\#,0}(\Gamma_T)$.
\end{lem}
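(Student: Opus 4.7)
My plan is to split the difference into two pieces, one where only the evaluation curve changes and one where the function changes but the curve is fixed, and to handle each by a different mechanism.

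First I would upgrade the convergence of $\eta_m$. The bounds $\eta_m \weak \eta$ in $L^\infty_\#(0,T;H^2_\#(\Gamma))$ and in $H^1_\#(0,T;H^1_{\#,0}(\Gamma))$ provide, via Aubin–Lions (using the compact embedding $H^2(\Gamma) \hookrightarrow\hookrightarrow C(\overline{\Gamma})$ in one spatial dimension), strong convergence $\eta_m \to \eta$ in $C([0,T];C(\overline{\Gamma}))$. In particular, $\|\eta_m-\eta\|_{L^\infty(\Gamma_T)} \to 0$. We also note that by Remark \ref{rem:domains}, $\bb{u}_m$ is continuous across $\hat{\Gamma}^{\eta_m}$, so the one-dimensional fundamental theorem of calculus in the $z$-variable applies pointwise in $(t,x)$ to $\bb{u}_m(t,x,\cdot)$ without jumps.

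Next I write
\begin{equation*}
\int_{\Gamma_T}\!\!\bigl(\bb{u}_m(\cdot,\hat\eta_m) - \bb{u}(\cdot,\hat\eta)\bigr)\cdot\boldsymbol\psi \di x \dd t = \int_{\Gamma_T}\!\!\bigl(\bb{u}_m(\cdot,\hat\eta_m) - \bb{u}_m(\cdot,\hat\eta)\bigr)\cdot\boldsymbol\psi \di x \dd t + \int_{\Gamma_T}\!\!\bigl(\bb{u}_m(\cdot,\hat\eta) - \bb{u}(\cdot,\hat\eta)\bigr)\cdot\boldsymbol\psi \di x \dd t.
\end{equation*}
For the first piece, I would use
\begin{equation*}
|\bb{u}_m(t,x,\hat\eta_m(t,x)) - \bb{u}_m(t,x,\hat\eta(t,x))| \leq |\hat\eta_m(t,x)-\hat\eta(t,x)|^{1/2}\,\|\partial_z\bb{u}_m(t,x,\cdot)\|_{L^2_z},
\end{equation*}
and then Cauchy–Schwarz in $x$ and $t$, giving the bound
\begin{equation*}
\|\boldsymbol\psi\|_{L^\infty(\Gamma_T)}\,L^{1/2}\,T^{1/2}\,\|\eta_m-\eta\|_{L^\infty(\Gamma_T)}^{1/2}\,\|\bb{u}_m\|_{L^2(0,T;H^1(\Omega))},
\end{equation*}
which tends to zero by Step 1 and the uniform bound on $\bb{u}_m$ in $L^2_\#(0,T;H^1_\#(\Omega))$.

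For the second piece, the map $\bb{u} \mapsto \int_{\Gamma_T} \gamma_{|\hat\Gamma^\eta}\bb{u}\cdot\boldsymbol\psi \di x \dd t$ is a bounded linear functional on $L^2_\#(0,T;H^1_\#(\Omega))$, because the Lagrangian trace $\gamma_{|\hat\Gamma^\eta}$ is continuous from $H^1_\#(\Omega)$ into $L^2_\#(\Gamma)$ (with norm controlled by the Lipschitz character of $\hat\eta$, which is uniform since $\eta \in L^\infty(0,T;H^2)$ embeds into $L^\infty(0,T;C^{0,1/2}(\Gamma))$). Hence weak convergence $\bb{u}_m\weak \bb{u}$ directly gives the convergence of this piece to zero. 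The main (and essentially only) obstacle is the first piece: one has to turn weak compactness of $\bb{u}_m$ into something strong enough to accommodate the moving evaluation curve. This is accomplished by the $z$-derivative trick above, which needs the continuity of $\bb{u}_m$ across $\hat\Gamma^{\eta_m}$ — precisely the observation from Remark \ref{rem:domains}.
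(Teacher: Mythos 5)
Your proof is correct and takes a genuinely different route from the paper. The paper performs the change of variables $\tilde{\bb{u}}_m(t,x,z) := \bb{u}_m(t,x,z+\eta_m(t,x))$, shows $\tilde{\bb{u}}_m$ is bounded in $L^2_\#(0,T;H^1_\#(\Omega))$, and identifies the weak limit as $\tilde{\bb{u}}(t,x,z)=\bb{u}(t,x,z+\eta(t,x))$ via the mirror decomposition $\tilde{\bb{u}}_m-\tilde{\bb{u}} = \bb{w}_m(\cdot,z+\eta_m) + \bigl[\bb{u}(\cdot,z+\eta_m)-\bb{u}(\cdot,z+\eta)\bigr]$; the first term is handled by a change of variables that moves the shift onto the (strongly converging) test function, the second by a.e. convergence. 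You instead decompose as (moving curve, same $\bb{u}_m$) plus (fixed curve, $\bb{u}_m-\bb{u}$), use the $z$-integrated fundamental-theorem estimate with Cauchy–Schwarz for the first, and weak continuity of the fixed-$\eta$ trace functional for the second. Both arguments rest on the same two inputs — uniform convergence of $\eta_m$ and the uniform $L^2H^1$ bound on $\bb{u}_m$ — but yours avoids the flattening change of variables and is somewhat more elementary, while the paper's approach gives the stronger intermediate statement $\tilde{\bb{u}}_m \weak \tilde{\bb{u}}$ in $L^2H^1$, which is more reusable. Two small remarks: the embedding you cite should be $H^2(\Gamma)\hookrightarrow C^{1,1/2}(\overline\Gamma)$ (not $C^{0,1/2}$) to get the uniform Lipschitz bound on the graph; and your pointwise estimate through $\partial_z\bb{u}_m$ implicitly uses the $z$-periodicity of $\bb{u}_m$ so that $\bb{u}_m(\cdot,\hat\eta_m)=\bb{u}_m(\cdot,\eta_m)$ and the integration path can be taken within one period — worth stating explicitly, since $\hat\eta_m\to\hat\eta$ need not hold uniformly when $\eta$ crosses a multiple of $2H$.
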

\begin{proof}
Denote $\tilde{\bb{u}}_m(t,x,z) = \bb{u}_m(t,x,z+\eta_m(t,x))$. The Sobolev embedding theorem implies $(\eta_m)_x$ is bounded in $L^\infty(\Gamma_t)$ and therefore $\tilde{\bb{u}}_m$ is bounded in $L^2_\#(0,T;H^1_\#(\Omega))$. We extract a subsequence converging to some $\bb{U}$ weakly in $L^2_\#(0,T;H^1_\#(\Omega))$. Our aim is to identify the limit as $\bb{U}(t,x,z) = \tilde{\bb{u}}(t,x,z) := \bb{u}(t,x,z+\eta(t,x))$. Denote $\bb{w}_m := \bb{u}_m - \bb{u}$. We have
\begin{equation*}
    (\tilde{\bb{u}}_m - \tilde{\bb{u}})(t,x,z) = \bb{w}_m(t,x,z+\eta_m(t,x)) + \bb{u}(t,x,z+\eta_m(t,x)) - \bb{u}(t,x,z+\eta(t,x))
\end{equation*}
Fix $\boldsymbol\varphi \in C^\infty_\#(Q_T)$. Then
\begin{equation*}
    \int_{Q_T} \bb{w}_m(t,x,z+\eta_m(t,x)) \cdot \boldsymbol\varphi(t,x,z)\di \bb{y} \dd t = \int_{Q_T} \bb{w}_m(t,x,z) \cdot \boldsymbol\varphi(t,x,z-\eta_m(t,x))\di \bb{y} \dd t,
\end{equation*}
where $\bb{w}_m$ converges weakly in $L^2_\#(0,T;H^1_\#(\Omega))$ to zero and $\boldsymbol\varphi(t,x,z-\eta_m(t,x))$ converges strongly in, say, $L^2_\#(Q_T)$ to $\boldsymbol\varphi(t,x,z-\eta(t,x))$, since $\eta_m \to \eta$ uniformly in $\Gamma_T$. The same property implies also
\begin{equation*}
    \bb{u}(t,x,z+\eta_m(t,x)) - \bb{u}(t,x,z+\eta(t,x)) \to 0 \quad \text{ a.e. in } Q_T.
\end{equation*}
This proves that $\tilde{\bb{u}}_m \weak \tilde{\bb{u}}$ weakly in $L^2_\#(0,T;H^1_\#(\Omega))$ and the claim of the Lemma follows.% by taking the trace at $z = 0$.
\end{proof}

\noindent We pass to the limit $m \to \infty$ in \eqref{APP1}-\eqref{APP3}. We denote by $(\rho,\bb{u},\eta)$ the limit of $(\rho_m,\bb{u}_m,\eta_m)$. The tripple $(\rho,\bb{u}, \eta)$ fulfills
\begin{equation*}
\begin{split}
&\rho \in W_\#^{1,p}(0,T; L_\#^q(\Omega))\cap L_\#^{p}(0,T; W_\#^{2,q}(\Omega)),\ \mbox{for some }p,q\in (1,2),\\
&\bb{u}\in W_\#^{1,p}(0,T; \text{span}\{\bb{f}_i\}_{1\leq i\leq n}),\ \mbox{for any } p<\infty,\\
&\eta\in W_\#^{2,p}(0,T; \text{span}\{s_i\}_{1\leq i\leq n}),\ \mbox{for any }p<\infty.
\end{split}
\end{equation*}
 The \textbf{structure momentum equation}
    \begin{equation}\label{APP1:m}%%%%%%%%%%%%%
         \int_{\Gamma_T} \eta_t  \psi_t\di x\dd t -    \int_{\Gamma_T} \eta_{xx}   \psi_{xx}\di x \dd t -\int_{\Gamma_T}  \eta_{tx}  \psi_x\di x \dd t -\int_{\Gamma_T}\frac{\eta_t  - \bb{v}\cdot\bb{e}_2}\varepsilon     \psi\di x \dd t=-\int_{\Gamma_T}f\psi\di x \dd t
    \end{equation}%%----------------------------%%
holds for all $\psi \in C_\#^\infty(0,T; \text{span}\{s_i\}_{1\leq i\leq n})$.\\

\noindent
The \textbf{damped continuity equation}
\begin{equation}\label{APP2:m}%%%%%%%%%%%%%
    \partial_t \rho + \nabla \cdot (\rho \bb{u}) -\varepsilon\Delta \rho +\varepsilon \rho=\varepsilon M,
\end{equation}%%----------------------------%%
holds almost everywhere in $Q_T$.\\

\noindent
The \textbf{fluid momentum equation}
\begin{multline}%%%%%%%%%%%%%
    \delta\int_{Q_T}\bb{u}\cdot\partial_t\boldsymbol\varphi\di \bb{y} \dd t + \int_{Q_T} \rho \bb{u} \cdot \partial_t \boldsymbol\varphi\di \bb{y} \dd t + \int_{Q_T} \rho \bb{u}\otimes \bb{u}:\nabla\boldsymbol\varphi\di \bb{y} \dd t +  \int_{Q_T} (\rho^\gamma+\delta\rho^a) \nabla\cdot\boldsymbol\varphi\di \bb{y} \dd t\\
    - \int_{Q_T} \mathbb{S}(\nabla \bb{u}):\nabla\boldsymbol\varphi\di \bb{y} \dd t
    -\delta \int_{Q_T} |\bb{u}|^2\bb{u}\cdot\boldsymbol\varphi\di \bb{y} \dd t-\varepsilon\int_{Q_T}\nabla \rho\otimes\boldsymbol\varphi:\nabla \bb{u}\di \bb{y} \dd t \\
    + \frac\varepsilon2\int_{Q_T}(M-\rho)\bb{u}\cdot\boldsymbol\varphi\di \bb{y} \dd t
    -\int_{\Gamma_T}\frac{\bb{v}-\eta_t\bb{e}_2 }\varepsilon\cdot \boldsymbol\psi\di x \dd t=-\int_{Q_T}\rho\bb{F}_\delta\cdot\boldsymbol\varphi\di \bb{y} \dd t
    \label{APP3:m}
\end{multline}%%----------------------------%%
holds for all $\boldsymbol\varphi\in C_\#^\infty(0,T; \text{span}\{\bb{f}_i\}_{1\leq i\leq n})$, where $\boldsymbol\psi=\gamma_{|\hat{\Gamma}^\eta}\boldsymbol\varphi$ and $\bb{v}=\gamma_{|\hat{\Gamma}^\eta}\bb{u}$ in both \eqref{APP1:m} and \eqref{APP3:m}.

\subsection{Uniform estimates independent of $n$}

First, we take $\phi \in C^\infty_{\#}(0,T)$ and choose $\psi=\phi\eta_t$ in \eqref{APP1:m}, then multiply \eqref{APP2:m} with $\frac{\gamma}{\gamma-1}\phi\rho^{\gamma-1}$, then $\frac{\delta a}{a-1}\phi\rho^{a-1}$ and $\frac{1}{2}\phi|\bb{u}|^2$ and finally choose $\boldsymbol\varphi=\phi\bb{u}$ in \eqref{APP3:m}, and then sum up these identities to obtain
\begin{multline}%%%%%%%%%%%%%
     - \int_0^T\phi_t(t) E_\delta(t)\di t  +\int_{Q_T} \phi\mathbb{S}(\nabla \bb{u}):\nabla \bb{u}\di \bb{y} \dd t +\delta \int_{Q_T}\phi|\bb{u}|^4\di \bb{y} \dd t+\int_{\Gamma_T}\phi|\eta_{tx} |^2\di x \dd t\\
     + \varepsilon\gamma\int_{Q_T} \phi\rho^{\gamma-2}|\nabla \rho|^2\di \bb{y} \dd t+ \frac{\varepsilon\gamma}{\gamma-1} \int_{Q_T}\phi\rho^{\gamma}\di \bb{y} \dd t
     + \varepsilon\delta a\int_{Q_T} \phi\rho^{a-2}|\nabla \rho|^2\di \bb{y} \dd t \\
     +\frac{\varepsilon\delta a}{a-1} \int_{Q_T}\phi\rho^{a}\di \bb{y} \dd t
     + \frac1{\varepsilon}\int_{\Gamma_T} \phi|\bb{v} - \eta_t \bb{e}_2|^2\di x \dd t =\\
     = \int_{\Gamma_T} \phi f\eta_t\di x \dd t  + \int_{Q_T}\phi \rho\bb{u}\cdot\bb{F}_\delta\di \bb{y} \dd t+ \varepsilon\int_{Q_T} M\frac{\gamma}{\gamma-1}\phi\rho^{\gamma-1}\di \bb{y} \dd t+\varepsilon\delta\int_{Q_T} M\frac{a}{a-1}\phi\rho^{a-1}\di \bb{y} \dd t\label{EEtestfction}
\end{multline}%%----------------------------%%
where
\begin{equation}%%%%%%%%%%%%%
    E_{\delta}(t):=\int_{\Omega}\left( \frac12\rho|\bb{u}|^2+\frac{\delta}{2}|\bb{u}|^2+ \frac{1}{\gamma-1}\rho^\gamma+\frac{\delta}{a-1}\rho^a\right)(t)\di \bb{y} +\int_\Gamma\left( \frac12|\eta_{t}|^2+\frac12|\eta_{xx}|^2\right)(t)\di x.\label{Edelta.def}
\end{equation}%%----------------------------%%123
Choose $\phi=1$ to get
\begin{multline*}%%%%%%%%%%%%%
    \int_{Q_T} \mathbb{S}(\nabla \bb{u}):\nabla \bb{u}\di \bb{y} \dd t+\delta \int_{Q_T}|\bb{u}|^4\di \bb{y} \dd t+\int_{\Gamma_T}|\eta_{tx} |^2\di x \dd t + \varepsilon\gamma\int_{Q_T} \rho^{\gamma-2}|\nabla \rho|^2\di \bb{y} \dd t \\
    + \frac{\varepsilon\gamma}{\gamma-1} \int_{Q_T}\rho^{\gamma} \di \bb{y} \dd t
     + \varepsilon\delta a\int_{Q_T} \rho^{a-2}|\nabla \rho|^2\di \bb{y} \dd t+\frac{\varepsilon\delta a}{a-1} \int_{Q_T}\rho^{a}\di \bb{y} \dd t
     + \frac1{\varepsilon}\int_{\Gamma_T} |\bb{v} - \eta_t \bb{e}_2|^2 \di x \dd t = \\
     =\int_{\Gamma_T} f\eta_t \di x \dd t + \int_{Q_T} \rho\bb{u}\cdot\bb{F}_\delta\di \bb{y} \dd t+ \varepsilon\int_{Q_T} M\frac{\gamma}{\gamma-1}\rho^{\gamma-1}\di \bb{y} \dd t+\varepsilon\delta\int_{Q_T} M\frac{a}{a-1}\rho^{a-1}\di \bb{y} \dd t. %\label{very:important1}.
\end{multline*}%%----------------------------%%
We deduce similarly to \eqref{appen1}
\begin{multline}%%%%%%%%%%%%%
    \int_{Q_T} \mathbb{S}(\nabla \bb{u}):\nabla \bb{u}\di \bb{y} \dd t+\delta \int_{Q_T}|\bb{u}|^4\di \bb{y} \dd t+\int_{\Gamma_T}|\eta_{tx} |^2\di x \dd t + \varepsilon\gamma\int_{Q_T} \rho^{\gamma-2}|\nabla \rho|^2\di \bb{y} \dd t \\
    + \frac{\varepsilon\gamma}{\gamma-1} \int_{Q_T}\rho^{\gamma} \di \bb{y} \dd t
     + \varepsilon\delta a\int_{Q_T} \rho^{a-2}|\nabla \rho|^2\di \bb{y} \dd t+\frac{\varepsilon\delta a}{a-1} \int_{Q_T}\rho^{a}\di \bb{y} \dd t
     \\
     + \frac1{\varepsilon}\int_{\Gamma_T} |\bb{v} - \eta_t \bb{e}_2|^2\di x \dd t \leq C(\varepsilon,\delta). \label{another:important1}
\end{multline}%%----------------------------%%
Next, we take a sequence of $\phi_k \to \chi_{[s,t]}$, we integrate over $(0,T)$ w.r.t. $s$ and take a supremum over $t$ to deduce
\begin{multline}\label{very:important2}%%%%%%%%%%%%%
    \sup\limits_{t\in (0,T)} E_\delta(t) \leq \frac1T\int_0^T E_\delta(s)\di s + \int_{\Gamma_T} | f\eta_t | \di x \dd \tau \\
    + \int_{Q_T} | \rho\bb{u}\cdot\bb{F}_\delta | \di \bb{y} \dd \tau+ \varepsilon M\int_{Q_T} \left( \frac{\gamma}{\gamma-1}\rho^{\gamma-1} + \frac{\delta a}{a-1}\rho^{a-1}\right)\di \bb{y} \dd \tau.
\end{multline}%%----------------------------%%
The last four terms can be bounded as in \eqref{appen1}. Moreover,  \eqref{another:important1} implies
\begin{equation*}%%%%%%%%%%%%%
    \int_{Q_T} \left( \frac12\rho|\bb{u}|^2+\frac{\delta}{2}|\bb{u}|^2+ \frac{1}{\gamma-1}\rho^\gamma+\frac{\delta}{a-1}\rho^a \right)\di \bb{y} \dd t+ \int_{\Gamma_T}\frac12|\eta_t|^2\di x \dd t \leq C(\varepsilon,\delta).
\end{equation*}%%----------------------------%%
We choose  $\psi=\eta$ in \eqref{APP1:m} to obtain $\int_{\Gamma_T} |\eta_{xx}|^2 \leq C(\varepsilon,\delta)$. Thus,  \eqref{very:important2} and previous estimates yield
\begin{equation}\label{another:important2}%%%%%%%%%%%%%
    \sup\limits_{t\in (0,T)} E_\delta(t) \leq C(\varepsilon,\delta).
\end{equation}%%----------------------------%%
We showed that \eqref{cor1} still holds and moreover we have additional bounds independent of $n \in \mathbb{N}$ from \eqref{another:important2}, namely
\begin{equation}%%%%%%%%%%%%%
\begin{split}
    \| \eta_{xx} \|_{L^\infty(0,T;L^2(\Gamma))}&\leq C(\varepsilon,\delta), \\
    \| \eta_{t} \|_{L^\infty(0,T;L^2(\Gamma))}&\leq C(\varepsilon,\delta), \\
    \|\bb{u}\|_{L^\infty(0,T;L^2(\Omega))}&\leq C(\varepsilon,\delta), \\
    \|\sqrt{\rho}\bb{u} \|_{L^\infty(0,T;L^2(\Omega))}&\leq C(\varepsilon,\delta),\\
    \|\rho\|_{L^\infty(0,T;L^a(\Omega))}&\leq C(\varepsilon,\delta).
    \label{cor12}
\end{split}
\end{equation}%%----------------------------%%
%%%%%%%%%%%%%%
%%%%%%%%%%%%%%
%%%%%%%%%%%%%%
%%%%%%%%%%%%%%
\section{Spatial basis limit $n\to \infty$}
Denote the solution obtained in previous section as $(\rho_n,\bb{u}_n,\eta_n)$. The uniform bounds \eqref{cor1} and \eqref{cor12} give rise to convergences 
\begin{equation*}%%%%%%%%%%%%%
\begin{split}
    &\rho_n \weak \rho \quad \text{ weakly}^* \text{ in } L^\infty_\#(0,T;L^a_\#(\Omega)) \quad \text{ and weakly in } W^{1,p}_\#(0,T;L^q_\#(\Omega)) \cap L^p_\#(0,T;W^{2,q}_\#(\Omega)), \\
    &\bb{u}_n \weak \bb{u} \quad \text { weakly}^* \text{ in } L^\infty_\#(0,T;L^2_\#(\Omega)) \quad \text{ and weakly in } L^2_\#(0,T;H^1_\#(\Omega)), \\
    &\eta_n \weak \eta \quad \text{ weakly}^* \text{ in } L^\infty_\#(0,T;H^2_{\#}(\Gamma)) \quad \text{ and weakly in } H^1_\#(0,T;H^1_{\#,0}(\Gamma)),
\end{split}
\end{equation*}%%----------------------------%%
for some $p,q \in (1,2)$. Our goal now is to pass to the limit $n\to\infty$ in \eqref{APP1:m}, \eqref{APP2:m}, \eqref{APP3:m} and \eqref{EEtestfction}.

\subsection{Limit in the structure momentum equation}
First, $\eqref{APP1:m}$ is a linear equation and thus the weak convergence is sufficient to claim
\begin{equation}\label{APP1:m2}%%%%%%%%%%%%%
         \int_{\Gamma_T} \eta_t  \psi_t\di x \dd t -    \int_{\Gamma_T} \eta_{xx}   \psi_{xx}\di x \dd t -\int_{\Gamma_T}  \eta_{tx}  \psi_x\di x \dd t -\int_{\Gamma_T}\frac{\eta_t  - \bb{v}\cdot\bb{e}_2}\varepsilon     \psi\di x \dd t=-\int_{\Gamma_T}f\psi\di x \dd t,
\end{equation}%%----------------------------%%
for all $\psi \in C^\infty_{\#,0}(\Gamma_T)$. We have $\|\partial_{tt}\eta_n\|_{L^2(0,T; (H^2_{\#,0}(\Gamma))^*)}\leq C(\varepsilon,\delta)$ due to $\eqref{APP1:m}$. This together with  $\|\partial_{t}\eta_n\|_{L^2_\#(0,T; H^{1}(\Gamma))}\leq C(\varepsilon,\delta)$ imply that 
\begin{equation}\label{etat:strong}%%%%%%%%%%%%%
    \partial_t\eta_n \to \partial_t \eta \quad \text{ strongly in } L^2_\#(\Gamma_T).
\end{equation}%%----------------------------%%
We choose $\psi = \eta_n$ in $\eqref{APP1:m}$ and $\psi=\eta$ in $\eqref{APP1:m2}$ and we compare these two identities to conclude
\begin{equation}\label{etaxx:strong}%%%%%%%%%%%%%
   \int_{\Gamma_T} |\partial_{xx}\eta_n|^2\di x \dd t \to \int_{\Gamma_T} |\partial_{xx} \eta|^2\di x \dd t.
\end{equation}%%----------------------------%%

\subsection{Limit in the continuity equation}
We proceed to a limit in the continuity equation. Estimates \eqref{cor1} and \eqref{cor12} yield that (upon passing to a suitable subsequence)
\begin{equation}\label{APP2:m2}%%%%%%%%%%%%%
    \partial_t \rho + \nabla \cdot (\rho \bb{u}) -\varepsilon\Delta \rho +\varepsilon \rho=\varepsilon M
\end{equation}%%----------------------------%%
almost everywhere in $Q_T$. We multiply \eqref{APP2:m} by $\rho_n$, integrate the resulting equation over $Q_T$ and we pass to the limit $n\to \infty$. We compare the result with \eqref{APP2:m2} multiplied by $\rho$ and integrated over $Q_T$. We deduce
\begin{equation*}%%%%%%%%%%%%%
    \int_{Q_T}|\nabla\rho_n|^2\di \bb{y} \dd t \to \int_{Q_T}|\nabla\rho|^2\di \bb{y} \dd t
\end{equation*}%%----------------------------%%
so
\begin{equation}\label{rho:strong}%%%%%%%%%%%%%
    \nabla\rho_n \to \nabla\rho \quad \text{ strongly in } L^2(Q_T).
\end{equation}%%----------------------------%%

\subsection{Limit in the fluid momentum equation}\label{s:Momn}
We start with the observation that bounds \eqref{cor1} allow to bound $\rho\bb{u}$ in $L^\frac{20}{9}(Q_T)$, which in turn implies $\|\nabla\rho_n\|_{L^\frac{20}{9}(Q_T)} \leq C(\ep,\delta)$. Consequently, we use \eqref{APP3}, to obtain
\begin{equation*}%%%%%%%%%%%%%
    \|\partial_t((\delta+\rho_n)\bb{u}_n)\|_{(L^{20}_\#(0,T; W^{2,p}_\#(\Omega)))^*} \leq C(\varepsilon,\delta)
\end{equation*}%%----------------------------%%
for some $p > 2$. Moreover, uniform bounds yield $\|(\delta+\rho_n)\bb{u}_n\|_{L^\infty(0,T; L^{\frac{2a}{a+1}}(\Omega))} \leq C(\varepsilon,\delta)$ and we infer $\|(\delta+\rho_n)\bb{u}_n\|_{L^\infty_\#(0,T; (W^{s,2}_\#(\Omega))^*)} \leq C(\varepsilon,\delta)$ for some $s<1$. This however means that
\begin{equation}\label{eq:strongnegative}
(\delta+\rho_n)\bb{u}_n \to (\delta+\rho)\bb{u} \quad \text{ strongly in }    L^\infty_\#(0,T; (W^{s',2}_\#(\Omega))^*)
\end{equation}
for some $s<s'<1$, and consequently by the weak convergence $\bb{u}_n\rightharpoonup \bb{u}$ in $L^2_\#(0,T;H^1_\#(\Omega))$
\begin{equation}\label{eq:convtermlimit}%%%%%%%%%%%%%
    (\rho_n+\delta)\bb{u}_n \otimes \bb{u}_n \rightharpoonup (\rho+\delta)\bb{u} \otimes \bb{u} \quad \text{ weakly in } L^p_\#(Q_T) \text{ for some } p>1.
\end{equation}%%----------------------------%%
Since $0\leq \frac{\rho_n}{\rho_n+\delta}<1$ and $\rho_n \to \rho$ a.e. in $Q_T$, one concludes that $\frac{\rho_n}{\rho_n+\delta} \to \frac{\rho}{\rho+\delta}$ in $L^q_\#(Q_T)$ for any $q\in [1,\infty)$ so
\begin{equation*}%%%%%%%%%%%%%
    \frac{\rho_n}{\rho_n+\delta}(\rho_n+\delta)\bb{u}_n \otimes \bb{u}_n=\rho_n\bb{u}_n \otimes \bb{u}_n \rightharpoonup \rho\bb{u} \otimes \bb{u} \quad \text{in } L^1_\#(Q_T).
\end{equation*}%%----------------------------%%
The weak convergence $\bb{u}_n\rightharpoonup \bb{u}$ in $L^2_\#(0,T;H^1_\#(\Omega))$ and the strong convergence of $\nabla \rho_n$ in $L^2_\#(Q_T)$ obtained in \eqref{rho:strong} yield 
\begin{equation*}%%%%%%%%%%%%%
    \int_{Q_T}\nabla \rho_n\otimes\boldsymbol\varphi:\nabla \bb{u}_n\di \bb{y} \dd t \to \int_{Q_T}\nabla \rho\otimes\boldsymbol\varphi:\nabla \bb{u}\di \bb{y} \dd t,
\end{equation*}%%----------------------------%%
for any $\boldsymbol\varphi \in C_\#^\infty(Q_T)$. The remaining terms are dealt with in a straightforward fashion by means of uniform bounds and Lemma \ref{tracelemma} is used to pass to the limit in the trace term. Therefore, when we let $n \to \infty$ in \eqref{APP3:m} we end up with
\begin{multline}%%%%%%%%%%%%%
    \delta\int_{Q_T}\bb{u}\cdot\partial_t\boldsymbol\varphi\di \bb{y} \dd t + \int_{Q_T} \rho \bb{u} \cdot \partial_t \boldsymbol\varphi\di \bb{y} \dd t + \int_{Q_T} \rho \bb{u}\otimes \bb{u}:\nabla\boldsymbol\varphi\di \bb{y} \dd t +  \int_{Q_T} (\rho^\gamma+\delta\rho^a) \nabla\cdot\boldsymbol\varphi\di \bb{y} \dd t\\
    - \int_{Q_T} \mathbb{S}(\nabla \bb{u}):\nabla\boldsymbol\varphi\di \bb{y} \dd t
    -\delta \int_{Q_T} |\bb{u}|^2\bb{u}\cdot\boldsymbol\varphi\di \bb{y} \dd t-\varepsilon\int_{Q_T}\nabla \rho\otimes\boldsymbol\varphi:\nabla \bb{u}\di \bb{y} \dd t\\
    + \frac\varepsilon2\int_{Q_T}(M-\rho)\bb{u}\cdot\boldsymbol\varphi\di \bb{y} \dd t
    -\int_{\Gamma_T}\frac{\bb{v}-\eta_t\bb{e}_2 }\varepsilon\cdot \boldsymbol\psi\di x \dd t=\int_{Q_T}\rho\bb{F}_\delta\cdot\boldsymbol\varphi\di \bb{y} \dd t,
    \label{APP3:m2}
\end{multline}%%----------------------------%%
for all $\boldsymbol\varphi \in C_{\#}^\infty(Q_T)$ and $\boldsymbol\psi\in C_{\#}^\infty(\Gamma_T)$ such that $\boldsymbol\varphi (t,x,\hat{\eta}(t,x))=\boldsymbol\psi(t,x)$ on $\Gamma_T$, where $\bb{v} =  \gamma_{|\hat{\Gamma}^\eta} \bb{u}$.

%0,T;C^\infty_0(\overline{\Gamma})

\subsection{Limit in the energy inequality}
The information gathered above is clearly sufficient to pass to the limit in all terms on the right hand side of \eqref{EEtestfction}. In order to pass to the limit on the left hand side we first note that \eqref{etat:strong}, \eqref{etaxx:strong} together with \eqref{eq:convtermlimit} and the information about the sequence of densities allows us to pass to the limit in the first term on the left hand side of \eqref{EEtestfction}. Finally, we assume that $\phi \in C^\infty_\#(0,T)$ satisfies moreover $\phi \geq 0$ and we use weak lower semicontinuity of convex functions to deduce that in the limit, \eqref{EEtestfction} holds as an inequality
\begin{multline}%%%%%%%%%%%%%
     - \int_0^T\phi_t(t) E_\delta(t) \di t +\int_{Q_T} \phi\mathbb{S}(\nabla \bb{u}):\nabla \bb{u}\di \bb{y} \dd t+\delta \int_{Q_T}\phi|\bb{u}|^4 \di \bb{y} \dd t + \int_{\Gamma_T}\phi|\eta_{tx} |^2\di x \dd t \\
     + \varepsilon\gamma\int_{Q_T} \phi\rho^{\gamma-2}|\nabla \rho|^2\di \bb{y} \dd t
     + \frac{\varepsilon\gamma}{\gamma-1} \int_{Q_T}\phi\rho^{\gamma}\di \bb{y} \dd t+ \varepsilon\delta a\int_{Q_T} \phi\rho^{a-2}|\nabla \rho|^2\di \bb{y} \dd t
     \\
     +\frac{\varepsilon\delta a}{a-1} \int_{Q_T}\phi\rho^{a}\di \bb{y} \dd t
     + \frac1{\varepsilon}\int_{\Gamma_T} \phi|\bb{v} - \eta_t \bb{e}_2|^2\di x \dd t 
     \leq \int_{\Gamma_T} \phi f\eta_t\di x \dd t 
     \\
     + \int_{Q_T}\phi \rho\bb{u}\cdot\bb{F}_\delta\di \bb{y} \dd t+ \varepsilon\int_{Q_T} M\frac{\gamma}{\gamma-1}\phi\rho^{\gamma-1}\di \bb{y} \dd t+\varepsilon\delta\int_{Q_T} M\frac{a}{a-1}\phi\rho^{a-1}\di \bb{y} \dd t\label{EEtestfction_lim1}
\end{multline}%%----------------------------%%
where $E_\delta$ is defined by \eqref{Edelta.def}.

% First, it is easy to see that all the terms on the right-hand side of $\eqref{very:important1}$ converge to their limits. On the other hand, the terms on the left hand side are convex functions, so we recover that the limit of $\eqref{very:important1}$ holds as an inequality:

% Next, let us notice that from \eqref{eq:strongnegative} and the weak convergence of $\bb{u}_n$ in $L^2_\#(0,T;H^1_\#(\Omega))$, one has that 
% \begin{equation*}
% \int_{Q_T}(\delta|\bb{u}_n|^2+ \rho_n|\bb{u}_n|^2)\to \int_{Q_T}(\delta|\bb{u}|^2+\rho|\bb{u}|^2).
% \end{equation*}
% This convergence, combined with $\eqref{etat:strong}$, $\eqref{etaxx:strong}$ and the uniform bounds imply that we can pass to the limit in the right-hand side of $\eqref{another:important2}$,\footnote{how do we pass to the limit on the left hand side? Maybe it will be better to pass to the limit in \eqref{EEtestfction}} so 
% \begin{eqnarray}\label{very:important2a}%%%%%%%%%%%%%
%     \mathcal{E}_\delta:=\sup\limits_{t\in (0,T)} E_\delta(t) \leq \frac1T\int_0^T E_\delta(s)~ds+ \int_{\Gamma_T}  f\eta_t  + \int_{Q_T} \rho\bb{u}\cdot\bb{F}_\delta+ \varepsilon\int_{Q_T} M\frac{\gamma}{\gamma-1}\rho^{\gamma-1}+\varepsilon\delta\int_{Q_T} M\frac{a}{a-1}\rho^{a-1}
% \end{eqnarray}%%----------------------------%%

\subsection{Uniform bounds independent of $\varepsilon$}

We use the energy inequality \eqref{EEtestfction_lim1} to deduce estimates of $(\rho,\bb{u},\eta)$ independent of $\ep$. We start by taking $\phi = 1$ in \eqref{EEtestfction_lim1} to get 
 \begin{multline}%%%%%%%%%%%%%
     \int_{Q_T} \mathbb{S}(\nabla \bb{u}):\nabla \bb{u}\di \bb{y} \dd t+\delta \int_{Q_T}|\bb{u}|^4\di \bb{y} \dd t+\int_{\Gamma_T}|\eta_{tx} |^2\di x \dd t + \varepsilon\gamma\int_{Q_T} \rho^{\gamma-2}|\nabla \rho|^2\di \bb{y} \dd t\\
     + \frac{\varepsilon\gamma}{\gamma-1} \int_{Q_T}\rho^{\gamma}\di \bb{y} \dd t 
       + \varepsilon\delta a\int_{Q_T} \rho^{a-2}|\nabla \rho|^2\di \bb{y} \dd t+\frac{\varepsilon\delta a}{a-1} \int_{Q_T}\rho^{a}\di \bb{y} \dd t
      + \frac1{\varepsilon}\int_{\Gamma_T} |\bb{v} - \eta_t\bb{e}_2|^2\di x \dd t \\
      \leq \int_{\Gamma_T} f\eta_t\di x \dd t  + \int_{Q_T} \rho\bb{u}\cdot\bb{F}_\delta\di \bb{y} \dd t+ \varepsilon\int_{Q_T} M\frac{\gamma}{\gamma-1}\rho^{\gamma-1}\di \bb{y} \dd t+\varepsilon\delta\int_{Q_T} M\frac{a}{a-1}\rho^{a-1}\di \bb{y} \dd t. \label{very:important1a}
 \end{multline}%%----------------------------%%
The estimates here need to be more delicate than in the previous section as we no longer have directly information about the density independent of $\ep$ on the left hand side of \eqref{very:important1a}. Therefore we introduce (recall \eqref{Edelta.def})
 \begin{equation}\label{Edeltadef}%%%%%%%%%%%%%
     \mathcal{E}_\delta :=\sup\limits_{t\in (0,T)} E_\delta(t). 
 \end{equation}%%----------------------------%%
We take $\phi \to \chi_{[s,t]}$ in \eqref{EEtestfction_lim1}, we integrate over $(0,T)$ with respect to $s$ and finally we take the supremum over $t$ to get
 \begin{multline}\label{very:important2a}%%%%%%%%%%%%%
     \mathcal{E}_\delta \leq \frac1T\int_0^T E_\delta(s) \di s+ \int_{\Gamma_T}  f\eta_t\di x \dd t  + \int_{Q_T} \rho\bb{u}\cdot\bb{F}_\delta\di \bb{y} \dd t\\
     + \varepsilon\int_{Q_T} M\frac{\gamma}{\gamma-1}\rho^{\gamma-1}\di \bb{y} \dd t+\varepsilon\delta\int_{Q_T} M\frac{a}{a-1}\rho^{a-1}\di \bb{y} \dd t.
 \end{multline}%%----------------------------%%
Our goal is therefore to bound the terms on the right-hand sides of \eqref{very:important1a} and \eqref{very:important2a}. The first, third and fourth terms on the right-hand side of \eqref{very:important1a} can be absorbed as in \eqref{appen1}. The second term has to be estimated in a different way. Let $p > 1$ be small and let $q = \frac{p}{p-1}$. We have
\begin{multline*}%%%%%%%%%%%%%
    \int_{Q_T}\rho\bb{u}\cdot\bb{F}_\delta\di \bb{y} \dd t\leq C \|\rho\|_{L^\infty(0,T; L^p(\Omega))} \|\bb{u}\|_{L^2(0,T;L^q(\Omega))} \leq C \|\rho\|_{L^\infty(0,T; L^p(\Omega))} \|\bb{u}\|_{L^2(0,T;H^1(\Omega))} \\
    \leq C(s,\delta)(1+ \mathcal{E}_\delta^s) + \frac{\delta}2 \left( \int_{Q_T} \mathbb{S}(\nabla \bb{u}):\nabla \bb{u}\di \bb{y} \dd t+ \int_{Q_T}|\bb{u}|^4\di \bb{y} \dd t \right)
\end{multline*}%%----------------------------%%
for $s>0$ as small as we want, where we interpolated $L^p$ between $L^1$ and $L^a$. Provided $\delta<1$, these terms can be absorbed so it leads to 
\begin{multline*}%%%%%%%%%%%%%
     \int_{Q_T} \mathbb{S}(\nabla \bb{u}):\nabla \bb{u}\di \bb{y} \dd t+\delta \int_{Q_T}|\bb{u}|^4\di \bb{y} \dd t+\int_{\Gamma_T}|\eta_{tx} |^2\di x \dd t + \varepsilon\gamma\int_{Q_T} \rho^{\gamma-2}|\nabla \rho|^2\di \bb{y} \dd t\\
     + \frac{\varepsilon\gamma}{\gamma-1} \int_{Q_T}\rho^{\gamma} \di \bb{y} \dd t
      + \varepsilon\delta a\int_{Q_T} \rho^{a-2}|\nabla \rho|^2\di \bb{y} \dd t+\frac{\varepsilon\delta a}{a-1} \int_{Q_T}\rho^{a}\di \bb{y} \dd t
     + \frac1{\varepsilon}\int_{\Gamma_T} |\bb{v} - \eta_t\bb{e}_2|^2\di x \dd t
     \\
     \leq C(s,\delta)(1+ \mathcal{E}_\delta^s).
\end{multline*}%%----------------------------%%
The last four terms on the right hand side of \eqref{very:important2a} are treated the same way, hence it remains to show
\begin{equation}\label{eq:circular_epsilon}
    \int_0^T E_\delta(s)\di s \leq C(1+\mathcal{E}_\delta^{\beta})
\end{equation}
for some $\beta < 1$.

We observe that
\begin{equation*}
    \int_{Q_T} \frac{1}{2}(\rho + \delta)|\bb{u}|^2\di \bb{y} \dd t + \int_{\Gamma_T} \frac12 |\eta_t|^2\di x \dd t \leq C(s,\delta)(1 + \mathcal{E}_\delta^{\frac{s}{2} + \frac{s}{a}} + \mathcal{E}_\delta^s).
\end{equation*}
We multiply \eqref{APP2:m} by $\rho$ and integrate over $Q_T$ to get
\begin{multline}%%%%%%%%%%%%%
    \varepsilon\int(\rho^2+|\nabla\rho|^2)\di \bb{y} \dd t = \int_{Q_T} -\frac12 \rho^2 \nabla\cdot\bb{u}\di \bb{y} \dd t + \int_{Q_T} \ep M \rho\di \bb{y} \dd t \\
    \leq \left(\int_{Q_T} \rho^{4}\di \bb{y} \dd t\right)^{\frac12} \| \bb{u}\|_{L^2(0,T; H^1(\Omega))}+C  %\\
    \leq \left(\int_{Q_T} \rho^{a}\di \bb{y} \dd t\right)^{\frac2a} C(s,\delta)(1+ \mathcal{E}_\delta^s) \leq C(s,\delta)(1+ \mathcal{E}_\delta^{s+\frac2a}). \label{eq:epgradrho}
\end{multline}%%----------------------------%%
Next, we choose $\psi = \eta$ in \eqref{APP1:m} and sum up the resulting equation with \eqref{APP3:m} with the choice $\boldsymbol\varphi=\eta\bb{e}_2$. Most of the calculations can be done in the same way as in Section \ref{eta:est:sec}, however we need to estimate several additional terms multiplied by approximation parameters, namely
\begin{equation*}
    \left|\int_{Q_T}\delta\bb{u}\cdot\eta_t\bb{e}_2\di \bb{y} \dd t\right| \leq C(\delta)\|\bb{u}\|_{L^4(Q_T)}\|\eta_t\|_{L^2(0,T;L^\infty(\Gamma))} \leq C(s,\delta)(1+\mathcal{E}_\delta^{\frac34 s}),
\end{equation*}
\begin{multline*}
    \left|\int_{Q_T}\delta|\bb{u}|^2\bb{u}\cdot\eta\bb{e}_2\di \bb{y} \dd t\right| \leq C(\delta)\|\bb{u}\|^3_{L^4(Q_T)}\|\eta\|_{L^4(\Gamma_T)} \leq C(s,\delta)(1+\mathcal{E}_\delta^{\frac34 s})(\|\eta_x\|_{L^2(\Gamma_T)} + \|\eta_t\|_{L^2(\Gamma_T)}) \\ 
     \leq C(s,\delta)(1+\mathcal{E}_\delta^{\frac32 s}) + \frac{1}{16}\|\eta_{xx}\|^2_{L^2(\Gamma_T)},
\end{multline*}
\begin{multline*}
    \left|\frac{\varepsilon}{2}\int_{Q_T}(M-\rho)\bb{u}\cdot\eta\bb{e}_2\di \bb{y} \dd t\right| \leq C(\delta)(1+\|\rho\|_{L^\infty(0,T;L^p(\Omega))})\|\bb{u}\|_{L^2(0,T;L^q(\Omega))}\|\eta\|_{L^2(0,T;L^\infty(\Gamma))} \\
     \leq  C(s,\delta)(1+\mathcal{E}_\delta^{2s}) + \frac{1}{16}\|\eta_{xx}\|^2_{L^2(\Gamma_T)},
\end{multline*}
and
\begin{multline*}
    \left|\varepsilon\int_{Q_T}\nabla\rho\otimes(\eta\bb{e}_2):\nabla\bb{u}\di \bb{y} \dd t\right| \leq C(\delta)\|\sqrt{\varepsilon}\nabla\rho\|_{L^2(Q_T)}\|\nabla\bb{u}\|_{L^2(Q_T)}\|\eta\|_{L^\infty(\Gamma_T)} \\
     \leq  C(s,\delta)(1+\mathcal{E}_\delta^{2s+\frac{2}{a}}) + \frac{1}{16}\|\eta_{xx}\|^2_{L^2(\Gamma_T)}.
\end{multline*}
Eventually we end up with the estimate
\begin{equation*}%%%%%%%%%%%%%
    \int_{\Gamma_T} |\eta_{xx}|^2\di x \dd t\leq C(s,\delta)(1+ \mathcal{E}_\delta^{s'}),
\end{equation*}%%----------------------------%%
for some $0<s'<1$. 

It remains to show
\begin{equation}\label{eq:Bog_epsilon}%%%%%%%%%%%%%
    \int_{\Omega}\left(\frac{1}{\gamma-1}\rho^\gamma+\frac{\delta}{a-1}\rho^a\right)\di \bb{y} \dd t \leq C(s,\delta)(1+ \mathcal{E}_\delta^{s''}),
\end{equation}%%----------------------------%%
for some $0<s''<1$ similarly to Section \ref{Bog:section}. To this end we use  $\boldsymbol\varphi_h$ defined in \eqref{eq:varphi_h_def} as a test function in \eqref{APP3:m2}. As above in the estimate of second spatial derivatives of $\eta$, we obtain four more terms to estimate. The term $\delta\bb{u}\cdot\partial_t\boldsymbol\varphi_h$ is handled similarly as $\rho\bb{u}\cdot\partial_t\boldsymbol\varphi_h$. The remaining three additional terms are easy to handle due to the estimate
\begin{equation*}
    \|\boldsymbol\varphi_h\|_{L^\infty(Q_T)} \leq \left\|\mathcal{B}_\Omega\left[\rho^\alpha-\int_\Omega\rho^\alpha\di x \right]\right\|_{L^\infty(Q_T)} \leq C
\end{equation*}
which follows from \eqref{eq:Bogovskii_infty}. Therefore
\begin{equation*}
    \left|\int_{Q_T}\delta|\bb{u}|^2\bb{u}\cdot\boldsymbol\varphi_h\di \bb{y} \dd t\right| \leq C(\delta)\|\bb{u}\|^3_{L^4(Q_T)} \leq C(s,\delta)(1+\mathcal{E}_\delta^{\frac34 s}),
\end{equation*}
\begin{equation*}
    \left|\frac{\varepsilon}{2}\int_{Q_T}(M-\rho)\bb{u}\cdot\boldsymbol\varphi_h\di \bb{y} \dd t\right| \leq C(\delta)(1+\|\rho\|_{L^\infty(0,T;L^p(\Omega))})\|\bb{u}\|_{L^2(0,T;L^q(\Omega))} \leq  C(s,\delta)(1+\mathcal{E}_\delta^{s}),
\end{equation*}
and
\begin{equation*}
    \left|\varepsilon\int_{Q_T}\nabla\rho\otimes\boldsymbol\varphi_h:\nabla\bb{u}\di \bb{y} \dd t\right| \leq C(\delta)\|\sqrt{\varepsilon}\nabla\rho\|_{L^2(Q_T)}\|\nabla\bb{u}\|_{L^2(Q_T)} \leq C(s,\delta)(1+\mathcal{E}_\delta^{s+\frac{1}{a}}).
\end{equation*}
In the second part of this procedure we use the test function $\boldsymbol\varphi = \varphi_h\bb{e}_2$ in \eqref{APP3:m2} with $\varphi_h$ defined in \eqref{eq:varphi_h_def2}. The estimates are again either similar to those in Section \ref{Bog:section} or to those presented above and we recover \eqref{eq:Bog_epsilon}. This however means that \eqref{eq:circular_epsilon} is proved which yields 
\begin{equation}%%%%%%%%%%%%%
     \mathcal{E}_\delta \leq C(\delta), \label{uniform:est2}
\end{equation}%%----------------------------%%
and
\begin{multline}%%%%%%%%%%%%%
    \int_{Q_T} \mathbb{S}(\nabla \bb{u}):\nabla \bb{u}\di \bb{y} \dd t+\delta \int_{Q_T}|\bb{u}|^4\di \bb{y} \dd t+\int_{\Gamma_T}|\eta_{tx} |^2\di x \dd t
    + \varepsilon\gamma\int_{Q_T} \rho^{\gamma-2}|\nabla \rho|^2\di \bb{y} \dd t\\
    + \frac{\varepsilon\gamma}{\gamma-1} \int_{Q_T}\rho^{\gamma}\di \bb{y} \dd t+ \varepsilon a\delta\int_{Q_T} \rho^{a-2}|\nabla \rho|^2\di \bb{y} \dd t+\frac{\varepsilon\delta a}{a-1} \int_{Q_T}\rho^{a+1}\di \bb{y} \dd t\\
     + \frac1{\varepsilon}\int_{\Gamma_T} |\bb{v} - \eta_t\bb{e}_2|^2\di x \dd t \leq  C(\delta). \label{uniform:est1}
\end{multline}%%----------------------------%%

\subsection{Coupled back momentum equation} 
We sum up the momentum equation \eqref{APP3:m2} for test functions $(\boldsymbol\varphi,\psi)$ and the  structure momentum equation \eqref{APP1:m2} for test function $\psi$. This way the penalization terms get cancelled and we obtain that $(\rho,\bb{u},\eta)$ satisfy the coupled momentum equation
\begin{multline}%%%%%%%%%%%%%
    \delta\int_{Q_T}\bb{u}\cdot\partial_t\boldsymbol\varphi\di \bb{y} \dd t + \int_{Q_T} \rho \bb{u} \cdot \partial_t \boldsymbol\varphi\di \bb{y} \dd t + \int_{Q_T} \rho \bb{u}\otimes \bb{u}:\nabla\boldsymbol\varphi\di \bb{y} \dd t +  \int_{Q_T} (\rho^\gamma+\delta\rho^a) \nabla\cdot\boldsymbol\varphi\di \bb{y} \dd t\\
    - \int_{Q_T} \mathbb{S}(\nabla \bb{u}):\nabla\boldsymbol\varphi\di \bb{y} \dd t-\delta \int_{Q_T} |\bb{u}|^2\bb{u}\cdot\boldsymbol\varphi\di \bb{y} \dd t-\varepsilon\int_{Q_T}\nabla \rho\otimes\boldsymbol\varphi:\nabla \bb{u}\di \bb{y} \dd t+ \frac\varepsilon2\int_{Q_T}(M-\rho)\bb{u}\cdot\boldsymbol\varphi \di \bb{y} \dd t\\
    -  \int_{\Gamma_T} \eta_t  \psi_t \di x \dd t-    \int_{\Gamma_T} \eta_{xx}   \psi_{xx}\di x \dd t -\int_{\Gamma_T}  \eta_{tx}  \psi_x\di x \dd t = -\int_{\Gamma_T}f\psi\di x \dd t-\int_{Q_T}\rho\bb{F}_\delta\cdot\boldsymbol\varphi\di \bb{y} \dd t, \label{coupledback:mom}
\end{multline}%%----------------------------%%
which holds for all $\boldsymbol\varphi \in C_{\#}^\infty(Q_T)$ and $\psi\in C_{\#,0}^\infty(\Gamma_T)$ such that $\boldsymbol\varphi (t,x,\hat{\eta}(t,x))=\psi(t,x)\bb{e}_2$ on $\Gamma_T$. Note however, that at this point, the problem is still not fully coupled since we cannot ensure that $\eta_t\bb{e}_2 = \gamma_{|\Gamma^\eta}\bb{u}$.

\subsection{Improved estimate of $\eta_{xx}$}\label{sec:improvedeta}
The following approach comes from \cite{MuhaSch}, where the improved regularity of displacement was obtained for the interaction problem between an incompressible viscous fluid and a nonlinear Koiter shell (see also \cite[Theorem 2.2]{Tr} for the compressible counterpart). We start with introducing the notation $D_h^s[\eta]$ defined as
\begin{equation*}%%%%%%%%%%%%%
    D_{h}^s[\eta](x):= \frac{\eta(t,x+h) - \eta(t,x)}{|h|^{s-1}h}, \quad s>0, h \in \R.
\end{equation*}%%----------------------------%%
The idea is to take $s < \frac14$ and test the coupled momentum equation \eqref{coupledback:mom} with a suitable test function to obtain an estimate on $\int_{\Gamma_T}|D_{h}^s[\eta_{xx}]|^2 \di x\dd t$ independent on $h < h_0$ for some $h_0 > 0$. The integration by parts formula for $D^s_h$ holds for periodic functions, i.e.
$$
\int_{\Gamma}D^s_h[u](x)v(x)\di x  = -\int_{\Gamma} u(x) D^s_{-h}[v](x)\di x
$$
for all periodic $u,v$ such that the integrals are finite. We set
\begin{equation*}
    \psi_h(t,x) = D^s_{-h}[D^s_h[\eta(t,x)]] - \frac{1}{|h|^{2s}}(\eta(t,-h)+\eta(t,h)) =: \psi_{1,h}(t,x) - \psi_{2,h}(t)
\end{equation*}
and use $(\psi_h\bb{e}_2,\psi_h)$ as a test function couple in \eqref{coupledback:mom} (note that this is an admissible test function because $\psi_h(t,0) = 0$). This gives rise to
\begin{eqnarray*}%%%%%%%%%%%%%
    -\int_{\Gamma_T} \eta_{xx} (\psi_h)_{xx}\di x\dd t = RHS,
\end{eqnarray*}%%----------------------------%%
so by taking into account that $(\psi_h)_{xx}=D^s_{-h}[D^s_h[\eta(t,x)_{xx}]]$ which implies
\begin{eqnarray*}%%%%%%%%%%%%%
     \int_{\Gamma_T}|D_h^s[\eta_{xx}(t,x)]|^2 \di x\dd t= -\int_{\Gamma_T} \eta_{xx} (\psi_h)_{xx}\di x\dd t,
\end{eqnarray*}%%----------------------------%%
the proof will follow once we show that RHS is bounded. \\

First, note that
\begin{eqnarray}%%%%%%%%%%%%%
    &&\|D^{s}_{-h}[D^s_h [\eta_t]]\|_{L^p(\Gamma)} \leq C \|\eta_{tx}\|_{L^2(\Gamma)}, \label{bound:Dh:1} \\
    &&\|D^{s}_{-h}[D^s_h [\eta_x]]\|_{L^p(\Gamma)} \leq C \|\eta_{xx}\|_{L^2(\Gamma)},\label{bound:Dh:2}
\end{eqnarray}%%----------------------------%%
for any $p>1$ and $s<\frac14$ by embedding theorems (see \cite[Proposition 2]{simon} and \cite[Proposition 4.6]{triebel}). Moreover, since $||\eta_{tx}||_{L^2(\Gamma_T)} \leq C(\delta)$, we get $\eta_t \in L^2(0,T;C^{\frac 12}(\Gamma))$ and thus
\begin{eqnarray*}%%%%%%%%%%%%%
    \frac{\eta_t(t,\pm h)}{|h|^{\frac 12}} = \frac{\eta_t(t,\pm h) - \eta_t(t,0)}{|h|^{\frac 12}} \in L^2(0,T)
\end{eqnarray*}%%----------------------------%%
with its $L^2$-norm bounded by $C(\delta)$. This means that for $s < \frac 14$ it holds $\psi_{2,t} \in L^2(0,T)$ and $||\psi_{2,t}||_{L^2(0,T)} \leq C(\delta)$. This combined with $\eqref{bound:Dh:1}$ implies
\begin{eqnarray}%%%%%%%%%%%%%
    \|D^{s}_{-h}[D^s_h [(\psi_h)_t]]\|_{L^2(0,T; L^p(\Gamma))} \leq C \|\eta_{tx}\|_{L^2(0,T;L^2(\Gamma))} \leq C(\delta) \label{bound:Dh:12},
\end{eqnarray}%%----------------------------%%
while $\eqref{bound:Dh:2}$ implies
\begin{eqnarray}%%%%%%%%%%%%%
     \|D^{s}_{-h}[D^s_h [(\psi_h)_{x}]]\|_{L^\infty(0,T; L^p(\Gamma))} \leq C \|\eta_{xx}\|_{L^\infty(0,T;L^2(\Gamma))} \leq C(\delta),\label{bound:Dh:22}
\end{eqnarray}%%----------------------------%%
for any $p>1$ and $s<\frac14$. Finally, since $||\eta_{xx}||_{L^\infty(0,T;L^2(\Gamma))} \leq C(\delta)$ a simple first order Taylor expansion of $\eta$ yields
$$
\psi_2(t) \leq C(\delta)|h|^{1-2s} \leq C(\delta),
$$
so
\begin{eqnarray}%%%%%%%%%%%%%
     \|D^{s}_{-h}[D^s_h [(\psi_h)]]\|_{L^\infty(\Gamma_T)} \leq C ( \|\eta_{xx}\|_{L^\infty(0,T;L^2(\Gamma))} + ||\psi_{2,h}||_{L^\infty}(0,T)) \leq C(\delta).\label{bound:Dh:32}
\end{eqnarray}%%----------------------------%%
Now, we are ready to show that the arising terms are bounded. First, the bounds of the terms involving time derivatives of $\psi_h$ are bounded as follows
\begin{eqnarray*}%%%%%%%%%%%%%
    \left|\int_{Q_T} \rho \bb{u}\cdot (\partial_t \psi_h \bb{e}_2)\di \bb{y} \dd t \right| \leq C||\rho||_{L^\infty(0,T; L^\gamma(\Omega))} ||\bb{u}||_{L^2(0,T; L^p(\Omega))}  \|D^{s}_{-h}[D^s_h [(\psi_h)_t]]\|_{L^2(0,T; L^p(\Gamma))} \leq C(\delta)
\end{eqnarray*}%%----------------------------%%
for $p=\frac{2\gamma}{\gamma-1}$ by $\eqref{bound:Dh:12}$, and
\begin{eqnarray*}%%%%%%%%%%%%%
    &&\delta\left|\int_{Q_T} \bb{u}\cdot (\partial_t \psi_h \bb{e}_2)\di \bb{y} \dd t \right| \leq  C\delta^{\frac34} ||\delta^{\frac14}\bb{u}||_{L^4(Q_T)}  \|D^{s}_{-h}[D^s_h [(\psi_h)_t]]\|_{L^2(\Gamma_T)} \leq C(\delta),\\
    &&\left|\int_{\Gamma_T}\eta_t (\psi_h)_t \di x \dd t \right| \leq || \eta_t||_{L^2(\Gamma_T)} \|D^{s}_{-h}[D^s_h [(\psi_h)_t]]\|_{L^2(\Gamma_T)} \leq C(\delta),
\end{eqnarray*}%%----------------------------%%
by $\eqref{bound:Dh:12}$ and uniform bounds. Next,  the pressure term vanishes since $\nabla\cdot (D^s_{-h}[D^s_h[\eta]](x) \bb{e}_2) = 0$. The remaining terms all include at most one spatial derivative on $\psi_h$. Let us bound only the most "difficult" terms:
\begin{eqnarray*}%%%%%%%%%%%%%
    \left|\varepsilon\int_{Q_T}\nabla \rho\otimes(\psi_h \bb{e}_2):\nabla \bb{u}\di \bb{y} \dd t \right| \leq \sqrt{\varepsilon} ||\sqrt{\varepsilon}\nabla \rho||_{L^2(Q_T)} ||\psi_h||_{L^\infty(\Gamma_T)} ||\nabla \bb{u}||_{L^2(Q_T)} \leq C(\delta)
\end{eqnarray*}%%----------------------------%%
by $\eqref{bound:Dh:32}$, and
\begin{eqnarray*}%%%%%%%%%%%%%
    \left| \int_{Q_T} \rho \bb{u}\otimes \bb{u}:\nabla\boldsymbol\varphi_h \di \bb{y} \dd t\right| \leq C||\rho||_{L^\infty(0,T; L^\gamma(\Omega))} ||\bb{u}||_{L^2(0,T; L^p(\Omega))}^2 ||(\psi_h)_x||_{L^\infty(0,T; L^p(\Omega))}
\end{eqnarray*}%%----------------------------%%
for $p=\frac{3\gamma}{\gamma-1}$, by $\eqref{bound:Dh:22}$. The remaining terms are bounded in a similar fashion, so we conclude
\begin{eqnarray*}%%%%%%%%%%%%%
    \int_{\Gamma_T}|D_{h}^s[\eta_{xx}]|^2 \leq C(\delta)
\end{eqnarray*}%%----------------------------%%
and as a direct consequence of imbedding and uniform bound on $\eta$ in $L^2(0,T; H^2(\Gamma))$, one finally obtains
\begin{eqnarray}\label{improved:eta}%%%%%%%%%%%%%
    || \eta||_{L^2(0,T; H^{2+s}(\Gamma))}\leq C(\delta)
\end{eqnarray}%%----------------------------%%
for any $s<\frac14$.

\section{Limit $\varepsilon\to 0$}
Denote the solutions obtained in previous section as $(\rho_\varepsilon,\bb{u}_\varepsilon,\eta_\varepsilon)$. The uniform bounds \eqref{uniform:est2} and \eqref{uniform:est1} give rise to the following weak convergencies
\begin{equation*}%%%%%%%%%%%%%
\begin{split}
    &\rho_\varepsilon \weak \rho \quad \text{weakly$^*$ in } L^\infty_\#(0,T;L^a_\#(\Omega)), \\
    &\bb{u}_\varepsilon \weak \bb{u} \quad \text{ weakly in }L^2_\#(0,T;H^1_\#(\Omega)), \\
    &\eta_\varepsilon \weak \eta \quad \text{weakly$^*$ in } L^\infty_\#(0,T;H^2_{\#}(\Gamma))\quad \text{and weakly in } H^1_\#(0,T;H^1_{\#,0}(\Gamma)).
\end{split}
\end{equation*}%%----------------------------%%
We pass to the limit in the equations \eqref{APP2:m2}, \eqref{coupledback:mom} and the energy inequality \eqref{EEtestfction_lim1}.

\subsection{Limit in the continuity equation}

We use nowadays standard arguments for the continuity equation to get $\rho_\ep \to \rho$ in $C_w([0,T];L^a(\Omega))$ and therefore $\rho_\ep\bb{u}_\ep \weak \rho\bb{u}$ weakly in $L^\infty(0,T;L^{\frac{2a}{a+1}}(\Omega))$. Moreover, due to \eqref{eq:epgradrho} and \eqref{uniform:est2} we have $\ep\nabla\rho_\ep \to 0$ in $L^2(Q_T)$. We conclude that the limiting functions $\rho$ and $\bb{u}$ satisfy the continuity equation in the weak sense, i.e.
\begin{equation*}%%%%%%%%%%%%%
     \int_{Q_T} \rho (\partial_t \varphi +\bb{u}\cdot \nabla \varphi)\di \bb{y} \dd t=0
\end{equation*}%%----------------------------%%
for all $\varphi \in C^{\infty}_\#(Q_T)$. Since $\rho\in L_\#^\infty(0,T; L_\#^a(\Omega))$ and $a\geq 2$ we further get that the renormalized continuity equation is satisfied by $\rho$ and $\bb{u}$, i.e.
\begin{equation*}%\label{reconteqweak:delta}%%%%%%%%%%%%%
     \int_{Q_T} \rho B(\rho)( \partial_t \varphi +\bb{u}\cdot \nabla \varphi)\di \bb{y} \dd t =\int_{Q_T} b(\rho)(\nabla\cdot \bb{u}) \varphi \di \bb{y} \dd t
\end{equation*}%%----------------------------%%
for all functions $\varphi \in C_\#^\infty(Q_T)$ and any $b\in L^\infty (0,\infty) \cap C[0,\infty)$ such that $b(0)=0$ with $B(\rho)=B(1)+\int_1^\rho \frac{b(z)}{z^2}dz$, see i.e. \cite[Section 11.19]{FeNobook}.

\subsection{Limit in the coupled momentum equation}

The limit in the equation \eqref{coupledback:mom} is more involved. The terms integrated over $\Gamma_T$ are linear and their limits are straightforward. Regarding the terms integrated over $Q_T$, we start similarly as in Section \ref{s:Momn}, deduce from the continuity equation that
\begin{equation}\label{eq:epgrad}
    \|\ep \nabla \rho_\ep \|_{L^{\frac{20}{9}}(Q_T)} \leq C(\delta)
\end{equation}
and we use this information to estimate
\begin{equation*}
    \|\partial_t((\delta+\rho_\ep)\bb{u}_\ep) \|_{(L^{20}_\#(0,T,W^{2,p}_\#(\Omega)))^*} \leq C(\delta).
\end{equation*}
The continuity equation implies a similar estimate for the time derivative of the density, namely
\begin{equation*}
    \|\partial_t\rho_\ep \|_{(L^{\frac{10}{3}}_\#(0,T,W^{1,2}_\#(\Omega)))^*} \leq C(\delta).
\end{equation*}
Using this information and the fact that the sequence of velocities is bounded in $L^4(Q_T)$ we get in particular that
\begin{equation*}
    \left|\int_{Q_T} \partial_t\rho_\ep \bb{u}_\ep\cdot \boldsymbol\varphi\di \bb{y} \dd t\right| \leq C(\delta)
\end{equation*}
for any $\boldsymbol\varphi \in L^{20}_\#(0,T,W^{2,p}_\#(\Omega))$. Therefore we obtain
\begin{multline*}
    \delta \|\partial_t\bb{u}_\ep \|_{(L^{20}_\#(0,T,W^{2,p}_\#(\Omega)))^*} \leq \|(\delta+\rho_\ep) \partial_t\bb{u}_\ep \|_{(L^{20}_\#(0,T,W^{2,p}_\#(\Omega)))^*} \\
    \leq \|\partial_t((\delta+\rho_\ep)\bb{u}_\ep) \|_{(L^{20}_\#(0,T,W^{2,p}_\#(\Omega)))^*} + \|\bb{u}_\ep\partial_t\rho_\ep \|_{(L^{20}_\#(0,T,W^{2,p}_\#(\Omega)))^*} \leq    
    C(\delta).
\end{multline*}
This bound together with the Aubin-Lions lemma is enough to pass to the limit in the term $\delta \int_{Q_T} |\bb{u}|^2\bb{u}\cdot\boldsymbol\varphi \di \bb{y} \dd t$. We also obtain similar convergences as in \eqref{eq:strongnegative} and \eqref{eq:convtermlimit}, where we combine the latter with the fact that 
\begin{equation*}
    \bb{u}_\ep \otimes \bb{u}_\ep \to \bb{u}\otimes\bb{u} \quad \text{ in } L^p(Q_T) \text{ for some } p > 1
\end{equation*}
to pass to the limit in the convective term. 

The only remaining term without properly identified limit is the pressure term. Regarding this term, we first observe that when deriving \eqref{eq:Bog_epsilon}, we proved that $\rho_\varepsilon^a$ has a better than $L^1$ integrability in the interior of the domain $Q_T^\eta$. However, it is still possible that $\{\rho_\varepsilon\}_{\varepsilon>0}$ might generate some concentrations near the elastic boundary. We define
\begin{equation*}%%%%%%%%%%%%%
    \varphi_h^\varepsilon(t,x,z):=\begin{cases}
    \frac{z-\eta_\varepsilon(t,x)}{h}, &\quad \text{ for } \eta_\varepsilon(t,x)<z<\eta_\varepsilon(t,x)+h,\\
    -\frac{1}{H-h}(z-(\eta_\varepsilon(t,x)+h))+1, &\quad \text{ for } \eta_\varepsilon(t,x)+h<z<\eta_\varepsilon(t,x)+{2H}-h,\\
    \frac{z-(\eta_\varepsilon(t,x)+2H)}{h}, &\quad \text{ for } \eta_\varepsilon(t,x)+{2H}-h<z<\eta_\varepsilon(t,x)+{2H}.
    \end{cases}
\end{equation*}%%----------------------------%%
We choose $\boldsymbol\varphi=\varphi_h^\varepsilon\bb{e}_2$ in \eqref{coupledback:mom} (with $\psi=0$) and we compute similarly as in \eqref{bndest} to get
\begin{equation}\label{eq:equiintep}
    \int_0^T \int_{\{\eta<z<\eta+h\}\cup\{\eta+2H-h<z<\eta+2H\}} (\rho_\varepsilon^{\gamma}+\delta\rho_\varepsilon^a)\di \bb{y} \dd t \leq C(\delta) h^s,
\end{equation}%%----------------------------%%
for some $s>0$. Indeed, to obtain this kind of estimate it is enough to observe that all arising terms have better than $L^1$ integrability in the space variable. Here we in particular use once again \eqref{eq:epgrad}.

Estimate \eqref{eq:equiintep} means that the sequence $\{\rho_\varepsilon^{\gamma}+\delta\rho_\varepsilon^a\}_{\varepsilon>0}$ is uniformly integrable so there exists its weak limit in $L^1(Q_T)$ denoted as $\overline{p_\delta(\rho)}$. In order to identify $\overline{p_\delta(\rho)}$, one can use the standard approach on compact subsets of $Q_T^\eta$ based on convergence of effective viscous flux, renormalized continuity equation and monotonicity argument (see \cite{FeNobook}) in order to conclude that 
\begin{equation*}%\label{strong:conv:rho}%%%%%%%%%%%%%
    \rho_\varepsilon \to \rho, \quad \text{a.e. in } Q_T.
\end{equation*}%%----------------------------%%
This is enough to identify $\overline{p_\delta(\rho)}$ as $\rho^{\gamma}+\delta\rho^a$. 

Finally, let us point out that the kinematic coupling $\partial_t\eta\bb{e}_2 = \gamma_{|\Gamma^\eta}\bb{u}$ is recovered due to the bound \eqref{uniform:est1}. We have proved that the limit functions $(\rho,\bb{u},\eta)$ satisfy
 \begin{multline}%%%%%%%%%%%%%
     \int_{Q_T}(\delta + \rho) \bb{u} \cdot\partial_t \boldsymbol\varphi\di \bb{y} \dd t + \int_{Q_T}(\rho \bb{u} \otimes \bb{u}):\nabla\boldsymbol\varphi\di \bb{y} \dd t +\int_{Q_T} (\rho^\gamma+\delta\rho^a) (\nabla \cdot \boldsymbol\varphi)\di \bb{y} \dd t\\
     -  \int_{Q_T} \mathbb{S}( \nabla\bb{u}): \nabla \boldsymbol\varphi\di \bb{y} \dd t
     +\delta\int_{Q_T}|\bb{u}|^2\bb{u}\cdot\boldsymbol\varphi\di \bb{y} \dd t+\int_{\Gamma_T} \eta_t \psi_t\di x \dd t - \int_{\Gamma_T}\eta_{xx} \psi_{xx}\di x \dd t- \bint_{\Gamma_T} \eta_{tx} \psi_x\di x \dd t\\
     =  -\int_{\Gamma_T} f\psi\di x \dd t - \int_{Q_T} \rho \bb{F}_\delta\cdot \boldsymbol\varphi\di \bb{y} \dd t \label{momeqweak:delta}
 \end{multline}%%----------------------------%%
for all $\boldsymbol\varphi \in C_{\#}^\infty(Q_T)$ and $\psi\in C^\infty_{\#,0}(\Gamma_T)$ such that $\boldsymbol\varphi (t,x,\hat{\eta}(t,x))=\psi(t,x)\bb{e}_2$ on $\Gamma_T$.

\subsection{Limit in the energy inequality}
Our aim here is to pass to the limit in \eqref{EEtestfction_lim1}, where $\phi \in C^\infty_\#(0,T)$, $\phi \geq 0$. First, it is easy to pass to the limit on the right hand side, in particular the last two terms converge to zero. On the left hand side we simply discard the penalization term 
\begin{equation*}
\frac1{\varepsilon}\int_{\Gamma_T} \phi|\bb{v}_\ep - (\eta_\ep)_t \bb{e}_2|^2\di x \dd t,    
\end{equation*}
because it is obviously non-negative. We apply the same argument for the terms
\begin{equation*}
    \varepsilon\gamma\int_{Q_T} \phi\rho_\ep^{\gamma-2}|\nabla \rho_\ep|^2\di \bb{y} \dd t + \varepsilon\delta a\int_{Q_T} \phi\rho_\ep^{a-2}|\nabla \rho_\ep|^2\di \bb{y} \dd t.
\end{equation*}
The uniform bounds \eqref{uniform:est2} and \eqref{Edeltadef} imply that
\begin{equation*}
    \frac{\varepsilon\gamma}{\gamma-1} \int_{Q_T}\phi\rho_\ep^{\gamma}\di \bb{y} \dd t + \frac{\varepsilon\delta a}{a-1} \int_{Q_T}\phi\rho_\ep^{a} \to 0\di \bb{y} \dd t.
\end{equation*}
Next, we use the weak lower semicontinuity of convex functions to pass to the limit in the terms
\begin{equation*}
    \int_{Q_T} \phi\mathbb{S}(\nabla \bb{u}_\ep):\nabla \bb{u}_\ep \di \bb{y} \dd t +\delta \int_{Q_T}\phi|\bb{u}_\ep|^4\di \bb{y} \dd t+\int_{\Gamma_T}\phi|(\eta_\ep)_{tx} |^2 \di x \dd t.
\end{equation*}
It remains to identify the limit of the first term in \eqref{EEtestfction_lim1}, namely
\begin{multline*}
    \int_0^T\phi_t(t) E_\delta(t)\di t  \\
    = \int_{Q_T}\left( \frac12\rho_\ep|\bb{u}_\ep|^2+\frac{\delta}{2}|\bb{u}_\ep|^2+ \frac{1}{\gamma-1}\rho_\ep^\gamma+\frac{\delta}{a-1}\rho_\ep^a\right)\phi_t\di \bb{y} \dd t+\int_{\Gamma_T}\left( \frac12|(\eta_\ep)_{t}|^2+\frac12|(\eta_\ep)_{xx}|^2\right)\phi_t\di x \dd t.
\end{multline*}
We use the same arguments as when passing to the limit in the convective term in the coupled momentum equation to obtain
\begin{equation}%%%%%%%%%%%%%
    \frac 12\int_{Q_T}(\delta|\bb{u}_\varepsilon|^2+ \rho_\ep|\bb{u}_\varepsilon|^2)\phi_t(t)\di \bb{y} \dd t\to \frac 12\int_{Q_T}(\delta|\bb{u}|^2+\rho|\bb{u}|^2)\phi_t(t)\di \bb{y} \dd t. \label{kin:en:conv:delta}
\end{equation}%%----------------------------%%
Moreover, the a.e. convergence of $\{\rho_\varepsilon\}_{\varepsilon>0}$ and equiintegrability of $\{\rho_\varepsilon^a\}_{\varepsilon>0}$, imply
\begin{equation*}
\int_{Q_T}\left(\frac{1}{\gamma-1}\rho_\varepsilon^{\gamma}+\frac{\delta}{a-1}\rho_\varepsilon^a\right)\phi_t(t)\di \bb{y} \dd t\to \int_{Q_T}\left(\frac{1}{\gamma-1}\rho^{\gamma}+\frac{\delta}{a-1}\rho^a\right)\phi_t(t)\di \bb{y} \dd t.    
\end{equation*}
The bound on $\partial_{tx}\eta_{\varepsilon}$ in $L^2(\Gamma_T)$ and \eqref{improved:eta} imply $\partial_{xx} \eta_\varepsilon \to \partial_{xx} \eta$ strongly in $L^2(\Gamma_T)$ so 
\begin{equation*}
\frac 12 \int_{\Gamma_T} |\partial_{xx}\eta_\varepsilon|^2\phi_t(t)\di x \dd t \to \frac 12\int_{\Gamma_T}|\partial_{xx}\eta|^2\phi_t(t)\di x \dd t.    
\end{equation*}
It only remains to prove the convergence of the term involving the square of the time derivative of $\eta_\ep$. First, we choose $(\boldsymbol\varphi,\psi) = (\eta_\varepsilon\bb{e}_2,\eta_\varepsilon)$ in \eqref{coupledback:mom} and $(\boldsymbol\varphi,\psi) = (\eta\bb{e}_2,\eta)$ in \eqref{momeqweak:delta} and we compare the two identities to conclude that
\begin{equation}%%%%%%%%%%%%%
    \int_{Q_T}(\delta+\rho_\varepsilon)\bb{u}_\varepsilon \cdot \partial_t \eta_\varepsilon \bb{e}_2\di \bb{y} \dd t + \int_{\Gamma_T} |\partial_t \eta_\varepsilon|^2\di x \dd t \to \int_{Q_T}(\delta+\rho)\bb{u} \cdot \partial_t \eta \bb{e}_2\di \bb{y} \dd t + \int_{\Gamma_T} |\partial_t \eta|^2\di x \dd t. \label{etat:strong:delta1}
\end{equation}%%----------------------------%%
Moreover, the strong convergence of $(\delta+\rho_\varepsilon)\bb{u}_\varepsilon\to (\delta+\rho)\bb{u}$ in $L^2(0,T; H^{-\frac12}(\Omega^{\eta}(t)))$ and the weak convergence of $\bb{u}_\varepsilon - \text{Ext}[\bb{v}_\varepsilon]$ to $\bb{u} - \eta_t \bb{e}_2$ in $L^2(0,T; H_0^{\frac12}(\Omega^\eta(t)))$ where $\text{Ext}[\bb{v}_\varepsilon](t,x,z) = \bb{v}_\varepsilon(t,x)$ imply
\begin{multline}%%%%%%%%%%%%%
   \int_{Q_T}(\delta+\rho_\varepsilon)\bb{u}_\varepsilon \cdot \big(\bb{u}_\varepsilon - \partial_t \eta_\varepsilon\bb{e}_2\big)\di \bb{y} \dd t \\
   = \int_{Q_T}(\delta+\rho_\varepsilon)\bb{u}_\varepsilon \cdot (\bb{u}_\varepsilon - \text{Ext}[\bb{v}_\varepsilon])\di \bb{y} \dd t + \int_{Q_T}(\delta+\rho_\varepsilon)\bb{u}_\varepsilon \cdot \underbrace{(\text{Ext}[\bb{v}_\varepsilon]-\partial_t \eta_\varepsilon\bb{e}_2)}_{\to 0} \di \bb{y} \dd t \\
   \to \int_{Q_T}(\delta+\rho)\bb{u}\cdot (\bb{u} - \eta_t\bb{e}_2)\di \bb{y} \dd t. \label{etat:strong:delta2}
\end{multline}%%----------------------------%%
We sum up \eqref{etat:strong:delta1} and \eqref{etat:strong:delta2} and by \eqref{kin:en:conv:delta}  we deduce
\begin{equation*}%%%%%%%%%%%%%
    \frac 12 \int_{\Gamma_T}|\partial_t \eta_\varepsilon|^2\phi_t(t)\di x \dd t \to \frac 12 \int_{\Gamma_T}|\partial_t \eta|^2\phi_t(t)\di x \dd t. %\label{etat:strong:delta}
\end{equation*}%%----------------------------%%
Thus,  $(\rho,\bb{u},\eta)$ satisfies
\begin{multline}%%%%%%%%%%%%%
     - \int_0^T\phi_t(t) E_\delta(t)\di t +\int_{Q_T} \phi\mathbb{S}(\nabla \bb{u}):\nabla \bb{u}\di \bb{y} \dd t+\delta \int_{Q_T}\phi|\bb{u}|^4\di \bb{y} \dd t+\int_{\Gamma_T}\phi|\eta_{tx} |^2\di x \dd t  \\
      \qquad \leq \int_{\Gamma_T} \phi f\eta_t\di x \dd t  + \int_{Q_T}\phi \rho\bb{u}\cdot\bb{F}_\delta \di \bb{y} \dd t\label{EEtestfction_lim2}
\end{multline}%%----------------------------%%
for all $\phi \in C^\infty_\#(0,T)$, $\phi \geq 0$.%, where 
%\begin{eqnarray}\label{eq:Edelta73}%%%%%%%%%%%%%
%    E_{\delta}(t) = \int_{\Omega}\left( \frac12\rho|\bb{u}|^2+\frac{\delta}{2}|\bb{u}|^2+ \frac{1}{\gamma-1}\rho^\gamma+\frac{\delta}{a-1}\rho^a\right)(t)+\int_\Gamma\left( \frac12|\eta_{t}|^2+\frac12|\eta_{xx}|^2\right)(t).
%\end{eqnarray}%%----------------------------%%

\subsection{Estimates independent of $\delta$}\label{sec:deltaest}
At this point, one can adjust the calculations from Section \ref{aprioriest} to take into account terms with $\delta$ in \eqref{momeqweak:delta} in order to deduce estimates independent of $\delta$. We only list main changes with respect to Section \ref{aprioriest} here. The starting point is the energy inequality \eqref{EEtestfction_lim2}, where we first use test function $\phi = 1$ and follow Section \ref{partI} to get
\begin{equation}\label{diss:est2}%%%%%%%%%%%%%
   \delta\|\bb{u}\|_{L^4(Q_T)}^4  + \| \bb{u}\|_{L^2(0,T;H^1(\Omega))}^2+
    \|\eta_t \|_{L^2(0,T;H^1(\Gamma))}^2 \leq C(\kappa)(1+\mathcal{E}_\delta^\kappa).
\end{equation}%%----------------------------%%
Next, using the notation for $E_\delta(t)$ and $\mathcal{E}_\delta$ introduced in \eqref{Edelta.def} and \eqref{Edeltadef} respectively, we take a sequence of test functions $\phi_k \to \chi_{[s,t]}$, pass to the limit with $k \to \infty$ and using calculations of Section \ref{partII} we get \begin{equation*}%\label{eq:CIRC12}%%%%%%%%%%%%%
  \mathcal{E}_\delta \leq C_0\left(1+\int_0^T E_\delta(s)\di s\right).
\end{equation*}%%----------------------------%%
% First, we replace $E(t)$ with $E_\delta(t)$ defined in \eqref{Edelta.def} and denote $\mathcal{E}_\delta := \sup_{(0,T)} E_\delta(t)$. The estimates of Section \ref{partI} rely on the renormalized continuity equation \eqref{reconteqweak:delta} with $b(\rho) = \rho^\gamma + \delta\rho^a$ and the resulting equation will include also the integral $\int_{Q_T} \delta |\bb{u}|^4$. Therefore %instead of \eqref{diss:est} 
% we get
% We repeat the calculations in Section \ref{partII} to get
 All terms are handled similarly to their counterparts in  Section \ref{eta:est:sec}, there are however two additional terms with respect to \eqref{eq:est_etaxx1}. These are treated as follows
\begin{multline*}%%%%%%%%%%%%%
  \delta\left|\int_{Q_T} |\bb{u}|^2\bb{u}\cdot{\eta\bb{e}_2}\di \bb{y} \dd t\right| \leq \delta\|\bb{u}\|_{L^4(Q_T)}^3\|\eta\|_{L^4(\Gamma_T)} \\
   \leq C(\kappa)(1+\mathcal{E}_\delta^{\frac{3\kappa}{4}})(\|\eta_t\|_{L^2(0,T;L^4(\Gamma))} + \|\eta\|_{L^2(0,T;L^4(\Gamma))}) \leq C(\kappa)(1+\mathcal{E}_\delta^{\frac{3\kappa}{2}}) + \frac{1}{8}\|\eta_{xx}\|_{L^2(\Gamma_T)}^2,
\end{multline*}%%----------------------------%%
and
\begin{equation*}
    \delta\left|\int_{Q_T} \bb{u} \cdot \eta_t  \bb{e}_2\di \bb{y} \dd t\right| \leq C\|\bb{u}\|_{L^2(0,T;L^q(\Omega^\eta(t)))} \|  \eta_t\|_{L^2(0,T;L^\infty(\Gamma))}\leq C(\kappa)\left(1+\mathcal{E}_\delta^{\kappa}\right).
\end{equation*}
Eventually we recover
\begin{equation*}%\label{eq:int_est_final_final}%%%%%%%%%%%%%
    \int_{\Gamma_T}| \eta_{xx}  |^2\di x \dd t \leq  C(\kappa)(1+\mathcal{E}_\delta^{3\kappa}).
\end{equation*}%%----------------------------%%
Finally, %in Section \ref{Bog:section}, 
 \eqref{eq:mainBog} contain the additional term $\delta\int_{Q_T^\eta}\rho^{a+\alpha} \di \bb{y} \dd t$ on the left hand side and four more terms on the right hand side. Two terms arise from the $\delta\rho^a$ in the pressure and these terms are estimated exactly as in \eqref{eq:Bogest11} and \eqref{eq:Bogest12}. Next, similarly as in \eqref{eq:Bogest13}
\begin{equation*}%%%%%%%%%%%%%
    \delta\left|\int_{Q_T^\eta} \bb{u}\cdot \partial_t \boldsymbol\varphi_h\di \bb{y} \dd t\right| \leq C(\kappa)\left(1+ \mathcal{E}_\delta^{\frac{\alpha}{\gamma}+\kappa}\right),
\end{equation*}%%----------------------------%%
and 
\begin{equation*}%%%%%%%%%%%%%
    \delta\left|\int_{Q_T^\eta} |\bb{u}|^2\bb{u}\cdot \boldsymbol\varphi_h\di \bb{y} \dd t\right| \leq C(1+  \mathcal{E}_\delta^{\frac34})
\end{equation*}%%----------------------------%%
We then continue as in Section \ref{Bog:section} and end with \eqref{improved:int:est} and thanks to the choice of parameters $\alpha, \kappa$ we get \eqref{interest}. We want a similar bound also for $\delta\rho^a$, however we can not use the same combination of parameters $\alpha$ and $\kappa$, because the inequality \eqref{eps:epsprime} might not hold if $\gamma$ is replaced by $a$. Therefore, we next set $\bar{\kappa} := \frac{1}{5(a-1)}$ and $\bar{\alpha} := \frac 25$, repeat the calculations of Sections \ref{partI}-\ref{eta:est:sec} and Section \ref{Bog:section} in order to deduce
\begin{equation*}%\label{improved:int:est2}%%%%%%%%%%%%%
    \delta\int_0^T \int_{\{\eta+h<z<\eta+2H-h\}} \rho^{a+\bar{\alpha}}\di \bb{y} \dd t\leq  \delta\int_{Q_T^\eta} \rho^{a+\bar{\alpha}}\phi_h\di \bb{y} \dd t \leq C(\bar{\kappa})\left(1+\mathcal{E}_\delta^{1+\frac{3\bar{\kappa}}2}\right).
\end{equation*}%%----------------------------%%
By interpolation 
\begin{equation}%%%%%%%%%%%%%
    \delta\int_0^T \int_{\{\eta+h<z<\eta+2H-h\}} \rho^{a}\di \bb{y} \dd t \leq C(\bar{\kappa})\left(1+\mathcal{E_\delta}^{1-\bar{\kappa}'}\right)\label{interest2}, 
\end{equation}%%----------------------------%%
where
\begin{equation*}%\label{epsprimedef}
    \bar{\kappa}':= 1 - \left(1+\frac{3\bar{\kappa}}2\right)\frac{a-1}{a+\bar{\alpha}-1}.
\end{equation*}
We continue with estimates of the pressure near the boundary using the function \eqref{eq:varphi_h_def2}. Again, we encounter some additional terms in equation \eqref{eq:Bogbdry}. To be more precise, terms $\delta\rho^a$ appear both on the left hand side and in the first term on the right hand side. The left hand side provides the information we seek, while the term on the right hand side is bounded using \eqref{interest2}. The integrals of $\delta\bb{u}\cdot\partial_t(\varphi_h\bb{e}_2)$ and $\delta|\bb{u}|^2\bb{u}\cdot(\varphi_h\bb{e}_2)$ yield the powers $\mathcal{E}_\delta^\kappa$ and $\mathcal{E}_\delta^{\frac 34}$, respectively. Hence, we conclude that there exists $\kappa'' > 0$ such that 
\begin{equation*}%%%%%%%%%%%%%
    \int_{Q_T} \rho^{\gamma} + \delta\rho^a \di \bb{y} \dd t\leq C\left(1+\mathcal{E}^{1-\kappa''}\right).
\end{equation*}%%----------------------------%%
Finally, in Section \ref{35sec} we estimate $\delta\int_{Q_T} |\bb{u}|^2$ by \eqref{diss:est2} and we obtain 
\begin{equation}\label{eq:finalbounds}%%%%%%%%%%%%%
    \mathcal{E}_\delta\leq C, \qquad \int_{Q_T} \mathbb{S}(\nabla \bb{u}):\nabla \bb{u}\di \bb{y} \dd t+\delta \int_{Q_T}|\bb{u}|^4\di \bb{y} \dd t+\int_{\Gamma_T}|\eta_{tx} |^2 \di x \dd t\leq C.  
\end{equation}%%----------------------------%%
Similarly to Section \ref{sec:improvedeta}, we obtain
\begin{equation}\label{eq:improveddelta}%%%%%%%%%%%%%
    \|\eta\|_{L^2(0,T; H^{2+s}(\Gamma))}^2\leq C,
\end{equation}%%----------------------------%%
for some $s>0$.

\section{Limit $\delta\to 0$}
Denote the solution obtained in previous section as $(\rho_\delta,\bb{u}_\delta,\eta_\delta)$. The goal is to pass to the limit $\delta\to 0$ to conclude that the limiting functions $(\rho,\bb{u},\eta)$ represent a weak solution in the sense of Definition \ref{weak:sol:def}. The uniform estimates deduced in Section \ref{sec:deltaest} give rise to the following convergencies
\begin{equation*}%%%%%%%%%%%%%
\begin{split}
    \rho_\delta &\weak \rho \quad \text{ weakly}^* \text{ in } L^\infty_\#(0,T;L^\gamma_\#(\Omega)), \\
    \bb{u}_\delta &\weak \bb{u} \quad\text{ weakly in } L^2_\#(0,T;H^1_\#(\Omega)), \\
    \eta_\delta &\weak \eta \quad \text{ weakly}^* \text{ in } L^\infty_\#(0,T;H^2_{\#}(\Gamma)) \quad \text{ and weakly in } H^1_\#(0,T;H^1_{\#,0}(\Gamma)).
\end{split}
\end{equation*}%%----------------------------%%

\subsection{Limit in the continuity equation}

We employ standard arguments from the existence theory of weak solutions to the compressible Navier-Stokes equations (see i.e. \cite{FeNobook}) to deduce that functions $\rho$ and $\bb{u}$ satisfy the continuity equation in the weak sense, i.e.
\begin{equation*}%%%%%%%%%%%%%
     \int_{Q_T} \rho (\partial_t \varphi +\bb{u}\cdot \nabla \varphi)\di x \dd t=0
\end{equation*}%%----------------------------%%
for all $\varphi \in C^{\infty}_\#(Q_T)$. The validity of the renormalized continuity equation remains open at this moment since $\rho$ may not possess enough regularity to use a direct argument.

\subsection{Limit in the coupled momentum equation}

First, the kinematic coupling $\bb{u}(t,x,\hat{\eta}(t,x)) = \eta_t(t,x)\bb{e}_2$ is recovered using Lemma \ref{tracelemma}. Our aim is to pass with $\delta$ to zero in \eqref{momeqweak:delta}. Once again, the terms integrated over $\Gamma_T$ are linear and therefore their limits are straightforward. Estimates \eqref{eq:finalbounds} are enough to identify $0$ as a limit of terms $\int_{Q_T} \delta\bb{u}\cdot\partial_t\boldsymbol\varphi \di \bb{y} \dd t$ and $\int_{Q_T} \delta|\bb{u}|^2\bb{u}\cdot\boldsymbol\varphi\di \bb{y} \dd t$. The limit in the last term on the right hand side is easy. In the remaining terms we follow the existence theory of weak solutions to the compressible Navier-Stokes equations and the main task is to deduce the limit in the pressure term, which is closely related to the validity of the renormalized continuity equation. Both issues are solved by means of the effective viscous flux identity and boundedness of the oscillations defect measure. We get the pointwise convergence of $\rho_\delta \to \rho$ a.e. in $Q_T$ and thus recover both \eqref{reconteqweak} and \eqref{momeqweak}.

\subsection{Limit in the energy inequality}

Finally we need to pass to the limit in \eqref{EEtestfction_lim2} in order to prove \eqref{eq:EE1}. The limits of the terms on the right hand side are simple. On the left hand side we simply discard the term $\delta\int_{Q_T} \phi |\bb{u}|^4$ since it is surely nonnegative and for the second and fourth term on the left hand since we use lower semicontinuity of convex functions. Therefore it remains to deal with the first term on the left hand side. First, the kinetic energy term is treated the same way as the convective term in the coupled momentum equation. Next, it is easy to use \eqref{eq:finalbounds} to pass to zero in the term containing $\delta|\bb{u}|^2$. Pointwise convergence of densities allows us to pass to the limit in the pressure terms of $E_\delta$. Improved estimate \eqref{eq:improveddelta} allows us to pass to the limit in the last term of $E_\delta$, while a similar procedure as in \eqref{etat:strong:delta1}-\eqref{etat:strong:delta2} provides necessary information to pass to the limit in the term $|\eta_t|^2$ of $E_\delta$. Thus we recover \eqref{eq:EE1}.
% \begin{equation*}%%%%%%%%%%%%%
%      - \int_0^T\phi_t(t) E(t)\di t +\int_{Q_T} \phi\mathbb{S}(\nabla \bb{u}):\nabla \bb{u}\di \bb{y} \dd t+\int_{\Gamma_T}\phi|\eta_{tx} |^2\di x \dd t \leq \int_{\Gamma_T} \phi f\eta_t\di x \dd t  + \int_{Q_T}\phi \rho\bb{u}\cdot\bb{F}\di \bb{y} \dd t %\label{EEtestfction_limfinal}
% \end{equation*}%%----------------------------%%
% for all $\phi \in C^\infty_\#(0,T)$, $\phi \geq 0$, where $E(t)$ is defined by \eqref{energydef}.
%\begin{equation*}%\label{eq:Edeffinal}%%%%%%%%%%%%%
%    E(t) = \int_{\Omega}\left( \frac12\rho|\bb{u}|^2+ \frac{1}{\gamma-1}\rho^\gamma\right)(t)\di x+\int_\Gamma\left( \frac12|\eta_{t}|^2+\frac12|\eta_{xx}|^2\right)(t)\di x.
%\end{equation*}%%----------------------------%%
The validity of \eqref{eq:EEbound} follows from the calculations in Section \ref{aprioriest} with the starting point being the energy inequality \eqref{eq:EE1}.

\vspace{.1in}
\noindent{\bf Acknowledgments:} The work of O. K., V. M. and \v S. N. was supported by Praemium Academiae of \v S. Ne\v casov\' a and by the Czech Science Foundation (GA\v CR) through project GA22-01591S. The Institute of Mathematics, Czech Academy of Sciences, is supported by RVO:67985840.

\Addresses

\end{document}